\documentclass[12pt]{amsart}

\usepackage{newtxtext}
\usepackage{newtxmath}
\usepackage{amsmath,amsthm,amsfonts,amscd}

\usepackage[a4paper, total={7in, 9 in}]{geometry}
\usepackage{epic, eepic}
\usepackage{graphicx}
\usepackage{multicol}
\usepackage{wrapfig}
\usepackage{textpos}
\usepackage{graphics}
\usepackage{color}
\usepackage{xcolor}
\usepackage{epsfig}
\usepackage{float}
\usepackage{multirow}

\usepackage{array}

\usepackage{tabularx}

\usepackage{makecell}
\usepackage{threeparttable}
\usepackage{ragged2e}

\usepackage{caption}

\usepackage{mathtools}

\usepackage{hyperref}

\newcounter{newcounter}[section]
\numberwithin{equation}{section}
\numberwithin{newcounter}{section}
\numberwithin{figure}{section}
\numberwithin{footnote}{section}

\newtheorem{theo}[newcounter]{Theorem}
\newtheorem{defi}[newcounter]{Definition}
\newtheorem{pro}[newcounter]{Proposition}
\newtheorem{lem}[newcounter]{Lemma}
\newtheorem{corollary}[newcounter]{Corollary}
\newtheorem{remark}[newcounter]{Remark}
\newtheorem{example}[newcounter]{Example}

\newcommand{\R}{{\mathbb R}}
\newcommand{\Z}{{\mathbb Z}}

\newcommand{\C}{{\mathcal{C}}}

\newcommand{\N}{{\mathcal{N}}}

\newcommand{\s}{{\mathbb{S}^1}}

\newcommand{\mz}{\textrm{mod}\,\Z}

\def\skipp#1{}

\usepackage[backend=biber, style=numeric]{biblatex}
\defbibheading{bibliography}[\bibname]{%
  \vspace*{1pt}%
  \begin{center}%
  {\scshape #1}%
  \end{center}%
  \vspace*{4pt}%
  \addcontentsline{toc}{section}{#1}%
}

\begin{document}
%\bibliographystyle{plain}

%\address{IMPA}
%\email{demelo@impa.br}
%author{Edson Vargas}
%\address{University of  S\~{a}o Paulo, Brazil}
%\email{vargas@ime.usp.br}

\title{Blaschke-Type Models for Multimodal Circle Maps}

\author{Edson de Faria}
\address{Edson de Faria - Instituto de Matem\'atica e Estat\'istica, Universidade de S\~ao Paulo}
\email{edson@ime.usp.br}

\author{Welington de Melo$^\dagger$}
\thanks{$\dagger$Welington de Melo passed away in 2016, before the completion of this paper. The present work grew out of ideas developed in collaboration with him, and the final manuscript was completed by the surviving authors.}
\address{Welington de Melo - Instituto de Matem\'atica Pura e Aplicada}
\email{dac@impa.br}

\author{Pedro A. S. Salom\~ao}
\address{Pedro A. S. Salom\~ao - Shenzhen International Center for Mathematics, SUSTech}
\email{psalomao@sustech.edu.cn}

\author{Edson Vargas}
\address{Edson Vargas - Instituto de Matem\'atica e Estat\'istica, Universidade de S\~ao Paulo}
\email{vargas@ime.usp.br}

\dedicatory{Dedicated to the memory of Welington de Melo (1946--2016)}

%\begingroup
%\renewcommand\thefootnote{$\dagger$}
%\footnotemark
%\footnotetext{Welington de Melo passed away in 2016, before the completion of this paper. The present work grew out of ideas developed in collaboration with him, and the final manuscript was completed by the surviving authors.}
%\endgroup

\begin{abstract}
For each integer $m \geq 1$, we construct a finite-dimensional family of rational maps, given by Blaschke-type products, whose restriction to the unit circle consists of $2m$-multimodal maps. We show that every post-critically finite $2m$-multimodal circle map satisfying natural dynamical conditions is topologically conjugate to a map in this family.  Moreover, we prove that this realization is unique up to rotation: two maps in the family that are topologically conjugate on the circle differ by a rigid rotation. In particular, the family provides a canonical model realizing all post-critically finite combinatorics in this class. The proofs combine a detailed description of the
critical geometry of these Blaschke-type maps with a Thurston-type fixed point
argument for a pull-back operator on the parameter space.

\end{abstract}

\maketitle

\tableofcontents

\section{Introduction}
A central goal in dynamical systems is to understand the global behavior of a system from qualitative or combinatorial data. Even for low-dimensional systems, this goal seems rather ambitious. For example, the maps in the Hénon family $H_{ab}(x,y)=(1-ax^2+y, bx)$ introduced in \cite{henon1} and \cite{henon2} are still not well understood, despite being the object of intense research in recent years, see \cite{benedicks}. But for systems acting in one-dimensional phase spaces, there is a long history of successful results, dating back to 1885, when Poincaré introduced the concept of {\em rotation number}. This single invariant is sufficient to describe combinatorial aspects of the dynamics of homeomorphisms of the circle, leading to their topological classification. The existence of periodic points is equivalent to the rotation number being rational, and a homeomorphism with irrational rotation number is semi-conjugate to the rigid rotation given by this number. In 1932, Denjoy proved that, for sufficiently smooth diffeomorphisms, this semi-conjugacy is indeed a conjugacy. In 1977, Milnor and Thurston \cite{milnor-thurston} (see also \cite{parry} and \cite{metropolis}) developed a combinatorial theory, today known as {\em  kneading
theory}, which gives a topological description and allows a
classification of the dynamics of multimodal maps, that is to
say, continuous endomorphisms of the interval with finitely many
turning points. From this theory, it follows that a $C^1$
unimodal map ({\it i.e.\/}, with just one turning point) is
semi-conjugate to  a quadratic map in the family $Q_{a}(x)= a x
(1-x)$,  where  $ a \in [0,4]$. This semi-conjugacy is a conjugacy if we assume that it is already a conjugacy when restricted to the
immediate basin of periodic attractors, and the unimodal map has no
wandering intervals, intervals of periodic points, or inessential
periodic attractors (that is, a periodic attractor whose immediate basin of attraction does not contain a turning point).
This result was generalized to the case of multimodal maps of the
interval by  W. de Melo and S. van Strien \cite{MS}, see also
\cite{salomao}.

In this paper, we construct, for each $m \ge 1$, a finite-dimensional family of rational maps whose restrictions to the unit circle are $2m$-multimodal circle maps.  For every post-critically finite $2m$-multimodal map $g$ satisfying certain conditions, see Definition \ref{defi_2m_modal}, we show that $g$ is topologically conjugate to a map in this family. We also prove a uniqueness result: two post-critically finite maps in this family that are topologically conjugate on the circle differ only by a rotation. 

On the path towards our goals, we follow some of the ideas in \cite{MS} while dealing with the additional difficulties posed by the topology and the cyclic order on the circle. It is important to point out here that the existence part of Theorem~\ref{Theo_Principal} was proved in \cite{MeloSalomaoVargas}   for  the family of maps $ p_{\mu}: \s \to \s $ induced by the trigonometric polynomials
\begin{equation}
P_{\mu}(t)=dt+ \mu_{2m}\sin(2 \pi mt)+ \sum_{j=0}^{m-1}( \mu_{2j}\sin(2\pi
jt) + \mu_{2j+1}\cos(2\pi jt)),
\end{equation} 
where $d \in \Z$ and  $ \mu =(\mu_1,\ldots,\mu_{2m}) \in \R^{2m}$ is in the 
set such that $p_\mu$ is $2m$-multimodal. It is proved in \cite{RempeStrien} that $p_\mu$ is unique in each class of topological conjugacy of $2m$-multimodal maps of the circle without periodic attractors. This type of result was also proved in \cite{kss} in the context of multimodal maps of the interval.

\section{Definitions and main results}

Let us start with some topological concepts related to multimodal maps of the 
circle. First of all, we observe that these maps have an even number of 
turning points, say $2m$, and in this case they are called $2m$-multimodal.
Below we define the {\em type}  $\tau$ of a  $2m$-multimodal map.

\begin{defi}\label{type} 
Let  $g: \s \to \s$  be a continuous $2m$-multimodal map and 
$c_1 < c_2 < \cdots < c_{2m} $ its turning points, 
ordered according to the counterclockwise orientation of the circle.
If  $c_1$ is a local  maximum,
the {\em type} of $g$ relative to $c_1$ is the vector
 $\tau=(\tau_1, \ldots, \tau_{2m-1} ),$  where 
 \[\tau_j := (-1)^j  \min \#\{[c_j, c_{j+1}] \cap g^{-1}(z) : z \in \s \},   \quad j=1, \ldots, 2m-1.\]  
 If $G: \R \to \R$ is a lift of $g$ with respect to 
 a covering map $\Pi: \R \to \s$  such that 
 $0 = C_1< C_2< \cdots < C_{2m} < 1 $ are the
 corresponding turning points, then
 $ \tau_j = (-1)^j \min \# \{[G(C_j), G(C_{j+1})]\cap \{s +\Z\}: s \in \R \}$
 and $\tau$  also defines the type of $G$ relative to $C_1.$ 
Moreover, the topological degree of $g$ is the integer $d$ such that 
$G(t+1) = G(t) +d$.
We say that $\kappa=(k_0, \ldots, k_{m})$  is above  $\tau$
and denote $\kappa \succ \tau $ if $k_j \geq -  \tau_{2j-1}+2, $ 
for $j=1, \ldots, m.$
\end{defi}

The class of $2m$-multimodal maps of the circle considered in this paper is given by the following definition. 

\begin{defi}\label{defi_2m_modal} Denote by $\mathcal G_m$ the set of all continuous $2m$-multimodal maps $g:\mathbb S^1 \to \mathbb S^1$ satisfying the following conditions:
\begin{itemize}
    \item[(i)] $g$ has no non-trivial intervals of periodic points of the same period;

    \item[(ii)] $g$ has no wandering intervals;

    \item[(iii)] $g$ has no inessential periodic orbits, i.e., attracting periodic orbits without a turning point in its immediate basin of attraction;

    \item[(iv)] every periodic turning point is attracting. 
    
\end{itemize}
\end{defi}

We remark that any $2n$-multimodal map of class $C^1$
is topologically semi-conjugate to a $2m$-multimodal  map 
in $\mathcal{G}_{m}$, for some $m \leq n$.
Moreover, the semi-conjugacy is non-injective only on the basins of inessential
periodic attractors,  wandering intervals, and non-trivial intervals of periodic points of the same period, 
see \cite{MS}.

We consider the family $f_{\mu \kappa}$  of maps on the circle $\s$
arising as restrictions to $\s$ of the Blaschke-type
products
\begin{equation} \label{F_a}
B_{\mu \kappa}(z)=e^{2\pi i
\eta_0}z^{k_0}\prod_{j=1}^{m}\left(\frac{z-a_j}{1-\overline{a}_jz}\right)^{k_j},
\end{equation}
where the parameter 
$\mu=(\eta_0, a_1, r_2, \eta_2, \ldots, r_m, \eta_m) \in \R^{2m}$ satisfies: 
(i) $\eta_0 \in \R$, (ii) $a_1>1$ and  
(iii) $a_j=r_je^{2\pi i \eta _j},$ where $r_j > 1$ and $\eta_j \in \R$, 
for $j=2,\ldots,m.$  
The  vector 
$\kappa= (k_0, \ldots, k_m)$ of positive integers $k_j$, which we  will fix and omit from the notation later on, controls the type and the topological  degree of the map $f_{\mu\kappa}$, 
while $\mu$ controls the position of its critical values on the circle. 

Although the formula for 
$B_{\mu \kappa}$ resembles that of a finite Blaschke product, we assume here that $|a_j|>1$, so $B_{\mu \kappa}$ is not a 
Blaschke product in the classical sense (where all zeros lie in the unit disk). Instead, it is a Blaschke-type product (or a Blaschke quotient), i.e., a rational map symmetric with respect to $\mathbb S^1$ and mapping $\mathbb S^1$
 to itself. Its restriction to 
$\mathbb S^1$ has at most $2m$ critical points, see Proposition~\ref{Riemann}. This implies that its restriction to $\s$, denoted $f_{\mu \kappa}:=B_{\mu \kappa}|_{\mathbb S^1}$, is at most $2m$-multimodal, in which case the critical points are quadratic. Therefore, we consider the non-empty set, as seen in Proposition~\ref{Riemann} and Lemma \ref{multimodal} below, given by
$$
\Delta :=\left\{ \mu =(\eta_0,\, a_1, \,r_2, \, \eta_2, \ldots, \, r_m,\eta_m) \in \R^{2m}:
f_{\mu \kappa} \textrm{ is } 2m-\textrm{multimodal} \right \}.
$$ 

We shall prove later that  $f_{\mu \kappa}$ is in $\mathcal G_m$ for every $\mu\in \Delta$. 
As in Definition \ref{type}, if $\tau=(\tau_1, \dots, \tau_{2m-1})$ is the type of a $2m$-multimodal
map $g$, we say that a vector $\kappa=(k_0, \ldots, k_m)$ is {\em above}
 $\tau$ and denote $\kappa \succ \tau$ if $k_j \geq -\tau_{2j-1}+2$,
 $j=1, \ldots, m$. Notice that this condition does not involve $k_0$ and $\tau_{2j}, j=1,\ldots m-1$. Indeed, $k_0$ is determined by the relation $k_0 = d+ \sum_{j=1}^m k_j$, where $d$ is the degree of $f_{\mu \kappa}$ as an endomorphism of $\mathbb S^1$. Hence, by choosing $k_i>0$ sufficiently large, we assume that 
 \begin{equation}\label{cond_kj}
 k_j \ \geq \ -\tau_{2j-1}+2, \quad \forall j=1,\ldots,m,\qquad \mbox{ and } \qquad k_0 \ =\  d +\sum_{j=1}^m k_j>1.
 \end{equation}

Our  main result guarantees that for $\kappa \succ \tau$,
the  family $f_{\mu \kappa}$, $\mu \in \Delta$,  exhibits
all the interesting dynamical behaviors of post-critically finite $2m$-multimodal maps on the circle with type
$\tau$. 

Given $g\in \mathcal{G}_m$, we denote by $\C_g\subset \mathbb{S}^1$ the (finite) set of turning points of $g$. We say that $g$ is post-critically finite if $\bigcup_{j=0}^\infty g^j(\C_g)$ is a finite set.

\begin{theo}\label{Theo_Principal}  Let   $g \in \mathcal{G}_m$ be a 
post-critically finite $2m$-multimodal map of type $\tau$ and degree $d$.
Fix $\kappa=(k_0,\ldots,k_m)$ satisfying \eqref{cond_kj}. Then there exists a
parameter $\mu \in \Delta$ such
that  $g$ is topologically conjugate to $f_{\mu \kappa}$. Moreover, if $f_1 = B_{\mu_1 \kappa}|_{\mathbb S^1}$ and $f_2=B_{\mu_2 \kappa}|_{\mathbb S^1}$ are both conjugate to $g$ for some $\mu_1, \mu_2 \in \Delta$, then $B_{\mu_1 \kappa}$ and $B_{\mu_2 \kappa}$ coincide up to conjugation by a rotation of the complex plane.   
\end{theo}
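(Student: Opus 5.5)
We prove existence with a Thurston-type pull-back argument on the parameter space $\Delta$, and uniqueness with a Thurston rigidity argument for the rational maps $B_{\mu\kappa}$.

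\textbf{Step 1: critical values as coordinates on $\Delta$.} Using Proposition~\ref{Riemann} and Lemma~\ref{multimodal}, we show that for $\mu\in\Delta$ the $2m$ critical points of $f_{\mu\kappa}$ on $\s$ are quadratic. They alternate as local maxima and minima, and the resulting type is $\tau$ whenever $\kappa\succ\tau$. The inequality $k_j\ge -\tau_{2j-1}+2$ should guarantee that the arc between consecutive turning points can wrap the prescribed number of times. The key claim is that the critical value map
\[
V:\Delta\to\s^{2m},\qquad \mu\mapsto \bigl(f_{\mu\kappa}(c_1),\ldots,f_{\mu\kappa}(c_{2m})\bigr),
\]
taken modulo the common rotation supplied by $\eta_0$, is a homeomorphism onto the open set of critical value configurations compatible with type $\tau$. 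Here $c_1$ is normalized, say fixed at $1$ or tied to $a_1$, which uses the real condition $a_1>1$. Properness follows from a degeneration analysis. As $r_j\to 1$, two critical points collide and multimodality is lost. As $r_j\to\infty$, the factor degenerates and the amplitude drops below the required wrapping. Local injectivity follows by computing the Jacobian of critical values with respect to $(a_j)$. Injectivity itself should come from a monodromy/degree argument, or from a direct realization argument: a branched-cover construction over $\hat{\mathbb C}$ that is symmetric under reflection in $\s$ is determined by its critical values and combinatorics.

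\textbf{Step 2: existence.} Let $P=\bigcup_{j\ge0}g^j(\C_g)$, a finite set, and encode the kneading data of $g$ as a cyclic order on $P$ together with the action of $g$. Let $\mathcal T$ be the space of configurations of $\#P$ points on $\s$ in the same cyclic order as $P$, modulo rotation. This is an open simplex. Define a pull-back operator $\sigma:\mathcal T\to\mathcal T$ as follows.
\begin{itemize}
\item[(a)] Given $x\in\mathcal T$, read off the positions prescribed for the critical values $g(c_i)$.
\item[(b)] By Step 1, find the unique $\mu\in\Delta$ realizing those critical values.
\item[(c)] Pull back each point of $x$ under $f_{\mu\kappa}$, choosing the preimage on the lap dictated by the combinatorics of $g$.
\end{itemize}
In the circle (real) setting, $\sigma$ is continuous, and one expects it to be weakly contracting in a suitable Hilbert-type metric on the simplex. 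This mirrors the argument for the interval in \cite{MS}.

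The main difficulty is showing that $\sigma$ has a fixed point. The plan is to find a compact convex subset of $\mathcal T$ that $\sigma$ maps into itself, then apply Brouwer. This requires a priori bounds: the gaps between consecutive points of $P$ must not degenerate under iteration of $\sigma$.
\begin{itemize}
\item Gaps containing no critical point shrink under pull-back only by controlled amounts, by Koebe-type estimates for the real rational map $f_{\mu\kappa}$ on laps.
\item Degeneration near critical points is ruled out by conditions (iii) and (iv) of Definition~\ref{defi_2m_modal}, together with the absence of wandering intervals. Any degenerating gap would produce a Thurston-type obstruction, namely a wandering or inessential interval for $g$.
\end{itemize}
A fixed point gives $f_{\mu\kappa}$ with the same kneading data on $P$. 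Since $g$ and $f_{\mu\kappa}$ both lie in $\mathcal G_m$ (we prove that $f_{\mu\kappa}\in\mathcal G_m$ first), the pull-back of the order-preserving map between post-critical sets converges, by the standard argument of \cite{MS}, to a topological conjugacy on $\s$.

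\textbf{Step 3: uniqueness.} Suppose $f_1$ and $f_2$ are conjugate by $h$. Extend $h$, using the reflection symmetry in $\s$, to a homeomorphism of $\hat{\mathbb C}$. Each $B_{\mu_i\kappa}$ has all its critical points on $\s$, at $0$, at $\infty$, or at the poles and zeros $a_j$ and $1/\bar a_j$. Points of the last kind map to $0$ or $\infty$, which are fixed, so $B_{\mu_i\kappa}$ is postcritically finite. Thurston's rigidity theorem for postcritically finite rational maps then shows that $B_{\mu_1\kappa}$ and $B_{\mu_2\kappa}$ are Möbius conjugate, provided neither is a Lattès map; the symmetry rules this out. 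The conjugating Möbius map preserves $\s$, $0$ and $\infty$, so it is a rotation.

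Equivalently, uniqueness follows from the contraction of $\sigma$ and Step 1. The conjugacy gives equal critical value configurations modulo rotation, and the injectivity of $V$ then gives the same $\mu$ up to rotation.
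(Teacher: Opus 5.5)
Your overall architecture matches the paper's: critical values as coordinates on a component of $\Delta_\tau$, a pull-back operator on the simplex of postcritical configurations, Brouwer on a compact convex invariant set, and Thurston rigidity for uniqueness. However, the two steps that carry the weight are not supplied.

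\textbf{Existence.} The invariant set needs a constant $c$, independent of how close $x$ is to $\partial W$, with $|F_\mu'|,|F_\mu''|\le c$ for \emph{all} $\mu\in\Delta_\tau$. This is Lemma~\ref{lem:deriv-bounds}. By Lemma~\ref{lem:collapse}, $r_j\to 1$ forces the critical-value gap $F_\mu(C_{2j-1})-F_\mu(C_{2j})$ toward $k_j$. The margin $k_j\ge-\tau_{2j-1}+2$ keeps type-$\tau$ gaps at least $1$ below $k_j$, so $r_j\ge 1+\delta$ on $\Delta_\tau$. You use the margin only for realizability and list $r_j\to1$ as just another exit from $\Delta$. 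Yet it is exactly the degeneration that would destroy any uniform constant. Koebe-type distortion bounds are scale-invariant and cannot replace an absolute bound on $|F_\mu'|$.

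Moreover, ``gaps shrink only by controlled amounts'' does not by itself stop degeneration under iteration. What closes the argument is the depth $s(i)$ of Proposition~\ref{prop_combinatorio}. It is finite because of conditions (i) and (iii) together with finiteness of the postcritical set; that is where $\mathcal G_m$ enters, not at the turning points. The depth is combined with the weighted simplex $W_\varepsilon=\{x_{i+1}-x_i\ge \varepsilon c^{-s(i)}\}$. A gap of depth $s\ge1$ maps over a gap of depth $s-1$ and loses at most a factor $c$. A gap of depth $0$ abuts a turning point, where the $C^2$ bound makes pull-back expand like a square root. Your ``Thurston-type obstruction'' bullet gestures at this but is not an argument.

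\textbf{Uniqueness.} Extending $h$ to $H_0$ on $\overline{\mathbb C}$ is the easy half. Theorem~\ref{thurston} applies only once you have $H_1$ with $H_0\circ B_1=B_2\circ H_1$ and $H_1$ isotopic to $H_0$ rel $P_{B_1}$. Such a lift exists only if the two branched covers match over $H_0$.

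The paper gets this from Lemma~\ref{toplemma2}, which gives $B^{-1}(\s)=\s\cup\Gamma_1\cup\cdots\cup\Gamma_m$ with $\Gamma_j\cap\s=\{c_{2j-1},c_{2j}\}$. Since $h$ carries critical points to critical points, the corresponding pieces for $B_1$ and $B_2$ match. These are $D_j\cap\mathbb D\to\overline{\mathbb C}\setminus\overline{\mathbb D}$ (degree $k_j$, branched only at $1/\overline{a}_j$) and $\mathbb D\setminus\bigcup_j\overline{D_j}\to\mathbb D$ (degree $k_0$, branched only at $0$). $H_0$ lifts on each piece with boundary values $h$, the lifts glue along the $\Gamma_j$, and reflection yields $H_1$. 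Then $H_1$ equals $H_0$ on $\s\cup\{0,\infty\}\supset P_{B_1}$, hence is isotopic to it rel $P_{B_1}$ (Alexander trick on each hemisphere). Hyperbolicity of the orbifold comes from the superattracting fixed points $0,\infty$ together with postcritical points on $\s$, not from the reflection symmetry.

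Your ``equivalent'' alternative is circular. A conjugacy sends critical values to critical values, but not to the same positions unless $h$ is already a rotation. Uniqueness of the fixed point of $\sigma$ would need the contraction you only conjecture.
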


To prove Theorem~\ref{Theo_Principal}, first we show that all
types $\tau$ such that $\kappa  \succ \tau$ can be realized
in the family $f_{\mu\kappa} $. Then we follow the strategy in  \cite{MS} to
show that this family
realizes all combinatorics of post-critically finite $2m$-multimodal maps.
This is the main step to get the existence
part of the theorem and depends on showing that a certain operator
acting in the space $\Delta$ of the parameter $\mu$ has a fixed point.
Thurston introduced this operator to  show that  post-critically
finite branched coverings of the 2-sphere $\mathbb S^2$ with hyperbolic orbifold and satisfying a combinatorial condition can be realized by rational maps on the Riemann sphere, see
\cite{mcmullen}. Following the same approach as in \cite{MS, salomao}, it follows that the case where the post-critical set is infinite and
the $\omega$-limit set of $\C_g$ is finite can also be realized by
the same strategy of the proof of Theorem~\ref{Theo_Principal}
by choosing convenient finite pieces of
critical orbits. The case where the $\omega$-limit set of $\C_g$ is infinite can be realized by taking the limit of maps from these
previous cases. In this paper, however, we only address the case of finite combinatorics. 

\subsection{Uniqueness} The uniqueness part of Theorem \ref{Theo_Principal} depends on the following
theorem.

\begin{theo} For a fixed $\kappa$, if   $f_{\mu_1 \kappa}$ and 
$f_{\mu_2 \kappa}$
are  topologically conjugate  post-critically finite  $2m$-multimodal
maps of the circle, then the Blaschke-type products  
$B_{\mu_1 \kappa}$ and $B_{\mu_2
\kappa}$ are Thurston equivalent.
\end{theo}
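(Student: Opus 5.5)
Let $h:\s\to\s$ be a homeomorphism with $h\circ f_{\mu_1\kappa}=f_{\mu_2\kappa}\circ h$. Write $B_i=B_{\mu_i\kappa}$ and let $P_i$ be the post-critical set of $B_i$ on $\hat{\mathbb C}$. The plan is to extend $h$ to a homeomorphism $\Phi_0$ of the sphere that conjugates $B_1$ to $B_2$ on the post-critical sets. Then one pull-back produces a second homeomorphism $\Phi_1$ with $\Phi_0\circ B_1=B_2\circ\Phi_1$, isotopic to $\Phi_0$ relative to $P_1$, which is exactly Thurston equivalence.

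\textbf{Step 1: locating the critical points.} Each $B_i$ commutes with the reflection $\sigma(z)=1/\bar z$, since it is a Blaschke-type quotient. By Proposition~\ref{Riemann}, $B_i$ has exactly $2m$ simple critical points on $\s$, which are the turning points. All its other critical points lie at $0$, $\infty$, the points $a_j$ and the points $1/\bar a_j$.
\begin{itemize}
\item The critical points $0,\infty$ are fixed by $B_i$.
\item The points $a_j$ and $1/\bar a_j$ are mapped to $0$ and $\infty$.
\end{itemize}
So $P_i=\{0,\infty\}\cup P_i^{\s}$, where $P_i^{\s}$ is the finite post-critical set on the circle. Also $h(P_1^{\s})=P_2^{\s}$, because $h$ carries turning points to turning points.

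\textbf{Step 2: the initial extension $\Phi_0$.} Extend $h$ radially:
\[
\Phi_0(re^{2\pi i t})=r\,h(e^{2\pi i t}),\qquad \Phi_0(0)=0,\quad \Phi_0(\infty)=\infty .
\]
This map commutes with $\sigma$, is a homeomorphism of $\hat{\mathbb C}$, and sends $P_1$ onto $P_2$.

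\textbf{Step 3: the lift $\Phi_1$.} Take the lift $\Phi_1$ of $\Phi_0$ through the branched covers, i.e.\ $B_2\circ\Phi_1=\Phi_0\circ B_1$ with $\Phi_1|_{\s}=h$. This lift exists because $\Phi_0$ maps critical values to critical values with the same local degrees:
\begin{itemize}
\item on $\s$ the turning points are quadratic for both maps, and $h$ conjugates them;
\item off $\s$ the local degrees $k_0$ and $k_j$ at $0,\infty$ and the preimage points are the same for $B_1$ and $B_2$, since $\kappa$ is fixed.
\end{itemize}
To construct $\Phi_1$, restrict $B_1$ to $\mathbb D\setminus\{\text{critical points}\}$ and $B_2$ likewise. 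The monodromy of each $B_i$ around $0$, around the critical values on $\s$ and around the $a_j$-preimages is determined by $\kappa$ and by the cyclic order of the critical and post-critical points on $\s$. That cyclic order is preserved by $h$, because $h$ preserves orientation. If $h$ reverses orientation, first compose with complex conjugation; this only swaps the roles of the parameters and does not affect the argument. Choose the lift that agrees with $h$ on $\s$, which is possible since $h$ already conjugates the boundary dynamics. Extend the lift to the outside disk by symmetry under $\sigma$.

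\textbf{Step 4: the isotopy, which is the main obstacle.} It remains to show that $\Phi_1$ is isotopic to $\Phi_0$ relative to $P_1$. The two maps agree on $\s$, which contains $P_1^{\s}$, and both fix $0$ and $\infty$. So on each of the two disks, $\Phi_0^{-1}\circ\Phi_1$ is a homeomorphism equal to the identity on the boundary circle and fixing one interior marked point, the center. By the Alexander trick, a homeomorphism of a closed disk that is the identity on the boundary is isotopic to the identity relative to the boundary. The isotopy can be taken to fix the center: apply the trick to the annulus/punctured disk and use that the mapping class group of a disk with one puncture rel boundary is trivial. Doing this symmetrically on both hemispheres gives the required isotopy relative to $\s\cup\{0,\infty\}\supset P_1$, so $B_1$ and $B_2$ are Thurston equivalent.

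The delicate points are two. First, the lift in Step 3 must be chosen to restrict exactly to $h$ on $\s$. Second, the marked points that are preimages of $0,\infty$ off the circle are not in $P_1$ and so need not be fixed by the isotopy; I expect to justify this by a careful covering-space argument.
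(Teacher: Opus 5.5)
Your overall scheme matches the paper's. You extend $h$ to a reflection-symmetric $\Phi_0$ fixing $0,\infty$ (the paper's $H_0$), lift it to $\Phi_1$ with $B_2\circ\Phi_1=\Phi_0\circ B_1$ and $\Phi_1|_{\s}=h$, and get the isotopy relative to $\s\cup\{0,\infty\}\supset P_1$ from the Alexander trick. Step 4 is fine, and more explicit than the paper's ``by construction''; the coned isotopy $tG(z/t)$ already fixes the center when $G$ does. Your second ``delicate point'' is a non-issue: the points $a_j,1/\bar a_j$ are not in $P_1$, and your isotopy never needs to fix them.

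The gap is Step 3, which is the heart of the proof. That $\Phi_0$ maps critical values to critical values with matching local degrees does not imply that a lift exists. You need $\Phi_0$ to intertwine the monodromies of the coverings $B_i:\hat{\mathbb C}\setminus B_i^{-1}(\mathcal V_i)\to\hat{\mathbb C}\setminus\mathcal V_i$, where $\mathcal V_i$ is the set of critical values of $B_i$. Your claim that this monodromy is ``determined by $\kappa$ and by the cyclic order on $\s$'' is asserted, not proved. The framework you propose for it also fails: $B_1|_{\mathbb D}$ is not a branched covering onto its image. Its image is all of $\hat{\mathbb C}$, the pieces $D_j\cap\mathbb D$ land outside $\overline{\mathbb D}$, and the map is not proper since $\partial\mathbb D$ goes into $\s$. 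So it has no monodromy around the critical values on $\s$.

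What is missing is exactly Lemma~\ref{toplemma2}: $B_i^{-1}(\s)=\s\cup\Gamma_1\cup\cdots\cup\Gamma_m$, where $\Gamma_j$ joins the consecutive turning points $c_{2j-1},c_{2j}$ and encloses $a_j,1/\bar a_j$. This cuts $\mathbb D$ into two kinds of pieces:
the disk $V_{B_i}=\mathbb D\setminus\bigcup_j\overline{D_j}$, mapped $k_0$-to-$1$ onto $\mathbb D$ and branched only at $0$;
and the disks $D_j\cap\mathbb D$, mapped $k_j$-to-$1$ onto $\hat{\mathbb C}\setminus\overline{\mathbb D}$ and branched only at $1/\bar a_j$.

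On each piece the lift of $\Phi_0$ exists with boundary values $h$ prescribed on the $\s$-part, since a degree-$k$ disk cover with a single branch point is equivalent to $z\mapsto z^k$. The pieces glue along the arcs $\Gamma_j\cap\mathbb D$: there $B_2$ is a local homeomorphism into $\s$, and both lifts equal $h$ at the endpoints $c_{2j-1},c_{2j}$. Reflecting then gives $\Phi_1$. This is the paper's construction, and it also disposes of your first delicate point.

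This construction requires $h$ to map $\Gamma_j(B_1)\cap\s$ onto $\Gamma_j(B_2)\cap\s$ for the same $j$, so that the degrees $k_j$ match. Preservation of orientation and of maxima/minima gives this only up to relabeling the $\Gamma_j$; the paper imposes it as a normalization, and your monodromy claim silently needs the same thing. Finally, for orientation-reversing $h$, replacing $B_1$ by $\overline{B_1(\bar z)}$ is harmless only because $\overline{B_1(\bar z)}=1/B_1(1/z)$ is conformally conjugate to $B_1$ via $z\mapsto 1/z$; you should say so.
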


This theorem, together with a theorem of Thurston, which is stated below
(see \cite{mcmullen} for a very elegant treatment),  implies that, for
a fixed $\kappa$, two topologically conjugate post-critically finite
$2m$-multimodal  maps in the family $f_{\mu\kappa}$ are the same up to
conjugation by a rotation of the circle.

\begin{theo}[Thurston]\label{thurston}
A post-critically finite branched covering of the $2$-sphere with a hyperbolic orbifold is Thurston equivalent to at most one rational map
up to conformal conjugacy.
\end{theo}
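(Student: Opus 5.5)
The statement is Thurston's rigidity theorem, which the paper cites from \cite{mcmullen}. I would follow the Teichmüller-space approach. Let $F:\mathbb S^2\to\mathbb S^2$ be a post-critically finite branched covering with post-critical set $P$, $\#P\ge 3$, and hyperbolic orbifold. Suppose $F$ is Thurston equivalent to rational maps $R_1$ and $R_2$. Composing the two equivalences shows that $R_1$ and $R_2$ are Thurston equivalent to each other. So it suffices to show that equivalent rational maps are conjugate by a Möbius map, which will automatically be isotopic rel $P$ to the given homeomorphism.

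\textbf{Step 1 (set-up).} Let $\mathcal T_P$ be the Teichmüller space of complex structures on $\mathbb S^2$ marked by $P$. Define Thurston's pull-back map $\sigma_F:\mathcal T_P\to\mathcal T_P$ by pulling a complex structure back under $F$. This is well defined because $F^{-1}(P)\supset P$ and $F$ is a branched cover.

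\textbf{Step 2 (fixed points correspond to rational realizations).} If $\phi_0$ and $\phi_1$ realize a Thurston equivalence between $F$ and a rational map $R$, then $[\phi_1]=[\phi_0]$ in $\mathcal T_P$ by the isotopy condition. The relation $\phi_0\circ F=R\circ\phi_1$ then says exactly that $\sigma_F[\phi_0]=[\phi_1]=[\phi_0]$. Conversely, two fixed points $\tau_1,\tau_2$ that are equal in $\mathcal T_P$ give rational maps $R_1,R_2$ and a conformal conjugacy between them. Hence uniqueness reduces to showing that $\sigma_F$ has at most one fixed point.

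\textbf{Step 3 (weak contraction).} $\sigma_F$ is holomorphic. Its coderivative at $\tau$ is the pushforward $F_*$ acting on integrable quadratic differentials on $\mathbb S^2\setminus P$, which has norm $\|F_*q\|_{L^1}\le\|q\|_{L^1}$. Hence $\sigma_F$ does not increase the Teichmüller metric.

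\textbf{Step 4 (strict contraction, the main obstacle).} I would show that some iterate $\sigma_F^{k}$ is uniformly strictly contracting on compact sets. Equivalently, equality $\|F_*^{k}q\|=\|q\|$ with $q\neq0$ is impossible.
\begin{itemize}
\item Equality in the triangle inequality defining the pushforward forces $F^*F_*q=(\deg F)\, q$. Hence $q$ is invariant under pull-back.
\item Iterating, $q$ would have its poles contained in $P$, and its zeros and poles would be forced along critical orbits. This compatibility is exactly what makes the orbifold Euclidean, i.e.\ $F$ a Lattès-type map.
\item Hyperbolicity of the orbifold excludes this, so $\|F_*\|<1$ pointwise. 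Compactness of the relevant moduli space of quadratic differentials, or the standard argument via $\sigma^2$ in \cite{mcmullen}, then upgrades this to $d(\sigma_F^{k}\tau_1,\sigma_F^{k}\tau_2)<d(\tau_1,\tau_2)$ whenever $\tau_1\ne\tau_2$.
\end{itemize}

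\textbf{Conclusion.} If $\tau_1\neq\tau_2$ were two fixed points, Step 4 would give
\[
d(\tau_1,\tau_2)=d(\sigma_F^{k}\tau_1,\sigma_F^{k}\tau_2)<d(\tau_1,\tau_2),
\]
a contradiction. Therefore the fixed point is unique, and by Step 2 the rational map is unique up to conformal conjugacy.
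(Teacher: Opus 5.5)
\textbf{Verdict: the outline is the right one, but Step 4 is not proved, and its one-step claim is false.} The paper gives no proof of this statement; it quotes it from McMullen, and the standard proof is Douady--Hubbard's. Your Steps 1--3, the fixed-point reduction in Step 2 and the final contradiction follow that standard route. The gap is Step 4, which is the only place hyperbolicity of the orbifold is used, i.e.\ the whole content of the theorem.

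\textbf{What goes wrong in Step 4.} Equality $\|F_*q\|=\|q\|$ gives $F^*F_*q=(\deg F)\,q$. This only says that $q$ is a pull-back of $F_*q$. At a general point $\tau$, the differentials $q\in Q_{\sigma_F(\tau)}$ and $F_*q\in Q_\tau$ live on differently marked spheres. So this is \emph{not} pull-back invariance of $q$, and your ``iterating'' bullet, which should produce a Euclidean orbifold, is only an assertion.

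Moreover, the one-step conclusion ``$\|F_*\|<1$ pointwise'' is false for hyperbolic orbifolds. Take $R(z)=z^2$, $q_1=dz^2/(z(z-1)(z-4))$ and $q=\tfrac12R^*q_1=2\,dz^2/((z^2-1)(z^2-4))$. Then $R_*q=q_1$ and $\|q\|=\tfrac12\|R^*q_1\|=\|q_1\|$, so equality holds. Let $\psi$ be a homeomorphism with $\psi(0,\infty,9,4,1)=(3,1,2,-1,-2)$. Then $F=R\circ\psi$ is a degree-two Thurston map with $P=\{0,\infty,1,4,9\}$. Since $\#P=5$, its orbifold is hyperbolic. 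And $R_*:Q(\psi(P))\to Q(P)$ is exactly the coderivative of $\sigma_F$ at $[\mathrm{id}]$, so $\|d\sigma_F\|=1$ there.

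\textbf{A way to close the gap: work only at a fixed point.}
\begin{itemize}
\item $\sigma_F$ is $1$-Lipschitz and Teichm\"uller geodesics are unique. So two fixed points $\tau_1\neq\tau_2$ force $\sigma_F$ to fix the geodesic between them pointwise, and $1$ is an eigenvalue of $d\sigma_F$ at $\tau_1$.
\item At $\tau_1$, $\sigma_F$ is realized by a rational map $f$ with $f(P)\subset P$ in the same coordinates on both sides. Taking adjoints gives $q\neq 0$ with $f_*q=q$.
\item Now equality genuinely yields $f^*q=d\,q$. With $m=\mathrm{ord}\,q+2$, this reads $m(x)=\deg_x f\cdot m(f(x))$.
\item The zeros $\{m\geq 3\}$ then form a finite backward-invariant set. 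Such a set is a totally invariant cycle, along which $m=d^{\ell}m$, so there are no zeros.
\item Hence $q$ has exactly four simple poles $E\subset P$. Every critical point $c$ satisfies $\deg_c f=2$ and $f(c)\in E$. Also $f(E)\subset E$ and $E$ contains no critical points.
\item Therefore $P=E$, $\#P=4$ and all weights are $N=2$. This is the $(2,2,2,2)$ orbifold, contradicting hyperbolicity.
\end{itemize}
You need an argument of this kind, or Douady--Hubbard's strict-contraction lemma for an iterate of $\sigma_F$, to complete Step 4.
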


Recall that two post-critically finite branched coverings $\Psi_1$
and $\Psi_2$ of the $2$-sphere are
said to be {\it Thurston equivalent} if there exist homeomorphisms $H_0, \, H_1: \mathbb S^2\to \mathbb S^2$ such that
$$
\begin{CD}
(\mathbb S^2,P_{\Psi_1})@>{H_1}>>(\mathbb S^2,P_{\Psi_2})\\
@V{\Psi_1}VV             @VV{\Psi_2}V\\
(\mathbb S^2,P_{\Psi_1})@>{H_0}>>(\mathbb S^2,P_{\Psi_2})
\end{CD}
$$
commutes, and $H_1$ is isotopic to $H_0$ relative to $P_{\Psi_1}$.
Here, $P_{\Psi_i} = \bigcup_{n=1}^{\infty}\Psi_i^n(\C_{\Psi_i})$
is the post-critical set of $\Psi_i$, $\ \ i=1, 2$.

Recall also that an {\it orbifold\/} is a Hausdorff topological
space locally modeled on the quotient of the Euclidean space by a finite group of diffeomorphisms. In the statement of
Theorem~\ref{thurston} above, only a two-dimensional version of this notion is needed. It turns out, see \cite{mcmullen1}, that a two-dimensional orbifold with underlying space $X$ is determined by the following data:
\begin{itemize}
\item[(a)] A function $N:X\to \mathbb{N}$ such that the set $\{x\in
X: N(x)>1\}$ is discrete;
\item[(b)] An open cover $\{U_\alpha\}$ of $X$ with the property
that for each $x\in X$ and each $U_\alpha\ni x$ there exist a
homeomorphism $\psi_\alpha:\mathbb{D}\to U_\alpha$ with
$\psi_\alpha(0)=x$ and a branched covering map
$\phi_\alpha:\mathbb{D}\to U_\alpha$ such that
$\phi_\alpha(z)=\psi_\alpha(z^n)$, where $n=N(x)$.
\end{itemize}
The Euler characteristic of an orbifold $\mathcal{O}=(X,N)$  is the
rational number
\begin{equation} \label{orbifold}
\chi(\mathcal{O})\;=\;\chi(X) - \sum_{x\in
X}\left(1-\frac{1}{N(x)}\right) \ .
\end{equation}
An orbifold is said to be {\it hyperbolic\/} if its Euler
characteristic is negative. Associated with each  critically finite
branched covering $\Psi:\mathbb{S}^2\to \mathbb{S}^2$ we have an
orbifold with underlying space $X=\mathbb{S}^2$ (or $\mathbb{S}^2$
minus finitely many points) provided we take $N(z)$ to be the
least common multiple of the local degrees $\deg(\Psi^n, w)$ at
those points $w\in \mathbb{S}^2$ such that $\Psi^n(w)=z$. Notice that
$N(z) > 1$ if and only if $z \in P_{\Psi}$. It may be the case that
$N(z) = \infty$ for some points; this happens precisely when there are
periodic critical points. These points must be excluded. In any
case, if the post-critical set $P_{\Psi}$ has more than four points, then the orbifold of $\Psi$ is hyperbolic, as is easily seen from
\eqref{orbifold}.

In our setting, the orbifold associated to $B_{\mu \kappa}$ is always
hyperbolic. To see that, observe that $0$ and $\infty$ are critical
and fixed points of $B_{\mu \kappa}$. This implies that
$N(0)=N(\infty)=\infty$ and, therefore, $X\subset
\mathbb{S}^2\backslash \{0,\infty\}$. We conclude that $\chi(X)\leq
0$ and, therefore, $\chi(\mathcal{O})\leq0$. The post-critical set
contained in $\s$ may further decrease the Euler characteristic
(this is the case when there is a periodic critical point
$c\in\s$ implying $N(c)=\infty$ and $c\notin X$), or has
a point satisfying $1<N(x)<\infty$ (this happens when there is a
critical point in $\s$ which is eventually periodic but
not periodic). In both situations we see from (\ref{orbifold}) that
the contribution of the post-critical set inside $\s$
implies $\chi(\mathcal{O}) < 0$.

\section{Blaschke-type multimodal maps}

The map $B= B_{\mu\kappa}
:\overline{\mathbb{C}} \to \overline{\mathbb{C}}$ given by
{\eqref{F_a}}  has  degree $d_B=\sum_{j=0}^m k_j$ 
on the Riemann sphere
 $\overline{\mathbb{C}}$  and is symmetric with respect to  the unit circle $\s,$ 
 that is $B(1/ \overline{z})= 1/\overline{B(z)}$.
A simple computation shows that  the derivative of $B$ is:
\begin{equation} \label{fl_a}
B^{\, \prime}(z)\;=\; B(z)\left (\frac{k_0}{z}+
\sum_{j=1}^{m}\,k_j\,\frac{1-|a_j|^2}{(1-\overline{a}_jz)(z-a_j)} \right).
\end{equation}
Thus, if $\C_B$ is the set of all critical points of $B$,
the Riemann-Hurwitz formula, see \cite{reyssat}, tells us that
\begin{equation} \label{hurwitz}
\sum_{c \ \in\  C_B} \nu(B;c)\;=\;2d_B-2 \; = \; 2\sum_{j=0}^m k_j -2\ ,
\end{equation}
where $\nu(B;c)=\mbox{deg}(B;c)-1$ is the {\it discrepancy\/} or
{\it defect\/} of $B$ at $c$. Note that the defect of $B$ at a point
which is not critical is zero. Hence, we can think of the sum on the
left-hand side of \eqref{hurwitz} as extended over all points of the
Riemann sphere. Now, from {\eqref{F_a}} and (\ref{fl_a})  we see that

$$
\C_B \ \supset
\left\{a_1,\,\frac{1}{ a_1}, \,a_2, \, \frac{1}{\overline{a}_2}, \ldots, a_m,
\, \frac{1}{\overline{a}_m}\right\} \, \cup\, \{0,\infty\} \ .
$$
Moreover

\begin{equation} \label{crit1}
\nu(B;0)\;=\;\nu(B;\infty)\;=\;k_0-1 \ \ \textrm{ and } \ \
\nu(B;a_j)\;=\;\nu(B; 1/\overline{a}_j)\;=\;k_j-1
\end{equation}
for all $j=1,\ldots,m$. These facts yield the following proposition.

\begin{pro}\label{Riemann}
The restriction of $B$ to the circle  $\s$  has
at most $2m$ critical points, in which case these critical
points are quadratic, and the set $\Delta$ is open.
\end{pro}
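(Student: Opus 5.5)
Let me write a plan.

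The approach is to count defects with the Riemann–Hurwitz formula \eqref{hurwitz}. By \eqref{crit1}, the points $0,\infty,a_j,1/\overline{a}_j$ ($j=1,\dots,m$) contribute a total defect of
\[
2(k_0-1)+2\sum_{j=1}^m (k_j-1)=2\sum_{j=0}^m k_j-2-2m .
\]
Hence the remaining critical points of $B$, counted with defect, have total defect exactly $2m$. Critical points of $f_{\mu\kappa}$ on $\s$ are among these remaining ones, because $|a_j|>1$ keeps $0,\infty,a_j,1/\overline a_j$ off the circle. So $f_{\mu\kappa}$ has at most $2m$ critical points on $\s$, counted with multiplicity.

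To make this precise, I would work with the logarithmic derivative
\[
\Phi(z)=\frac{zB'(z)}{B(z)}=k_0+\sum_{j=1}^m k_j\,\frac{(1-|a_j|^2)z}{(1-\overline a_j z)(z-a_j)} .
\]
Clearing denominators gives $\Phi=Q/\prod_j(1-\overline a_j z)(z-a_j)$, with $Q$ a polynomial of degree at most $2m$. The critical points of $B$ other than $0,\infty,a_j,1/\overline a_j$ are exactly the zeros of $Q$, with the same multiplicities, since $B(z)\neq0,\infty$ there and $B'=B\Phi/z$. So there are at most $2m$ of them counted with multiplicity.

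On $\s$, write $z=e^{2\pi i t}$. Then $\Phi(e^{2\pi it})$ is real, since it equals $\frac{d}{dt}\arg$ of $B$ divided by $2\pi$, i.e.\ the derivative of the lift of $f_{\mu\kappa}$.

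A turning point of $f_{\mu\kappa}$ is a point where this real function changes sign, so it is a zero of $Q$ on $\s$. Since $Q$ has at most $2m$ zeros, $f_{\mu\kappa}$ has at most $2m$ turning points. If there are exactly $2m$, then $Q$ has $2m$ distinct simple zeros on $\s$, and all zeros of $Q$ lie on the circle. A simple zero of $\Phi$ gives a simple zero of $B'$, so each critical point is quadratic.

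For openness of $\Delta$: the coefficients of $Q$ depend continuously (polynomially) on $\mu$, and the leading coefficient does not vanish when $Q$ has $2m$ roots. Simple roots of $Q$ on $\s$ persist under small perturbation. They remain on $\s$ because of the symmetry $B(1/\overline z)=1/\overline{B(z)}$: the zero set of $Q$ is invariant under reflection $z\mapsto 1/\overline z$, so a simple root near the circle cannot leave it without a partner root also appearing. The derivative of the lift changes sign at each such simple root, so these are still turning points. Hence there are still $2m$ turning points, and $\Delta$ is open.

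The main obstacle is the degree and leading-coefficient bookkeeping of $Q$, including the $\infty$ count. I would handle it by noting that once $2m$ distinct roots of $Q$ lie on $\s$, $\deg Q=2m$ is forced, and this is an open condition.
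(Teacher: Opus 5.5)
Your proof is correct. Your opening defect count is the paper's Riemann--Hurwitz argument. You then make it explicit by writing $zB'(z)/B(z)=Q(z)/\prod_{j}(1-\overline{a}_jz)(z-a_j)$ and reading off at most $2m$ zeros of $Q$.

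This explicit form buys two things. First, the count is self-contained and does not invoke the global defect formula on the sphere. Second, since $F'_{\mu\kappa}(t)=\Phi(e^{2\pi i t})$ (this is \eqref{efelinha}), the multiplicity of a critical point of $f_{\mu\kappa}$ is literally the multiplicity of the corresponding zero of $Q$. The paper's version is shorter but works only with defects of $B$.

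You also argue that $\Delta$ is open, which the paper asserts but does not prove. Your argument works, but it can be simplified in three places.
\begin{itemize}
\item The ``obstacle'' you flag does not arise. The $z^{2m}$-coefficient of $Q$ is $k_0\prod_{j}(-\overline{a}_j)\neq 0$ for every parameter, so $\deg Q=2m$ always.
\item The reflection-symmetry argument for openness is more than you need. $F'_{\mu}(t)$ is real and jointly continuous in $(\mu,t)$, so $2m$ alternating sign changes of $F'_{\mu_0}$ persist for nearby $\mu$. The upper bound then forces exactly $2m$ turning points.
\item Your claims ``total defect exactly $2m$'' and ``exactly the zeros of $Q$'' tacitly assume the $a_j$ are pairwise distinct. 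If $a_i=a_j$, then $Q$ acquires zeros at $a_i$ and $1/\overline{a}_i$, and the remaining defect drops below $2m$. Only the inequality is used, so your conclusion is unaffected.
\end{itemize}
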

\begin{proof}
Combining  the Riemann-Hurwitz formula {\eqref{hurwitz}} with
{\eqref{crit1}}, we get
$$
2(k_0-1)+2\sum_{j=1}^m (k_j-1) +\sum_{c \ \in \ \C_B \ \cap \  \s}
\nu(B;c) \; \leq \;2\sum_{j=0}^m k_j\,-2 \ ,
$$
and therefore $\sum_{c \, \in\,  \C_B \, \cap \,  \s} \nu(B;c)\leq 2m$. This
shows that $B$ has at most $2m$ critical points on the  circle.
If $B$ has precisely $2m$ such critical points, then necessarily
$\nu(B;c)=1$ for all $c \ \in \  \C_B \ \cap \  \s$. Therefore, all
critical points of $B$ on the circle are quadratic turning points.
\end{proof}

%\begin{remark}Proposition \ref{Riemann} can be generalized in the following way: for every $k_0\in \Z_{\geq 0}$, $k_1,\ldots,k_m\in \Z_{\geq 0}$ and parameters $\eta_0\in \R, |a_j|>1, j=1,\ldots,m$, the number of critical points on the unit circle $\s$ of the corresponding map $B$ is at most $2M$, where $M:=\#\{j\in \{1,\ldots,m\}: k_j > 0\}$. 
%\end{remark}

The next lemma states that there are parameters 
$\mu=(\eta_0, \,a_1,\, r_2, \,\eta_2, \ldots, r_m,\, \eta_m) \in \R^{2m} $ 
such that 
$f_{\mu\kappa}$ (the restriction of $B_{\mu\kappa}$ to the circle)  is 
 $2m$-multimodal. 
For this, remember that  $a_j = r_je^{2\pi i \eta_j}, j=1, \ldots,m$, with $r_j>1,$ and  $a_1 =r_1>1$ (we can choose 
$\eta_1 =0$). We 
define $r_{\max}=\max\{r_1, \ldots, r_m\}$ and define 
$r_{\min}=\min \{r_1, \ldots, r_m \}$.

\begin{lem}\label{multimodal} If
$0= \eta_1 < \eta_2 < \cdots < \eta_m < 1, $ then there exists $\epsilon >1$
such that if $1 < r_{\min} \leq r_{\max} < \epsilon$, then $f_{\mu\kappa}$ is a
$2m$-multimodal map. %Moreover, if $r_{\min} <1$, then  $f_{\mu \kappa}$ is at most $2(m-1)$-multimodal.
\end{lem}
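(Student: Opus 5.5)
On $\s$ we write $z=e^{2\pi i t}$ and study the lift $F(t)$ of $f_{\mu\kappa}$. The strategy is to show that, when all $r_j$ are close to $1$, each factor $(z-a_j)/(1-\overline a_j z)$ contributes two turning points of $f_{\mu\kappa}$ near $e^{2\pi i\eta_j}$. Proposition~\ref{Riemann} then shows that these $2m$ points are all the critical points, and that they are quadratic.

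We first compute the derivative. For $z=e^{2\pi it}$, differentiating the argument of $B$ gives
\[
F'(t)\;=\;k_0-\sum_{j=1}^m k_j\,P_{r_j}(t-\eta_j),\qquad P_r(s)=\frac{r^2-1}{|e^{2\pi i s}-r|^2}=\frac{r^2-1}{r^2-2r\cos(2\pi s)+1}.
\]
This follows from \eqref{fl_a} via $zB'(z)/B(z)$. Each factor is a Möbius map of $\s$ to itself with $|a_j|>1$, so it reverses orientation and contributes the negative Poisson-type term $-k_jP_{r_j}$. Here $P_r>0$, $\int_0^1 P_r=1$, and $P_r$ is even and unimodal in $s$. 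As $r\downarrow 1$ it satisfies $P_r(0)=(r+1)/(r-1)\to\infty$, while $P_r\to 0$ uniformly on $\{|s|\ge\delta\}$ for each fixed $\delta>0$.

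Next we fix $\delta$ smaller than half the minimal gap between the distinct $\eta_j$ (mod $1$), and we take $\epsilon$ so close to $1$ that $\sum_j k_jP_{r_j}(s)<k_0/2$ whenever $s$ is at distance at least $\delta$ from every $\eta_j$. On those regions $F'>0$. On the arc $I_j=(\eta_j-\delta,\eta_j+\delta)$ we have $F'(t)=k_0-h_j(t)-k_jP_{r_j}(t-\eta_j)$, where $h_j$ is the sum of the other terms; $h_j$ is positive, smooth, and small in $C^1$ on $I_j$ uniformly in the allowed $r$'s. At $t=\eta_j$ we get $F'\approx k_0-k_j(r_j+1)/(r_j-1)<0$ once $\epsilon$ is small, and at $t=\eta_j\pm\delta$ we have $F'>0$. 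Hence $F'$ has at least one zero on each side of $\eta_j$ inside $I_j$, so at least $2m$ sign changes in total. Each sign change is a turning point (a local max then a local min near $\eta_j$, since $F'$ goes $+,-,+$). There are therefore at least $2m$ turning points. Proposition~\ref{Riemann} says the restriction has at most $2m$ critical points, so there are exactly $2m$, all simple zeros of $F'$ and quadratic. Thus $f_{\mu\kappa}$ is $2m$-multimodal.

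The main obstacle is the uniformity. We need a single $\epsilon$ that works for all $r_j\in(1,\epsilon)$ simultaneously, and this comes down to the bound $\sup_{|s|\ge\delta}P_r(s)\le (r^2-1)/(4\sin^2(\pi\delta)\cdot r)\to0$. The sign at the centre, $k_0-k_jP_{r_j}(0)-h_j(\eta_j)<0$, is automatic for $r_j$ near $1$ since $h_j\ge0$. For the at-least-two-zeros argument we only use the intermediate value theorem, so no monotonicity of $F'$ on $I_j$ is required, and the upper bound comes for free from Proposition~\ref{Riemann}. The one remaining care point is that $\eta_j$ distinct mod $1$ is exactly what the hypothesis $0=\eta_1<\cdots<\eta_m<1$ guarantees.
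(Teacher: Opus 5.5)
Your proof is correct and follows the paper's argument. Both compute $F'(t)=k_0-\sum_j k_j (r_j^2-1)/|e^{2\pi i t}-a_j|^2$, show $F'<0$ at each $\eta_j$ and $F'>0$ at points between consecutive $\eta_j$'s to get at least $2m$ sign changes, and invoke Proposition~\ref{Riemann} for the upper bound and quadraticity; your explicit $\delta$ and the estimate $P_r(s)\le (r^2-1)/(4r\sin^2(\pi\delta))$ for $|s|\ge\delta$ make rigorous the uniformity in all $r_j$ at once, which the paper's limit statements (e.g.\ ``$F'(s_n)\to k_0$ as $r_n\to1^+$'') leave implicit.
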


\begin{proof}
From (\ref{F_a}) we see that $f = f_{\mu\kappa}$ satisfies
\begin{equation} \label{f_a_2}
f(z)=e^{2\pi i
\eta_0}z^{k_0-\sum_{j=1}^{m}k_j}\prod_{j=1}^{m}\left(\frac{z-a_j}{|z-a_j|}\right)^{2k_j}, \qquad \forall z\in \mathbb S^1.
\end{equation}
Let $F : \R \to \R$ be a lift  of $ f$ with
respect to the covering $\Pi(t)=  e^{2\pi i t}$, that is,  $e^{2 \pi
i F(t)}=f(e^{2 \pi i t})$. From (\ref{f_a_2}) it follows that up to an integer
\begin{equation}\label{theta_til}
F(t)=\eta_0 + k_0t + \sum_{j=1}^{m}k_j(2\varphi_{j}(t)-t)
\end{equation}
where $\varphi_{j }(t)$ is an analytic function satisfying
$\varphi_j(\eta_j)=\eta_j+\frac{1}{2}$ and
\begin{equation} \label{varfi}
e^{2 \pi i \varphi_j (t)}=\frac{e^{2 \pi i t}-a_j}{|e^{2\pi i t}-a_j|}\cdot
\end{equation}

For $ j=1,\ldots, m$  define $z_j = \frac{a_j}{|a_j|}=e^{2 \pi i
\eta_j}$ and $z_{m+1} = z_1$. Then let $(z_j , z_{j+1})$ be the arc
on $\s$ which does not intersect $\{z_1, \ldots, z_m \}$.
Choose $s_j \in [0,1)$ such that $w_j := e^{2 \pi i s_j}
\in (z_j, z_{j+1})$. A direct computation shows that
\begin{equation}\label{efelinha}
F'(t)=k_0-\sum_{j=1}^{m}k_j\frac{r_j^2-1}{| e^{2\pi i t}- a_j|^2} \  \cdot
\end{equation}
Therefore we have, for all $n=1,\ldots, m$,
\begin{equation} \label{eq.Fleta}
F'(\eta_n)=k_0-\sum_{j=1,  j\neq n}^{m}k_j\frac{r_j^2-1}{| z_n-
a_j|^2}-k_n\frac{r_n+1}{ r_n-1} \ ,
\end{equation}
which tends to $-\infty$ as $r_n\to 1^{+}$. Likewise, we have, for all
$n=1,\ldots, m$,
\begin{equation} \label{eq.Fls}
F'(s_n)=k_0-\sum_{j=1}^{m}k_j\frac{r_j^2-1}{| w_n- a_j|^2} \ ,
\end{equation}
which tends to $k_0 > 0$ as $r_n \to 1^{+}$. It follows that there exists
$\epsilon>1$ such that if $1 < r_{\min}\leq r_{\max} < \epsilon,$ then $F^{\prime}(\eta_n)<0$ and
$F^{\prime}(s_n)>0$. This implies the lemma. 
\end{proof}

Proposition~\ref{Riemann} and Lemma~\ref{multimodal}  above imply that, for  $\kappa$ fixed, the set of parameters $\Delta =\{\mu\in \R^{2m}: f_{\mu\kappa} \; \textrm{ is } \; 2m-\textrm{multimodal} \}$ is a non-empty open set.  

The topological lemma below, illustrated in Figure~\ref{fig:julia},  gives a description of the pre-image of the unit circle under a Blaschke-type
product $B_{\mu\kappa}$ considered in equation \eqref{F_a}.
It is needed to prove that the turning points of a
lift of $f_{\mu \kappa},$ are 
$2m$ bounded analytic functions
globally defined in $\Delta$, see Proposition~\ref{prop:lifts}.
It is also needed for the application of Thurston's criterion,  given in Theorem~\ref{thurston},
to prove the uniqueness part of Theorem~\ref{Theo_Principal}.
 
\begin{lem} \label{toplemma2}
If the restriction of $B = B_{\mu\kappa}$ to the unit circle $\s$ is 
$2m$-multimodal, then
\begin{equation}\label{preimage}
B^{-1}(\s)\;=\;\s \cup \Gamma_1 \cup \Gamma_2 \cup  \ldots \cup \Gamma_m, 
\end{equation}
where each $\Gamma_j$ is an analytic Jordan curve, and the following
properties hold:
\begin{itemize}
\item[(a)] Each $\Gamma_j$ is symmetric under inversion about $\s$;

\item[(b)] We have $\Gamma_j\cap \s =\{c_j',c_j''\}$, where
$c_j', \ c_j'' \in \s $ are distinct critical points of $B$;

\item[(c)] If $D_j$ is the  topological open disk bounded by $\Gamma_j$,
then the closures of $D_1, \ldots, D_m$ are pairwise disjoint and  
$ D_j $ contains $a_j$ and   $1 / \overline{a}_j.$
\end{itemize}
\end{lem}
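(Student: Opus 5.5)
The plan is to study the real-analytic set $B^{-1}(\s)$ through the function $u=\log|B|$, which is harmonic on $\overline{\mathbb C}$ away from the zeros and poles of $B$. Indeed,
\[
u(z)=\log|B(z)| = k_0\log|z| + \sum_{j=1}^m k_j\log\left|\frac{z-a_j}{1-\overline a_j z}\right|,
\]
and $B^{-1}(\s)=\{u=0\}$. Moreover $u(1/\overline z)=-u(z)$, so $\{u=0\}$ is invariant under inversion. The set $\{u=0\}$ is a real-analytic graph. Its vertices are exactly the critical points of $B$ lying on $B^{-1}(\s)$, since at a point where $B'\ne0$ the map $B$ is a local diffeomorphism and the preimage is a smooth arc. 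At a critical point of local degree $n$ the set has $2n$ arcs meeting at equal angles.

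First I will locate all critical points. The zeros of $B$ are $0$ (order $k_0$) and the points $a_j$, with $|a_j|>1$. The poles are $\infty$ and the points $1/\overline a_j$. None of these lies on $B^{-1}(\s)$. By the Riemann--Hurwitz count in Proposition~\ref{Riemann}, when $f$ is $2m$-multimodal the defects at $0,\infty,a_j,1/\overline a_j$ together with the $2m$ simple turning points on $\s$ exhaust $2d_B-2$. So there are no other critical points anywhere. Hence $B^{-1}(\s)$ is a smooth $1$-manifold except at the $2m$ turning points $c_1,\dots,c_{2m}\in\s$. At each $c_i$ exactly four arcs meet at right angles: two arcs of $\s$ and one arc $\gamma_i$ crossing $\s$ transversally, with $\gamma_i$ symmetric under inversion.

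Next I will trace each transversal arc. Take the branch of $\gamma_i$ leaving $c_i$ into the unbounded component $\{|z|>1\}$. It is a smooth curve with no vertices outside $\s$, so it cannot end. It cannot accumulate at the poles, where $u\to-\infty$, or at the zeros, where $u\to+\infty$. It is also closed and bounded away from $\infty$, since $u\sim k_0\log|z|$ there. Therefore it must return to $\s$ at some other turning point $c_{i'}$. Reflecting this branch gives the inside half, and together they form a Jordan curve $\Gamma$ symmetric under inversion, meeting $\s$ exactly at $\{c_i,c_{i'}\}$, with $c_i\neq c_{i'}$ because the arc leaves and re-enters transversally. This pairs the $2m$ turning points into $m$ curves $\Gamma_j$ and gives \eqref{preimage}, (a) and (b). There are no further closed components of $\{u=0\}$ off $\s$, by the maximum principle: such a component, together with its reflection, would bound a region where $u$ is harmonic, unless that region contains a zero or pole. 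This is exactly the subtle point taken up below.

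The main obstacle is (c): showing that each $D_j$ contains exactly one pair $a_j,1/\overline a_j$ and that the closures are pairwise disjoint. Disjointness of interiors follows from embeddedness. The curves $\Gamma_j$ are disjoint except possibly at turning points, and each turning point lies on exactly one transversal arc, so the closures are disjoint. Nesting is excluded because each $\Gamma_j$ meets $\s$ and is symmetric, so one disk cannot lie inside another. For the containment, I will argue with the maximum principle on $D_j\cap\{|z|>1\}$. Its boundary lies in $\{u=0\}$, and a nonconstant harmonic function cannot vanish on the whole boundary of a bounded region. So this region must contain a singularity of $u$ in $\{|z|>1\}$, that is, some $a_j$; by symmetry the reflected half contains $1/\overline a_j$. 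Each bounded complementary region of $\{u=0\}$ in $\{|z|>1\}$ that avoids $\infty$ needs a zero. There are $m$ such disks and $m$ points $a_j$, so the count forces exactly one $a_j$ per disk. After relabeling, which is consistent with the cyclic order of the $\eta_j$ via the continuity/deformation from Lemma~\ref{multimodal}, this gives (c).
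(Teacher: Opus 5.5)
\textbf{There is a genuine gap in the non-nesting step of (c).} You say nesting is excluded ``because each $\Gamma_j$ meets $\s$ and is symmetric'', and earlier that disjointness of the interiors ``follows from embeddedness''. Neither holds as a topological statement. The boundary of $\{\tfrac12<|z|<2,\ \epsilon<\arg z<2\pi-\epsilon\}$ is a Jordan curve invariant under $z\mapsto 1/\overline z$ that meets $\s$ exactly at $e^{\pm i\epsilon}$. Its interior contains a small circle orthogonal to $\s$ near $-1$, which is also symmetric and meets $\s$ in two points.

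Your final count does not rescue this. If $D_i\subset D_j$, the bounded complementary regions of $\{u=0\}$ in $\{|z|>1\}$ are no longer the sets $D_k\setminus\overline{\mathbb D}$; for instance, one is $D_i\setminus\overline{\mathbb D}$ and another is the part of $D_j\setminus\overline{\mathbb D}$ outside $\overline{D_i}$. There are still $m$ of them, and each can contain exactly one $a_k$. So the pigeonhole is satisfied while (c) fails: $D_j$ contains two of the $a_k$, and $\overline{D_i}\cap\overline{D_j}\neq\emptyset$. Something beyond topology and counting is needed.

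The missing ingredient is the sign of $u$. Off the turning points $\nabla u\neq0$ on $\{u=0\}$, so $u$ changes sign across every arc of $\{u=0\}$. By the maximum principle, a complementary region on which $u>0$ must contain a pole of $B$, and the only pole in $\{|z|>1\}$ is $\infty$. Hence every complementary region in $\{|z|>1\}$ not containing $\infty$ has $u<0$, so no outer half of a $\Gamma_i$ can separate two such regions. If $D_i\subset D_j$, both sides of the outer half of $\Gamma_i$ lie in the bounded set $D_j$, which is a contradiction. This is the paper's argument with $U=D\cap\mathbb D$ and $V=(\tilde D\cap\mathbb D)\setminus U$: $V$ would have to map onto $\mathbb D$ and so contain $0$, but $0\notin\tilde D$.

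You also use, without proof, that $D_j$ is inversion-invariant (to get $1/\overline a_j\in D_j$, and implicitly $0\notin D_j$). This needs a line: inversion reverses the orientation of both the sphere and $\Gamma_j$, hence preserves each side of $\Gamma_j$.

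Your exclusion of extra closed components of $B^{-1}(\s)$ is only sketched, since ``$m$ such disks'' presupposes both no loops and no nesting. It is cleanest via Euler's formula. The graph $\s\cup\Gamma_1\cup\dots\cup\Gamma_m$ is connected with $2m$ vertices and $4m$ edges, hence has $2m+2$ faces. Each extra loop adds at least one face, and each face must contain one of the $2m+2$ zeros and poles. Note that this count, by itself, is blind to nesting. The paper instead proves \eqref{preimage} by counting preimages of a regular value $w\in\s$, using $d+2\sum k_j=d_B$.

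Two minor points. You have the signs reversed: at zeros of $B$, $u\to-\infty$, and at poles, $u\to+\infty$. The relabelling via Lemma~\ref{multimodal} is unnecessary, since the labels are defined by which pair $\{a_j,1/\overline a_j\}$ each disk contains.
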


\begin{figure}[ht]
\includegraphics[width=8cm, height=8cm]{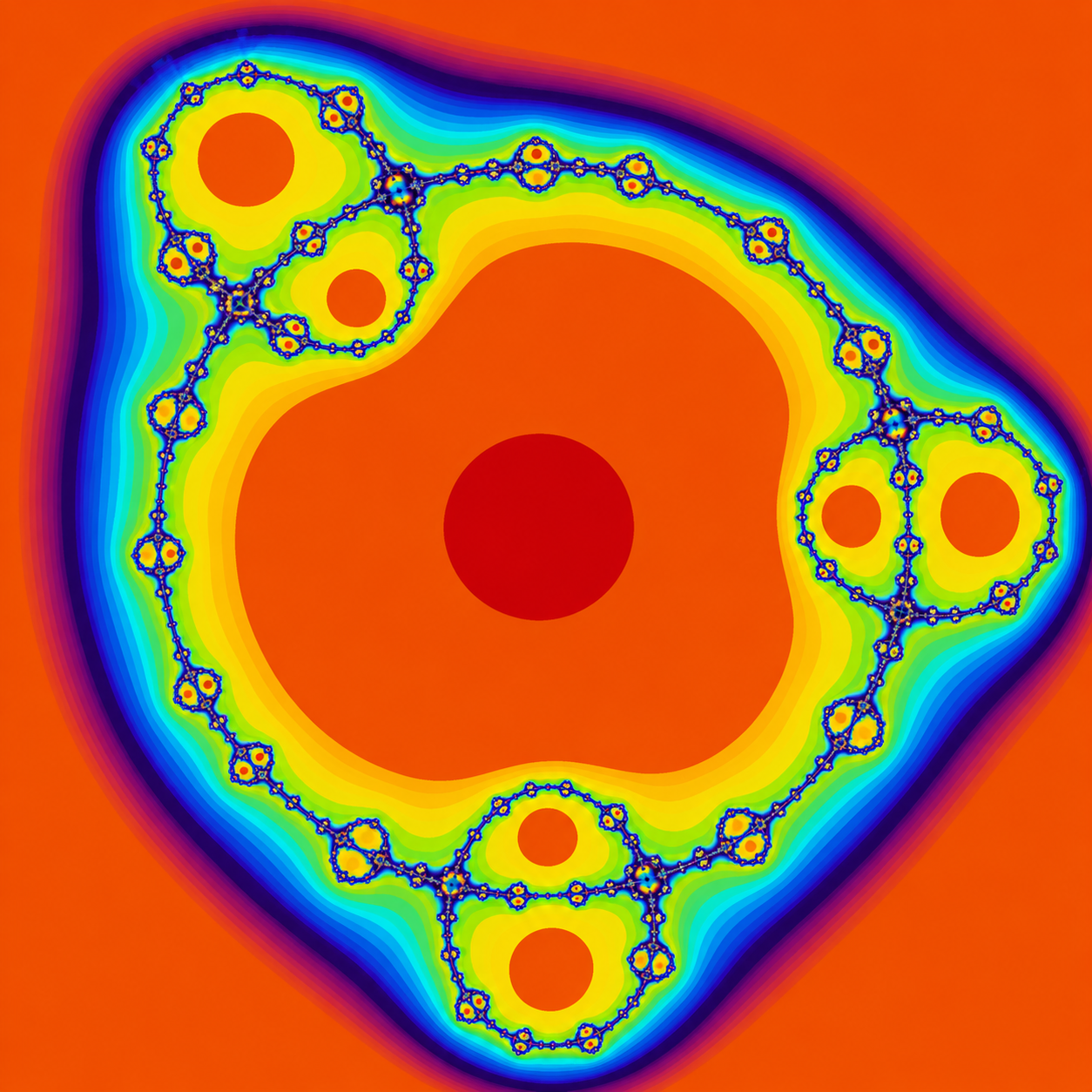}
\caption{The Julia set of   $B_{\mu\kappa}$ for $m=3$, $\kappa=(8,3,2,2)$, and $\mu \in \Delta$ with $a_1 = 1.2$, $a_2=1.2e^{2\pi i /3}$ and $a_3 =1.1e^{3\pi i /2}.$ The pre-image of $\s$ under $B_{\mu \kappa}$ contains $\s$ and three simple closed curves $\Gamma_j, j=1,2,3,$ each enclosing $a_j$ and $1/\overline{a}_j$, and intersecting $\s$ at two critical points of $f_{\mu \kappa}=B_{\mu \kappa}|_\s$.  }
 \label{fig:julia}
\end{figure}

\begin{proof} Let $c\in \s$ be a critical point of
$B$, and let $v\in \mathbb S^1$ be its critical value. Let $J \subset \s$ 
be a small open arc containing $v$. Since $c$ is a
quadratic critical point, if $\hat D_0$ is a sufficiently small disk
centered at $c$, we can write
$$
B^{-1}(J)\cap \hat D_0\;=\;\alpha \cup \beta \ ,
$$
where $\alpha\subset \s$ and $\beta$ are analytic
arcs, $\alpha \cap \beta =\{c\}$ and $\alpha$ and $\beta$ meet at
right angles at $c$. Let $\beta'$ be the part of $\beta$ lying
inside the open unit disk $\mathbb{D}.$ Using the monodromy theorem, we can
analytically continue $\beta'$ away from $c$ inside $ \mathbb{D}$,
without ever reaching a critical point of $B$ inside the disk,
because no such critical point is mapped to $\s$.
The maximal arc $\gamma \subset \mathbb{D}$ obtained in this way
must therefore hit the boundary $\s$ again, at
another point $c'$ distinct from $c$. The point $c'$ is necessarily
a critical point of $B$. If we join $\gamma$ to its symmetric image
(under inversion) outside $\mathbb{D}$, we get a Jordan curve
$\Gamma$ intersecting $\s$ at $\{c, c' \}$.  Now let
$D$ be the topological open disk in the plane bounded by $\Gamma$. Note
that $D$ is symmetric under inversion about $\s$,
and therefore it does {\it not\/} contain the origin. Since $B$ is
an open map, both $D \cap \mathbb{D}$ and $D\cap (\mathbb{C}\setminus
\overline{\mathbb{D}})$ map {\it onto\/} a component of
$\overline{\mathbb{C}}\setminus \s$, and they map onto
distinct components. Hence, one contains a pre-image of $0$, the
other a pre-image of $\infty$. By symmetry, we deduce that there
exists $1 \leq j \leq m$ such that $a_j \in D \cap (\mathbb{C}\setminus
\overline{\mathbb{D}})$ and $1 / \overline {a}_j \in D \cap \mathbb{D}$.

Next, suppose that $\tilde{c} \in \s$ is another
critical point of $B$, distinct from both $c$ and $c'$. Performing
for $\tilde{c}$ the same steps as above, we get another topological
disk $\tilde{D}$ with boundary $\tilde{\Gamma}$. The curves $\Gamma$
and $\tilde{\Gamma}$ are disjoint: indeed, if they crossed
somewhere, it would have to be outside $\s$, and the
crossing point would be a critical point of $B$ with critical value
in $\s$, but that is impossible. Therefore, either
$D \cap \tilde{D}=\emptyset$ or else one of these disks contains the
other, say $D\subset \tilde{D}$. To rule out the latter, let
$U=D\cap \mathbb{D}$ and $V=(\tilde{D}\cap \mathbb{D})\setminus U$.
Then both $U$ and $V$ are (adjacent) topological disks whose
boundaries are mapped onto $\s$. Since $U$ is mapped
onto $\mathbb{C}\setminus \overline{\mathbb{D}}$, we see that $V$ is
mapped onto $\mathbb{D}$. In particular, $V$ contains a pre-image of
$0$, which is impossible because $V \subset \mathbb{D}$ (and $0 \notin
V$). We deduce that the closures of the disks $D$ and $ \tilde{D}$ are disjoint.

From these arguments, it follows that $B^{-1}(\s)$
contains $m$ Jordan curves that are pairwise disjoint and meet $\s$ transversely at two critical points each. Each
such curve surrounds a pair of points of the form $\{a_j,
1 / \overline{a}_j\}$. Since there are exactly $m$ such pairs, we
can label the curves $\Gamma_1, \ldots, \Gamma_m$ and the
disks they bound $D_1, \ldots, D_m$, so that $D_j \supset  \{a_j,
1 / \overline{a}_j\}$. We have thus
\begin{equation}\label{uniq2}
B^{-1}(\s)\;\supseteq\; \s\,\cup
\,\bigcup_{j=1}^m \Gamma_j \ .
\end{equation}

We need to prove that this inclusion is, in fact, an equality. This is done by counting preimages of a regular value on the circle. Let $w\in \mathbb S^1$ be a regular value of $B$. Since the restriction $B|_{\mathbb S^1}$ has degree $d$, the point $w$ has exactly $d$ preimages on $\mathbb S^1$, counted with multiplicity.
On the other hand, for each $j=1,\dots,m$, the restriction
\[
B|_{D_j\cap \mathbb D}: D_j\cap \mathbb D \to \overline{\mathbb C}\setminus \overline{\mathbb D}
\]
is a branched covering of degree $k_j$, branched only at $1/\overline{a_j}$, while
\[
B|_{D_j\cap (\overline{\mathbb C}\setminus \overline{\mathbb D})}: D_j\cap (\overline{\mathbb C}\setminus \overline{\mathbb D}) \to \mathbb D
\]
is a branched covering of degree $k_j$, branched only at $a_j$. Therefore each of the two pieces $D_j\cap \mathbb D$ and $D_j\cap (\overline{\mathbb C}\setminus \overline{\mathbb D})$ contributes exactly $k_j$ preimages of $w$, counted with multiplicity. Summing over $j$, we find that the set
\[
\mathbb S^1\cup \Gamma_1\cup\cdots\cup \Gamma_m
\]
already contains
\[
d+2\sum_{j=1}^m k_j
\]
preimages of $w$, counted with multiplicity.

Now, by definition,
\[
d=k_0-\sum_{j=1}^m k_j
\qquad\text{and}\qquad
d_B=\sum_{j=0}^m k_j,
\]
hence
\[
d+2\sum_{j=1}^m k_j
=
k_0-\sum_{j=1}^m k_j+2\sum_{j=1}^m k_j
=
k_0+\sum_{j=1}^m k_j
=
d_B.
\]
Since $d_B$ is the degree of $B$ on $\overline{\mathbb C}$, this accounts for all preimages of the regular value $w$. Therefore no additional components of $B^{-1}(\mathbb S^1)$ can exist, and we conclude that
\[
B^{-1}(\mathbb S^1)=\mathbb S^1\cup \Gamma_1\cup\cdots\cup \Gamma_m.
\]
This proves the lemma.
\end{proof}

Proposition~\ref{Riemann} implies that  the critical points 
of $f_{\mu\kappa}$  depend analytically on $\mu \in \Delta$ 
and, as a corollary of Lemma~\ref{toplemma2}, we conclude that they are analytic functions which are globally well defined and labeled according to the counterclockwise order on $\s$ in each connected component of $\Delta$. Later on, this fact will be fundamental to guarantee a diffeomorphic correspondence between parameters in a connected component of $\Delta$ and vectors of turning points in an appropriate simplex. As a consequence of
this, we will show that $\Delta$ is simply connected.

\begin{corollary}\label{label} 
For $\mu$ and $\kappa$  fixed,  if $\textrm{cc}(\Delta)$ is a
connected component of $\Delta$, then the $2m$ critical points
of $f_{\mu \kappa}$ are well-defined analytic functions 
$c_1, \ldots, c_{2m}: \textrm{cc}(\Delta) \to \s$, and can be labeled according to the cyclic order on $\s$, satisfying $c_{2j-1}< c_{2j} \in \Gamma_j, \forall j=1,\ldots,m$. Moreover, $c_{2j-1}$ and $c_{2j}$ are, respectively, points of maximum and minimum according to the cyclic order on $\s$. 
\end{corollary}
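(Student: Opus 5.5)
Three things have to be established on a fixed connected component $\textrm{cc}(\Delta)$: the critical points vary analytically, they can be labelled globally in cyclic order, and the labelling is compatible with the curves $\Gamma_j$ and the max/min pattern.

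\emph{Local analyticity.} For $\mu\in\textrm{cc}(\Delta)$, the critical points of $f_{\mu\kappa}$ are the zeros of $F'_\mu$ in \eqref{efelinha}. This function is real-analytic in $(t,\mu)$. By Proposition~\ref{Riemann} each of the $2m$ zeros is quadratic for $B$, so it is a simple zero of $F'_\mu$, i.e. $F''_\mu\neq 0$ there. The implicit function theorem then gives $2m$ local analytic branches near any $\mu_0$.

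\emph{Global labelling.} The zeros cannot merge or escape while $\mu$ stays in $\Delta$, since their number is exactly $2m$ throughout. Hence the unordered set of zeros is a $2m$-sheeted covering of $\textrm{cc}(\Delta)$ by analytic branches. Simplicity and the implicit function theorem also preserve the cyclic order along paths. I will fix the labelling with the curves of Lemma~\ref{toplemma2}. Each $\Gamma_j$ is the unique component of $B^{-1}(\s)\setminus\s$ whose disk $D_j$ contains $a_j$ and $1/\overline{a}_j$. Set $\{c_{2j-1},c_{2j}\}=\Gamma_j\cap\s$. Along any path in $\textrm{cc}(\Delta)$ the curves $\Gamma_j$ move continuously, since they are analytic continuations of transverse preimage arcs. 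The labels $j$ are tied to the $a_j$, which are coordinates of $\mu$. So no monodromy can exchange the pairs belonging to different $j$.

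It remains to order the two points inside each pair. Traversing $\s$ counterclockwise, $F'_\mu$ changes sign at each critical point. So the points alternate between local maxima ($F'$ going from $+$ to $-$) and local minima. I call $c_{2j-1}$ the maximum and $c_{2j}$ the minimum in $\Gamma_j\cap\s$. This is well defined and continuous, because a simple zero keeps the sign of $F''$ along the branch. I must still check that the two points of $\Gamma_j\cap\s$ are one maximum and one minimum, and that they are cyclically adjacent ($c_{2j-1}<c_{2j}$ with no other critical point between them).

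\emph{Main obstacle: one max and one min, adjacent.} The closures of the $D_j$ are pairwise disjoint and symmetric. So $D_j\cap\s$ is a single arc $I_j$ with endpoints $c_j',c_j''$, and these arcs are pairwise disjoint. Consequently, the only critical points of $f$ in $\overline{I_j}$ are its endpoints, which makes the two endpoints adjacent. On $I_j$ the map $f$ is monotone, and $B$ maps $D_j\cap\mathbb D$ and $D_j\setminus\overline{\mathbb D}$ to opposite sides of $\s$.

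To identify the derivative sign, I would compare with the explicit configuration of Lemma~\ref{multimodal}. There, with $r_j$ close to $1$, $F'(\eta_j)<0$ and $F'(s_n)>0$. So near $\eta_j$ there is a decreasing arc bounded by a maximum on its counterclockwise-first end and a minimum on its second. This arc contains $z_j$, the radial projection of $a_j$, and $I_j$ is the arc of $\s$ cut off by $D_j\ni a_j$. The sign of $F'$ on $I_j$ is constant along $\textrm{cc}(\Delta)$ by continuity. Then $F'<0$ on $I_j$, so the counterclockwise-first endpoint of $I_j$ is a maximum and the second a minimum.

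For components not containing such a configuration, I would argue directly. Near the endpoints of $I_j$, $\Gamma_j$ is orthogonal to $\s$, and $B(D_j\cap\mathbb D)$ lies outside $\overline{\mathbb D}$. By the local quadratic model at a critical point, this orientation forces $f$ to be decreasing on $I_j$ (the same local picture as at $\eta_j$). I expect this direct orientation argument to be the delicate step. The fallback is to argue that every component of $\Delta$ meets the region of Lemma~\ref{multimodal}, but that is not yet available at this point of the paper.
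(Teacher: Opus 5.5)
Your treatment of analyticity (simple zeros of $F'_\mu$ plus the implicit function theorem) matches the paper. So does the global labelling via the curves $\Gamma_j$ of Lemma~\ref{toplemma2}, which are tied to the coordinates $a_j$ of $\mu$, and the adjacency argument via the pairwise disjoint arcs $I_j=D_j\cap\s$. The gap is the final assertion: you never actually prove that $f_{\mu\kappa}$ is decreasing on $I_j$.

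Your main route carries the sign over from the configuration of Lemma~\ref{multimodal} by continuity. That only covers components of $\Delta$ that meet that region, which is not known. You would also still have to identify the decreasing arc around $\eta_j$ found there with $I_j$. Your fallback is left as an expectation, and it looks in the wrong place: neither the orthogonality of $\Gamma_j$ nor the local quadratic model at the endpoints is needed.

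The missing step is an orientation argument at \emph{regular} points of $I_j$, and this is what the paper uses. The paper phrases it as follows: $B$ maps $D_j\setminus\overline{\mathbb D}$ onto $\mathbb D$ as a degree-$k_j$ branched covering, and $I_j$ lies on its boundary; compare the proof of Lemma~\ref{lem:gap-kj}.

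Let $p$ be an interior point of $I_j$. Then $B'(p)\neq 0$, so $B$ is an orientation-preserving local diffeomorphism near $p$. A small half-disk at $p$ on the $\mathbb D$ side lies in $D_j\cap\mathbb D$, so it is mapped into $\overline{\mathbb C}\setminus\overline{\mathbb D}$. Traversing $\s$ counterclockwise near $p$, this half-disk is on the left, so its image lies to the left of the image path in $\s$. Since that image is outside the unit disk, the image path runs clockwise, i.e.\ $F'_\mu<0$ on $I_j$.

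Equivalently, by the Cauchy--Riemann equations, $\partial_\rho \log|B(\rho e^{2\pi i t})|$ at $\rho=1$ equals $F'_\mu(t)$. This is $\le 0$ on $I_j$ because $|B|<1$ on $D_j\setminus\overline{\mathbb D}$, and it is nonzero because $I_j$ contains no critical points in its interior.

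Since $F'_\mu$ changes sign at each simple zero, the counterclockwise-first endpoint of $I_j$ is a local maximum and the other endpoint a local minimum. This holds for every $\mu\in\Delta$, on every component, with no appeal to Lemma~\ref{multimodal} and no continuity argument along paths.
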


\begin{proof} First remember that 
 $\mu=(\eta_0, \,a_1,\, r_2, \,\eta_2, \ldots, r_m,\, \eta_m) \in \R^{2m}$ satisfies: 
$\eta_0\in \R, a_1 >1 $ and  $a_j = r_je^{2\pi i \eta _j},$ where 
$r_j > 1$ and $\eta_j \in \R$,  for $j=2,\ldots,m \, .$  
According to Lemma~\ref{toplemma2}, if $\mu \in \Delta$, the inverse image
$B_{\mu \kappa}^{-1}(\s)$ contains $m$ Jordan curves. Each of them meets
$\s$ transversely at two critical points of $f_{\mu \kappa}$ and
bounds a topological disk 
which contains the points $\{a_j,
1 / \overline {a}_j\}$.  Since there are exactly $m$ such pairs of points, we
can label the curves  $\Gamma_1(\mu), \ldots, \Gamma_m(\mu)$ and the
disks they bound $D_1(\mu), \ldots, D_m(\mu)$, so that 
$D_j(\mu) \supset  \{a_j, 1 / \overline{a}_j\}$. This implies that the $2m$ critical points given
by the intersections
$\s \cap \Gamma_1(\mu), \ldots, \s \cap \Gamma_m(\mu)$ are 
globally defined analytic functions in $cc(\Delta)$. Hence, they can be globally labeled according to the cyclic order on $\s$ as
$c_1(\mu) < c_2(\mu)< \cdots < c_{2m-1}(\mu)< c_{2m}(\mu)$ so that  $c_{2j-1}$ and $c_{2j}$ are contained in $\Gamma_j$ for every $j=1,\ldots, m$. 

The last statement of the corollary follows from the facts that 
the arc  $[c_{2j-1}(\mu), c_{2j}(\mu)] \subset \Gamma_j$ is part of the boundary of the disk $D_j \setminus \mathbb{D}$ and  $B_{\mu \kappa}$ maps this disk onto the unit disk as a branched covering of degree $k_j$.
\end{proof}

According to Corollary \ref{label}, for $\mu$ in each connected component  of $\Delta$, the label 
$c_1(\mu) < c_2(\mu) < \cdots < c_{2m-1}(\mu) < c_{2m}(\mu)$ 
of the critical points of $f_{\mu \kappa}$ is chosen 
in such a way that 
$c_{2j-1}(\mu) < c_{2j}(\mu)$ are the points in  $\s \cap \Gamma_j(\mu)$.
Moreover, $c_{2j-1}(\mu)$ and $c_{2j}(\mu)$ are, respectively,  points of maximum and minimum according to the cyclic order on the circle.

\begin{pro}\label{prop:lifts}
Let $\mathrm{cc}(\Delta)$ be a connected component of $\Delta$.  
Let 
$c_1(\mu)<\dots <c_{2m}(\mu)\in\mathbb{S}^1, \mu \in cc(\Delta),$ denote the critical points of 
$f_{\mu\kappa}=B_{\mu\kappa}|_{\mathbb{S}^1}$, labeled according to their cyclic order as in Corollary \ref{label}.
Then each $c_i(\mu)$ admits a globally defined continuous (indeed, analytic) lift
$$
C_i:\mathrm{cc}(\Delta)\longrightarrow\mathbb{R}, \qquad
e^{2\pi i\, C_i(\mu)}=c_i(\mu),
$$ satisfying $C_1(\mu) < C_2(\mu)< \ldots <C_{2m}(\mu) < C_1(\mu)+1,$ for every $\mu \in cc(\Delta).$ 
In particular, no lift undergoes an integer jump when the parameter $\mu$ traverses a loop in $\mathrm{cc}(\Delta)$.
\end{pro}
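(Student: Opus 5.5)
The plan is to build one global lift by hand, anchoring $c_1(\mu)$ to the parameter $a_1$, which by normalization always lies on the positive real axis. All other lifts will then be obtained from the cyclic order. The danger in the statement is monodromy: $\mathrm{cc}(\Delta)$ is not yet known to be simply connected, so a loop in parameter space might make $c_1(\mu)$ wind around $\s$. I will rule this out by a geometric normalization rather than by topology of $\Delta$.

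\emph{Step 1: the disk $\overline{D}_1(\mu)$ avoids $0$.} By Lemma~\ref{toplemma2} and Corollary~\ref{label}, for $\mu\in\mathrm{cc}(\Delta)$ the closed disk $\overline{D}_1(\mu)$ is compact and simply connected. It contains $a_1\in(1,\infty)$ and both $c_1(\mu)$ and $c_2(\mu)$. It is bounded and symmetric under $z\mapsto 1/\overline z$, so it cannot contain $0$, since it would then contain $\infty$. Hence $\arg$ admits a continuous branch on $\overline{D}_1(\mu)$. I normalize this branch by $\arg(a_1)=0$, and set
\[
C_1(\mu)=\tfrac{1}{2\pi}\arg\big(c_1(\mu)\big).
\]
Equivalently, $2\pi C_1(\mu)$ is the total change of argument along any path in $\overline{D}_1(\mu)$ from $a_1$ to $c_1(\mu)$.

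\emph{Step 2: continuity of $C_1$.} Fix $\mu_0$. The curve $\Gamma_1(\mu)$ is obtained by analytic continuation, as in the proof of Lemma~\ref{toplemma2}, so it moves continuously in $\mu$ near $\mu_0$. Likewise $c_1(\mu)$ and $a_1(\mu)$ move analytically. Choose a path $\gamma_0$ from $a_1(\mu_0)$ to $c_1(\mu_0)$ inside $\overline{D}_1(\mu_0)$, staying at distance $\geq\delta>0$ from $0$; such a $\delta$ exists by compactness. For $\mu$ close to $\mu_0$, concatenate three pieces: a short segment from $a_1(\mu)$ to $a_1(\mu_0)$, then $\gamma_0$, then a short segment from $c_1(\mu_0)$ to $c_1(\mu)$. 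The result is a path from $a_1(\mu)$ to $c_1(\mu)$. It is homotopic, in $\C\setminus\{0\}$, to a path inside $\overline{D}_1(\mu)$, because the symmetric difference of the two disks lies in a small neighbourhood of $\Gamma_1(\mu_0)$, which stays far from $0$. Hence the argument change along it differs from $2\pi C_1(\mu_0)$ by a quantity tending to $0$ as $\mu\to\mu_0$. Since $C_1(\mu)$ is determined modulo $\Z$ by $c_1(\mu)$, it is continuous. Locally it agrees with an analytic branch of $\frac1{2\pi i}\log c_1(\mu)$, so it is analytic.

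\emph{Step 3: the remaining lifts.} Define
\[
C_i(\mu)=C_1(\mu)+\ell_i(\mu),
\]
where $\ell_i(\mu)\in(0,1)$ is the normalized length of the counterclockwise arc from $c_1(\mu)$ to $c_i(\mu)$. This quantity is continuous, and indeed analytic, because the $2m$ critical points remain distinct throughout $\Delta$. By the labeling of Corollary~\ref{label}, the arcs are increasing in $i$. This gives
\[
C_1(\mu)<C_2(\mu)<\cdots<C_{2m}(\mu)<C_1(\mu)+1
\]
for every $\mu$, and clearly $e^{2\pi i C_i(\mu)}=c_i(\mu)$. Since every $C_i$ is a genuine real-valued function on $\mathrm{cc}(\Delta)$, no lift can jump by an integer along a loop.

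The main obstacle is Step~2: making rigorous that the normalized argument branch on $\overline{D}_1(\mu)$ varies continuously. This reduces to the continuous dependence of $\Gamma_1(\mu)$ on $\mu$ together with a uniform distance from $0$, both of which come from the construction in Lemma~\ref{toplemma2}.
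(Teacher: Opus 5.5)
Your proposal is correct and is essentially the paper's own argument. The paper likewise fixes, for each $\mu$, the branch of $\log$ on $D_1(\mu)$ normalized by $\log a_1(\mu)=\ln a_1(\mu)\in\R$, which is legitimate because $a_1>1$ and $0\notin\overline{D}_1(\mu)$. It takes continuity from the continuous dependence of $\Gamma_1(\mu)$ on $\mu$, just as in your Steps 1--2; the right reason for your homotopy there is that $\overline{D}_1(\mu_0)$ does not separate $0$ from $\infty$, so a small neighbourhood of it lies in a slit plane, not merely that it stays far from $0$.

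The one difference is the remaining lifts. You obtain $C_2,\dots,C_{2m}$ from $C_1$ by adding normalized counterclockwise arc lengths, which yields $C_1<\cdots<C_{2m}<C_1+1$ at once. The paper only asserts that the same argument applies to each $\Gamma_j$, which for $j\ge2$ would require anchoring at the real coordinate $\eta_j$ and an extra integer shift to secure the ordering, so your Step~3 is the tidier route.
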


\begin{proof}
By Lemma~\ref{toplemma2}, for each $\mu\in\mathrm{cc}(\Delta)$ the preimage  $B_{\mu\kappa}^{-1}(\mathbb{S}^1)$ is the union of $\mathbb{S}^1$ with $m$ analytic Jordan curves 
$\Gamma_j(\mu)$, where $\Gamma_1(\mu)$ bounds a disk $D_1(\mu)$ containing $a_1(\mu)$.  
Since the family $B_{\mu\kappa}$ depends analytically on $\mu$ and the critical points in $\Gamma_j(\mu)$ are nondegenerate,  
the curves $\Gamma_j(\mu)$, the disks $D_j(\mu)$, and the critical points 
$c_i(\mu)=\Gamma_j(\mu)\cap\mathbb{S}^1$ all depend analytically on $\mu$.

Because $a_1(\mu)$ is real and positive for all $\mu\in\mathrm{cc}(\Delta)$, we may select on each disk $D_1(\mu)$ the unique holomorphic branch of the logarithm satisfying
\[
\log_{D_1(\mu)}(a_1(\mu))=\ln(a_1(\mu))\in\mathbb{R}.
\]
This branch varies continuously with $\mu$ and induces continuous argument functions on the two points of 
$\Gamma_1(\mu)\cap\mathbb{S}^1$.  
Along any loop $\gamma:[0,1]\to\mathrm{cc}(\Delta)$ with $\gamma(0)=\gamma(1)$, the chosen branch 
$\log_{D_1(\gamma(t))}$ begins and ends at the same value, since $a_1(\gamma(t))>1$ for all $t$.  
Hence, the arguments of the two corresponding critical points return to their initial values. In particular,  
they cannot change by a multiple of $2\pi$.

This proves that each of the two critical points arising from $\Gamma_1(\mu)$ admits a global continuous lift 
$C_i(\mu)\in\mathbb{R}$, uniquely determined by the normalization above.  
The same argument applies to the remaining curves $\Gamma_j(\mu)$ after fixing the 
labeling of critical points on $\mathbb{S}^1$. Hence, all $2m$ critical points admit global analytic lifts.
\end{proof}

Fix $\kappa = (k_0, k_1,\ldots, k_m)$ and the connected component $cc(\Delta) \subset \Delta$. Consider the lifts $C_1(\mu) < \ldots < C_{2m}(\mu)< C_1(\mu)+1, \mu \in cc(\Delta),$ of the critical points of $f_{\mu \kappa}$, as in Proposition \ref{prop:lifts}. Recall that $C_{2j-1}(\mu)< C_{2j}(\mu)$ are points of maximum and minimum of $F_{\mu \kappa}$, respectively, for every $j=1,\ldots, m$. The next lemma guarantees that
$F_{\mu \kappa}(C_{2j-1}(\mu) )- F_{\mu \kappa}(C_{2j}(\mu) ) $
can be as close to $k_j$ as we wish, but not larger than or equal to $k_j$. 

\begin{lem}\label{lem:gap-kj}
Let $\mu\in cc(\Delta) \subset \Delta$ and let $F_{\mu\kappa}:\mathbb{R}\to\mathbb{R}$ be a lift 
of $f_{\mu\kappa}$ with respect to $\Pi(t)=e^{2\pi i t}$. Let
$
 C_1(\mu)<C_2(\mu)<\cdots<C_{2m}(\mu)<C_1(\mu)+1
$
be the critical points of $F_{\mu\kappa}$ as in Corollary~\ref{label}. 
Then, for each $j=1,\ldots,m$, we have
$
 0 \;<\; 
 F_{\mu\kappa}(C_{2j-1}(\mu)) - F_{\mu\kappa}(C_{2j}(\mu))
 \;<\; k_j.
$
\end{lem}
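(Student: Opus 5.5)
The lower bound $0<F(C_{2j-1})-F(C_{2j})$ is immediate: by Corollary~\ref{label}, $C_{2j-1}$ is a local maximum and $C_{2j}$ the adjacent local minimum of $F=F_{\mu\kappa}$. Since there are no critical points strictly between them, $F$ is strictly decreasing on $[C_{2j-1},C_{2j}]$.

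For the upper bound I will use the geometry of Lemma~\ref{toplemma2}. Set $D_j^{\rm out}:=D_j\cap(\overline{\mathbb C}\setminus\overline{\mathbb D})$. Its boundary consists of the arc $A:=\{e^{2\pi i t}: t\in[C_{2j-1},C_{2j}]\}\subset\mathbb S^1$ together with the arc $\Gamma_j^{\rm out}:=\Gamma_j\setminus\overline{\mathbb D}$. Both arcs are mapped by $B$ into $\mathbb S^1$. As recorded in the proof of Lemma~\ref{toplemma2}, $B\colon D_j^{\rm out}\to\mathbb D$ is a branched covering of degree $k_j$, branched only at $a_j$, and $a_j$ has local degree $k_j$.

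Hence $D_j^{\rm out}$ is a disk whose boundary is mapped to $\mathbb S^1$ as a covering of degree $k_j$. This gives the identity
\[
\bigl(F(C_{2j})-F(C_{2j-1})\bigr) + \Delta_{\Gamma}\arg B \;=\; \pm k_j ,
\]
where $\Delta_\Gamma\arg B/2\pi$ is the total change in the lifted argument of $B$ along $\Gamma_j^{\rm out}$, traversed from $c_{2j}$ back to $c_{2j-1}$. The sign is fixed by orientation: the boundary is traversed so that the degree is positive.

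The key point is that $B$ restricted to $\Gamma_j^{\rm out}$ has no critical points. Indeed, any critical point of $B$ in $\mathbb C\setminus\overline{\mathbb D}$ off the $a_j$'s would have critical value off $\mathbb S^1$, and $a_j\notin\Gamma_j$. So $\arg B$ is strictly monotone along $\Gamma_j^{\rm out}$. By the boundary orientation of the covering, this monotonicity is in the direction that contributes positively to the degree, while along $A$ the map $F$ decreases. More precisely, the boundary map $\partial D_j^{\rm out}\to\mathbb S^1$ is a degree-$k_j$ map with exactly two turning points, $c_{2j-1}$ and $c_{2j}$. On one boundary arc it moves forward, on the other backward. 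Writing $x:=F(C_{2j-1})-F(C_{2j})>0$ for the backward travel, the forward travel is $k_j+x$.

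That alone does not bound $x$, so I also need local information at the corners. At each $c_i$ the curves $\mathbb S^1$ and $\Gamma_j$ meet at right angles, and $B$ is locally $z\mapsto z^2$. Of the four sectors at $c_i$, the one inside $D_j^{\rm out}$ is mapped into $\mathbb D$. I expect this to force the image of $\partial D_j^{\rm out}$ to turn back at each corner, which is consistent with the picture above but still does not bound $x$.

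So the real argument must exclude $x\ge k_j$ by another route. Here is the route I would try first. Suppose $x\ge k_j$. Then the image of $A$ alone covers $\mathbb S^1$ at least $k_j$ times. Pick a regular value $w$ with $B^{-1}(w)\cap A$ of cardinality at least $k_j$, and in fact greater than $k_j$ generically once $x\ge k_j$. I would then analytically continue the preimages of the radial segment from $w$ toward $0$ into $D_j^{\rm out}$ and back out again.

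I expect a cleaner route, though, via the argument principle on $\partial D_j^{\rm out}$ applied to $B-w$ for $w\in\mathbb D$ near $\mathbb S^1$. The number of preimages of $w$ in $D_j^{\rm out}$ is exactly $k_j$. On the other hand, each point of $A$ mapping to $w/|w|$ has a nearby preimage of $w$ in $D_j^{\rm out}$. This is because $F$ is strictly decreasing on $A$, so $B$ is a local diffeomorphism there, and by symmetry it maps the outside side of $A$ into $\mathbb D$. Those preimages are distinct. Consequently $\#(B^{-1}(w/|w|)\cap A^\circ)\le k_j$ for every regular $w/|w|$. Since $F(A)$ is an interval of length $x$, a length $x\ge k_j$ would produce some value attained more than $k_j$ times in the interior, unless $x=k_j$ exactly with coincident endpoints. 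In the borderline case the endpoints $c_{2j-1},c_{2j}$, which are critical, would also contribute a preimage in the closure, and local degree $2$ at each corner would give the extra count. This counting step, especially the borderline equality $x=k_j$, is the main obstacle, and I would handle it by perturbing $w$ slightly into $\mathbb D$ near the common critical value.
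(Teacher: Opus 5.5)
Your final counting argument is correct, and it is a genuinely different route from the paper's. Write $x:=F(C_{2j-1})-F(C_{2j})$.

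\textbf{The case $x>k_j$.} There is some $w_0\in\s$ with at least $k_j+1$ preimages on the open arc $A^\circ$. These are non-critical points, and their exterior sides lie in $D_j^{\mathrm{out}}$. So $w=(1-\epsilon)w_0$ has at least $k_j+1$ distinct preimages in $D_j^{\mathrm{out}}$, which contradicts $\deg\bigl(B|_{D_j^{\mathrm{out}}}\bigr)=k_j$.

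\textbf{The case $x=k_j$.} Here the two critical values coincide, say both equal $v$, and $A^\circ$ carries only $k_j-1$ preimages of $v$. The two missing preimages come from the corners, and the count there should be stated precisely. If $U$ is a small disk about $c_i$, then $B|_U$ is a degree-$2$ branched cover. The two sectors of $U\setminus(\s\cup\Gamma_j)$ lying over $\mathbb D$ are each mapped homeomorphically onto $B(U)\cap\mathbb D$. One of them is the corner of $D_j^{\mathrm{out}}$; the other lies in $\mathbb D\setminus\overline{D_j}$. So each corner contributes exactly one preimage in $D_j^{\mathrm{out}}$ of $w\in\mathbb D$ near $v$. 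That gives $(k_j-1)+2=k_j+1$ preimages and closes the case as you anticipated.

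\textbf{The argument-principle route.} The reason you give for abandoning this route is false, and that route is exactly the paper's proof. The boundary map $\partial D_j^{\mathrm{out}}\to\s$ has no turning points at all.
\begin{itemize}
\item In the positive orientation of $\partial D_j^{\mathrm{out}}$, the arc $A$ is run clockwise, from $c_{2j}$ to $c_{2j-1}$. So $\arg B$ \emph{increases} along $A$, by $2\pi x$. It is $F$, in the counterclockwise parametrization, that decreases.
\item At a corner, $B(z)=v+\alpha(z-c_i)^2+\cdots$ sends the two edges of the right-angle sector to opposite rays. The image therefore passes straight through $v$; it does not turn back. Turning back happens only along $\s$, whose two half-tangents at $c_i$ go to the same ray. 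This is the same local fact you need in your borderline case.
\end{itemize}
Combine this with the strict monotonicity on $\Gamma_j^{\mathrm{out}}$ that you correctly observed. Then $\arg B$ is strictly increasing along the whole positively oriented boundary. So $x+\Delta_\Gamma/(2\pi)=k_j$, where $\Delta_\Gamma>0$ is the increase of $\arg B$ along $\Gamma_j^{\mathrm{out}}$, and $0<x<k_j$ follows at once. Your displayed identity traverses the boundary negatively, so both of its summands are negative and its right-hand side is $-k_j$.

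\textbf{Comparison.} The paper's version is shorter. It also produces the identity $x=k_j-\Delta\gamma_{j,2}/(2\pi)$, which is reused in Lemma~\ref{lem:collapse}. Your version needs only the degree of $B|_{D_j^{\mathrm{out}}}$ and local information along $A$. The cost is a separate treatment of the equality case $x=k_j$.
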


\begin{proof}
As in Lemma~\ref{toplemma2}, let $D_j(\mu)$ be the topological disk bounded by 
$\Gamma_j(\mu)$ and containing $a_j(\mu)$ and $1/\overline{a}_j(\mu)$. 
The restriction of $B_{\mu\kappa}$ to
$
 \hat D_j(\mu):=
 D_j(\mu)\cap\{\,|z|> 1\,\}
$
is a holomorphic map
$
B_{\mu\kappa}:\hat D_j(\mu)\to\mathbb{D}
$
of degree $k_j$, branched only at $a_j(\mu)$. In particular,
if we orient the boundary $\hat \Gamma_j(\mu):=\partial \hat D_j(\mu)$ positively, then the argument of 
$B_{\mu\kappa}$ strictly increases by $2\pi k_j$ along one full turn around $\hat \Gamma_j(\mu)$.

We decompose $\hat \Gamma_j(\mu)
 =
 \gamma_{j,1}(\mu) \cup \gamma_{j,2}(\mu),
$
where $\gamma_{j,1}(\mu)\subset \s$ is the arc of the unit circle from $c_{2j}(\mu)$ to $c_{2j-1}(\mu)$, with the induced orientation from 
  $\partial \hat D_j(\mu)$,
and $\gamma_{j,2}(\mu)$ is the part of $\hat \Gamma_j(\mu)$ lying outside the unit disk,
  joining $c_{2j-1}(\mu)$ to $c_{2j}(\mu)$. Notice that the only critical points of $B_{\mu \kappa}$ in $\gamma_{j,1}(\mu)$ and $\gamma_{j,2}(\mu)$ are the endpoints $c_{2j-1}(\mu)$ and $c_{2j}(\mu)$. Along $\gamma_{j,1}(\mu)$, we have $B_{\mu\kappa}(e^{2\pi i t}) = e^{2\pi i F_{\mu\kappa}(t)}.
$
Notice that the orientation of $\gamma_{j,1}(\mu)$ induced by the positive orientation of $\s$ is opposite to the orientation induced by $\hat \Gamma_j(\mu)$. Thus, the variation of the argument of $f_{\mu\kappa}$ along 
$\gamma_{j,1}(\mu)$ is equal to
\[
 \Delta{\gamma_{j,1}(\mu)} 
 :=
 2\pi(
 F_{\mu\kappa}(C_{2j-1}(\mu))
 -
 F_{\mu\kappa}(C_{2j}(\mu)))>0,
\]

The remaining part $\gamma_{j,2}(\mu)$ is a nontrivial analytic arc
of $\Gamma_j(\mu)$ intersecting $\s$ only at $c_{2j-1}(\mu)$ and $c_{2j}(\mu)$. Its image under $B_{\mu \kappa}$ is contained in $\s$ and the total variation of the argument of $B_{\mu \kappa}$ along $\gamma_{j,2}(\mu)$ is  $\Delta \gamma_{j,2}(\mu)>0$. Since  $\Delta{\gamma_{j,1}(\mu)}+  \Delta \gamma_{j,2}(\mu)=2\pi k_j$, we conclude from the estimates above that $0<F_{\mu\kappa}(C_{2j-1}(\mu))
 -
 F_{\mu\kappa}(C_{2j}(\mu))<k_j.$
\end{proof}

\begin{lem}\label{lem:rj-bounded}
Fix $m\ge1$ and $\kappa=(k_0,\ldots,k_m)$, and let $\Delta$ be the set of
parameters $\mu$ for which $f_{\mu\kappa}$
is $2m$-multimodal. Let $\mathrm{cc}(\Delta)$ be a connected component of
$\Delta$. Then there exists $R>0$ such that
$r_j(\mu)=|a_j(\mu)| \leq R$ for every $\mu \in {\rm cc}(\Delta).$
\end{lem}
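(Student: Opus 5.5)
The plan is to argue by contradiction, using the real-variable expression \eqref{efelinha} for $F'_{\mu\kappa}$ together with the topological description of Lemma~\ref{toplemma2}. Write
\[
F'_{\mu\kappa}(t)=k_0-\sum_{i=1}^m k_i P_i(t), \qquad P_i(t)=\frac{r_i^2-1}{|e^{2\pi i t}-a_i|^2}>0 .
\]
Each $P_i$ is a Poisson kernel: $\int_0^1 P_i\,dt=1$ and
\[
\frac{r_i-1}{r_i+1}\le P_i\le \frac{r_i+1}{r_i-1}.
\]
In particular, $P_i\to 1$ uniformly, together with all its derivatives, as $r_i\to\infty$. Suppose there are $\mu_n\in \mathrm{cc}(\Delta)$ with $r_j(\mu_n)\to\infty$ for some $j$. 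I will pass to a subsequence along which $\eta_0,\eta_i$ (mod $1$) converge and each $r_i(\mu_n)$ converges in $[1,\infty]$. Then I will split the indices into three sets: $I_\infty$, where $r_i\to\infty$; $I_1$, where $r_i\to 1$; and the remaining indices, where $r_i$ converges to a finite limit $>1$.

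The first step uses Lemma~\ref{toplemma2} and Corollary~\ref{label} for $\mu_n$. The turning pair $c_{2j-1}(\mu_n)<c_{2j}(\mu_n)$ lies on $\Gamma_j(\mu_n)$, and $D_j(\mu_n)\cap\mathbb D$ contains $1/\overline a_j(\mu_n)\to 0$. On the arc $[C_{2j-1},C_{2j}]$ we have $F'<0$, and Lemma~\ref{lem:gap-kj} gives a drop of $F$ strictly between $0$ and $k_j$ across it. I want to show that, in the limit, the $j$-th factor cannot be responsible for any turning pair.

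Away from small neighbourhoods of the directions $e^{2\pi i\eta_i}$ with $i\in I_1$, the function $F'_{\mu_n\kappa}$ converges in $C^1$ to
\[
k_0-\sum_{i\in I_\infty}k_i-\sum_{i\notin I_\infty}k_iP_i^{\lim}.
\]
This limit is the derivative of the lift of a Blaschke-type product $B_\infty$ with only $m-|I_\infty|$ factors outside the disk. On the other hand, near each direction $e^{2\pi i\eta_i}$ with $i\in I_1$, $F'$ has a deep negative spike, as in the proof of Lemma~\ref{multimodal}.

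Next I count zeros. Applying the Riemann--Hurwitz argument of Proposition~\ref{Riemann} to $B_\infty$ shows that the limit function has at most $2(m-|I_\infty|)$ sign changes on the circle, counted on the part where it converges. Each spike at $i\in I_1$ contributes exactly two turning points for large $n$, and these turning points lie on $\Gamma_i(\mu_n)$. Hence the total number of turning points of $f_{\mu_n\kappa}$ is at most $2(m-|I_\infty|)<2m$ for large $n$, contradicting $\mu_n\in\Delta$. The bound $R$ on $\mathrm{cc}(\Delta)$ then follows, because the contradiction holds for any sequence escaping to infinity in some $r_j$.

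The main obstacle is the counting step when a spike (an index in $I_1$) or a degenerate zero of the limit $F'$ occurs. A degenerate zero could split into two simple zeros under perturbation, so $C^1$-convergence alone does not bound the number of zeros. I would handle this with Lemma~\ref{toplemma2} rather than analysis of $F'$: turning points come in pairs on the curves $\Gamma_i$, each enclosing $\{a_i,1/\overline a_i\}$. I would therefore try to show that for $r_j$ large, no Jordan curve in $B^{-1}(\s)$ can separate $1/\overline a_j$ from $0$ while staying in $\{|B|>1\}\cap\mathbb D$. The estimate would be that on the circle $|z|=\rho$, for a fixed $\rho<1$ chosen independently of $n$, one has $|B_{\mu_n\kappa}(z)|<1$ once $r_j$ is large. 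If the factor $|(z-a_j)/(1-\overline a_j z)|^{k_j}$ does not tend uniformly to $\rho^{-k_j}$ on this circle, I would fall back on a Hurwitz-type argument for the rational maps $B_{\mu_n\kappa}$ converging to $B_\infty$ on compact sets avoiding the spike directions.
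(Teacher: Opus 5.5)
Your strategy is the paper's: let $r_j\to\infty$, absorb the escaping factors into a power of $z$, bound the modality of the limit $B_\infty$ by Riemann--Hurwitz, and allow two turning points for each direction where some $r_i\to1$. The gap is the counting step you flag as the main obstacle, and it is worse than a technicality.

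First, $C^1$ convergence of $F'_n$ off the spikes does not bound the number of zeros of $F'_n$. Second, a spike need not contribute exactly two turning points; it contributes none if the limit is negative there.

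If $B_\infty$ is non-constant, the count can be rescued by the argument principle for $B_n'$ on small discs $W$ centred at points $\zeta\in\mathbb S^1$. Choose $W$ so that $B_\infty'\neq0$ on $\partial W$; then $B_n'\to B_\infty'$ uniformly on $\partial W$. Inside $W$, for each $i\in I_1$ with $a_i\to\zeta$, the map $B_n'$ has a pole of order $k_i+1$ at $1/\overline a_i$ and a zero of order $k_i-1$ at $a_i$. Hence $W$ contains at most $\ell_\zeta+2\#\{i\in I_1: a_i\to\zeta\}$ other critical points, where $\ell_\zeta=\nu(B_\infty;\zeta)$. Riemann--Hurwitz for $B_\infty$ then bounds the total by $2m-2|I_\infty|$.

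If $B_\infty$ is constant, however, the limit carries no information. This happens exactly when no $r_i$ has a limit in $(1,\infty)$ and $k_0=\sum_{i\in I_\infty}k_i$, equivalently $d=-\sum_{i\in I_1}k_i$. In that case your fallback, a fixed circle $|z|=\rho$ on which $|B_n|<1$, fails as well.

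In fact the statement itself is false in that case. The paper's proof has the same hole: it passes directly from ``$B_\infty$ has fewer factors'' to ``$B_n$ has modality at most $2m-2|J|$''.

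\emph{A degree-zero counterexample.} Take $m=1$ and $\kappa=(k,k)$, so $d=0$. Then $F'=k(1-P_1)$ vanishes exactly where $\cos 2\pi t=1/r_1$, which gives two simple zeros for every $r_1>1$. Hence $\Delta=\mathbb R\times(1,\infty)$ is connected and $r_1$ is unbounded on it. Here $\Gamma_1\cap\mathbb D$ is the arc of the circle $|z-a_1|^2=a_1^2-1$, which passes within $1/a_1$ of the origin. Moreover $|B|>1$ on $\{|z|=\rho,\ \mathrm{Re}\,z>(1+\rho^2)/(2a_1)\}$, so your circle estimate breaks down too.

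\emph{A counterexample with $d\neq0$.} Degree zero is not essential. Take $m=2$ and $k_0=k_2\ge2$, so $d=-k_1$, and set $a_1=r_1$, $a_2=-r_2$. With $\theta=2\pi t$,
\[
F'=\frac{2k_2(1+r_2\cos\theta)}{r_2^2+2r_2\cos\theta+1}-k_1P_1 .
\]
This has signs $-,+,-,+$ at $\theta=0,\pi/2,\pi,3\pi/2$ once $r_1-1<c\,r_2^{-2}$ for a small constant $c=c(k_1,k_2)$. By Proposition~\ref{Riemann} these maps are therefore $4$-modal, on a connected set along which $r_2\to\infty$.

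So the argument can only be completed under an extra hypothesis that rules out a constant limit, for instance $d\ge1$.
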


\begin{proof}
Assume by contradiction that  there exists a non-empty subset $J \subset \{1,\ldots,m\}$ and a sequence $\mu^n, n\in \mathbb{N},$ of parameters in $cc(\Delta)$ such that
$r_j(\mu^n) \to +\infty$ as $n\to\infty$ for every $j\in J$. Denote $a_j^n := a_j(\mu^n)$ and
$B_n := B_{\mu^n\kappa}$, $f_n := f_{\mu^n\kappa} = B_n|_{\mathbb{S}^1}$. Recall that $B_n$ contains the factor $H_n(z):=\Pi_{j\in J}((z-a_j^n)/(1-\overline {a}_j^nz))^{k_j}$, which, up to a subsequence, converges uniformly on compact subsets of $\mathbb C\setminus \{0\}$ to the holomorphic function $H_\infty(z):= \Pi_{j\in J} e^{4\pi i k_j\eta_j^\infty}z^{-k_j},$ where $a_j^n = |a_j^n|e^{2\pi i \eta_j^n}$ and $\eta_j^n \to \eta_j^\infty$ as $n\to \infty$. We may assume for simplicity that $r_i^n\to r_i^\infty \in [1,+\infty)$ and $\eta_i^n \to \eta_i^\infty \in \R / \Z$ as $n\to \infty$ for every $i\notin J$. Hence $B_n(z)$ converges to a Blaschke-type holomorphic function $B_\infty(z)$ with fewer factors, and thus its restriction to $\s$ has less modality. In fact, if  $r_i^\infty=1$ for some  $i\notin J,$ then the corresponding factor $G_{i,n}(z):=((z-a_i^n)/(1-\overline {a}_i^nz))^{k_i}$ converges uniformly to the constant $(-a_i^\infty)^{k_i}$ on compact subsets of $\mathbb C \setminus \{a_i^\infty\}$. We can remove the singularities of $B_\infty$ at $a_i^\infty$ in this case. Thus, for $n$ sufficiently large, where $|a_i^n|>1$, $G_{i,n}$  contributes with at most two critical points on the circle and the corresponding critical points at $a_i^n$ and $1/\overline{a}_i^n$. We conclude that for $n$ sufficiently large, the modality of $B_n$ is at most $2m-2|J|<2m$, a contradiction.  \end{proof}

\begin{lem}\label{lem:collapse}
Fix $j\in\{1,\dots,m\}$ and let $\mu_n\in cc(\Delta), n\in \mathbb N,$ be a sequence of parameters satisfying  $$F_{\mu_n\kappa}(C_{2j-1}(\mu_n)) - F_{\mu_n\kappa}(C_{2j}(\mu_n)) \to k_j$$ as $n\to \infty$.
Then  
$r_j(\mu_n)\to 1$ and $C_{2j-1}(\mu_n)-C_{2j}(\mu_n)\to 0.$ 
Equivalently, the two turning points  
$
c_{2j-1}(\mu_n)=e^{2\pi i C_{2j-1}(\mu_n)}$ and $c_{2j}(\mu_n)=e^{2\pi i C_{2j}(\mu_n)}
$
satisfy  
$
{\rm dist}_{\s}(c_{2j-1}(\mu_n), c_{2j}(\mu_n)) \to 0$ as $n\to \infty$. 
\end{lem}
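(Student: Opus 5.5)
We argue by compactness, using the decomposition of $\partial\hat D_j(\mu)$ from the proof of Lemma~\ref{lem:gap-kj}. By Lemma~\ref{lem:rj-bounded}, all $r_i(\mu_n)$ lie in $(1,R]$. Passing to a subsequence, we may assume that $r_i(\mu_n)\to r_i^\infty\in[1,R]$, $\eta_i(\mu_n)\to\eta_i^\infty$, $\eta_0(\mu_n)\to\eta_0^\infty$ (mod $\Z$), and that the lifts $C_{2j-1}(\mu_n), C_{2j}(\mu_n)$ converge mod $\Z$. It suffices to show that every such subsequence has $r_j^\infty=1$ and $C_{2j-1}-C_{2j}\to 0$.

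\emph{Step 1: $r_j(\mu_n)\to 1$.} Suppose instead that $r_j^\infty>1$. The factor $((z-a_j)/(1-\overline a_j z))^{k_j}$ then converges uniformly near $\overline{D_j}$ and near $\s$. Also, the other factors with $r_i^\infty=1$ converge to constants away from $a_i^\infty$, as in the proof of Lemma~\ref{lem:rj-bounded}. By Lemma~\ref{lem:gap-kj}, $2\pi(F(C_{2j-1})-F(C_{2j}))=2\pi k_j-\Delta\gamma_{j,2}$. So the hypothesis says exactly that the argument variation of $B_n$ along the outer arc $\gamma_{j,2}(\mu_n)$ tends to $0$.

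Now $B_n$ maps $\hat D_j(\mu_n)$ onto $\mathbb D$ with degree $k_j$, and its boundary image winds $k_j$ times. Since $\Delta\gamma_{j,2}\to0$ while $\Delta\gamma_{j,1}\to 2\pi k_j$, the arc $\gamma_{j,1}\subset\s$ must carry almost all the winding. But $\gamma_{j,1}$ is a monotone-decreasing branch of $f_n$ between consecutive turning points. I would derive the contradiction from the argument principle on $\hat D_j$ combined with the explicit derivative \eqref{efelinha}. In the limit, $F'$ on $[C_{2j},C_{2j-1}]$ stays bounded below by a negative constant, and the curve $\Gamma_j$ stays at definite distance from $a_j^\infty$ because the critical point $a_j^\infty$ is uniformly away from $\s$. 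Hence the outer arc, which passes around $a_j$ at bounded hyperbolic distance, must produce a definite argument change.

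Concretely, I would take a limit $\Gamma_j^\infty\subset B_\infty^{-1}(\s)$ of the curves (in the Hausdorff sense). In the limit, the outer arc $\gamma_{j,2}^\infty$ maps into $\s$ with zero argument variation, so $B_\infty$ is constant on it. Since $B_\infty$ is nonconstant and analytic, $\gamma_{j,2}^\infty$ must then degenerate to a point. In that case $\hat D_j$ would shrink onto a point of $\s$ while still containing $a_j^\infty$ with $|a_j^\infty|=r_j^\infty>1$, which is a contradiction. Hence $r_j^\infty=1$.

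\emph{Step 2: $C_{2j-1}-C_{2j}\to0$.} Once $r_j\to1$, the disk $D_j(\mu_n)$ contains both $a_j$ and $1/\overline a_j$, which converge to the common point $e^{2\pi i\eta_j^\infty}$. The same degeneration argument then applies. The outer arc $\gamma_{j,2}$ has argument variation tending to $0$, and $B_n\to B_\infty$ locally uniformly away from $e^{2\pi i\eta_j^\infty}$. Any subarc of the limiting $\gamma_{j,2}$ lying away from that point would therefore be mapped by the nonconstant $B_\infty$ to a single point, which is impossible. So $\Gamma_j(\mu_n)$ collapses to $e^{2\pi i\eta_j^\infty}$, and both $c_{2j-1}$ and $c_{2j}$ converge to it. Since the ordering $C_{2j-1}<C_{2j}<C_{2j-1}+1$ is fixed by Proposition~\ref{prop:lifts}, the collapse takes place across the short arc $\gamma_{j,1}$. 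This gives $C_{2j-1}-C_{2j}\to0$ mod the normalization of the lifts.

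The main obstacle is making the limiting argument rigorous. This requires Hausdorff convergence of $\Gamma_j(\mu_n)$ and a way to exclude the outer arc escaping toward $a_i^\infty$-singularities of $B_\infty$ with $r_i^\infty=1$, $i\ne j$. I would handle the latter using the pairwise disjointness of the closures $\overline{D_i}$ from Lemma~\ref{toplemma2}.
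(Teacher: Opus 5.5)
Your proof is correct in substance. Step~1 is the paper's argument: if $r_j^\infty>1$, the outer arcs $\gamma_{j,2}(\mu_n)$ have argument variation tending to $0$, yet they must cross the annulus $1<|z|<r_j^\infty$ because they surround $a_j(\mu_n)$. A limit of these arcs then forces $B_\infty$ to be constant on a non-degenerate continuum; the paper phrases this, less cleanly, as ``infinitely many critical points''.

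Step~2 takes a different route. The paper, once $r_j\to 1$, counts modality. The $j$-th factor tends to the constant $(-a_j^\infty)^{k_j}$ away from $a_j^\infty$, so at most $2m-2$ turning points survive outside a neighbourhood of $a_j^\infty$, and the paper concludes that $c_{2j-1},c_{2j}$ lie in that neighbourhood. You instead rerun the degeneration argument to show that $\Gamma_j(\mu_n)$ itself collapses; the paper only mentions this in its closing sentence. Your route ties the collapse to the two specific points of $\Gamma_j\cap\s$. The modality count leaves that identification implicit: it does not say why this particular pair ends up near $a_j^\infty$, or what happens if other $a_i$ also approach $\s$. Your route also makes Step~1 redundant. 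If the Hausdorff limit of $\gamma_{j,2}(\mu_n)$ is a single point $p\in\s$, then $D_j(\mu_n)\to p$, and $a_j(\mu_n)\in D_j(\mu_n)$ gives $r_j\to1$ at the same time.

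Three points need tightening.

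\begin{enumerate}
\item The ordering $C_{2j-1}<C_{2j}<C_{2j-1}+1$ from Proposition~\ref{prop:lifts} does not decide which arc collapses: a priori $C_{2j}-C_{2j-1}$ could tend to $1$. The correct reason is topological. $D_j(\mu_n)$ is the bounded complementary component of $\Gamma_j(\mu_n)$, so it lies in any small disk $B(p,\varepsilon)$ containing $\Gamma_j(\mu_n)$. Hence the arc $\overline{D_j}\cap\s=\gamma_{j,1}$, on which $F_{\mu_n\kappa}$ decreases from $C_{2j-1}$ to $C_{2j}$, shrinks, which gives $C_{2j}-C_{2j-1}\to 0$.
\item Do not presuppose that the collapse point is $e^{2\pi i\eta_j^\infty}$. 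First show that the limit continuum of $\gamma_{j,2}(\mu_n)$ is a single point $p$, then identify $p$ via $a_j(\mu_n)\in D_j(\mu_n)$.
\item The obstacle you flag does not need disjointness of the $\overline{D_i}$, which in any case would not keep $\gamma_{j,2}$ away from some $a_i^\infty$. The singular set $S=\{a_i^\infty: r_i^\infty=1\}$ is finite, and $B_n\to B_\infty$ uniformly on compact subsets of $\overline{\mathbb C}\setminus S$. Any non-degenerate continuum contains a non-degenerate subcontinuum avoiding $S$, by boundary bumping. So $B_\infty$ would be constant on a set with accumulation points, even if that piece lies on $\s$.
\end{enumerate}

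Finally, drop the heuristic about $F'$ being bounded below by a negative constant and about ``bounded hyperbolic distance''. It is unjustified, and the Hausdorff-limit argument does all the work.
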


\begin{proof}
We first assume by contradiction that, up to a subsequence, $r_j(\mu_n)=|a_j(\mu_n)| \to r_j^\infty>1$ as $n \to \infty$, see Lemma \ref{lem:rj-bounded}. Following the notation and conclusions of Lemma \ref{lem:gap-kj}, we see that $\Delta \gamma_{j,2}(\mu_n) \to 0$ as $n\to \infty$. Since the argument of $B_{\mu_n \kappa}$ is strictly increasing along $\gamma_{2,j}(\mu_n)$, which encloses $a_j^n$, and $B_{\mu_n, \kappa}$ maps $\gamma_{2,j}(\mu_n)$ into $\s$, we conclude that up to a subsequence $B_{\mu_n \kappa}$ converges locally to a Blaschke-type map $B_\infty$, so that $B_\infty$ has infinitely many critical points in the annulus $1<|z|<r_j^\infty$. This follows from the fact that the arc $\gamma_{j,2}^n$ intersects this annulus for every $n$. Hence, $B_\infty$ is constant. This is a contradiction since the factor $((z-a_j^n)/(1-\overline {a}_j^nz))^{k_j}$ is not converging to a constant, and the remaining terms cannot cancel it out in a way that $B_\infty$ becomes a constant. This implies that $r_j(\mu_n)\to 1$ as $n\to \infty$. 

Hence, we may now assume that $a_j(\mu_n) \to a_j^\infty \in \s$ as $n\to \infty$, and that all the other parameters of $\mu_n$ converge. The factor $((z-a_j^n)/(1-\overline {a}_j^nz))^{k_j}$ then locally converges to a constant $(-a_j^\infty)^{k_j}$ on $\mathbb{C} \setminus \{a_j^\infty\}$. Thus, for $n$ sufficiently large, the modality of $f_{\mu_n \kappa}$ is at most $2m-2$ outside any given small neighborhood of $a_j^\infty \in \s$. Hence, for $n$ sufficiently large, both $c_{2j-1}(\mu_n)$ and $c_{2j}(\mu_n)$ must be contained in this neighborhood. This implies that ${\rm dist}_\s(c_{2j-1}(\mu_n),c_{2j}(\mu_n))\to 0$ as $n\to \infty$. In fact, using the argument in the previous paragraph, one can further show that the arc $\gamma_{2,j}(\mu_n)$ is arbitrarily close to $a_j^\infty$ for $n$ sufficiently large.
\end{proof}

The next lemma gives a uniform control on the first and second derivatives of the
lifts $F_{\mu\kappa}$ when the combinatorial type $\tau$ is fixed.
Geometrically, it shows that once the type $\tau$ is prescribed and the vector
$\kappa$ is chosen above $\tau$, no parameter $\mu\in\Delta_\tau$ can approach the
degenerate situation where some $a_j$ tends to the unit circle.
Indeed, by Lemma \ref{lem:collapse}, such a degeneration would force the corresponding
critical gap
$
F_\mu(C_{2j-1}(\mu)) - F_\mu(C_{2j}(\mu))
$
to approach its maximal possible value $k_j$, contradicting the combinatorial
constraints imposed by $\tau$.
The lemma below proves the existence of a uniform
$\delta>0$ such that $r_j(\mu)\ge 1+\delta$ for every $\mu\in\Delta_\tau$. As a consequence, the terms appearing in the expressions for $F'_\mu$ and $F''_\mu$ stay uniformly bounded.
This fact will play a key role in the construction of the Thurston pull-back map
in Section~6.

\begin{lem}\label{lem:deriv-bounds}
Let $g\in \mathcal{G}_m$ be a $2m$–modal circle map of type $\tau$, and
choose $\kappa=(k_0,\dots,k_m)$ so that 
$k_j\ge -\tau_{2j-1}+2$ for $j=1,\dots,m$, and 
$k_0=d+\sum_{j=1}^m k_j$, where $d$ is the degree of $g$.
Let $\Delta_\tau\subset cc(\Delta)$ be the set of parameters for which the
lift $F_{\mu\kappa}$ has type $\tau$. Then there exists $M_\tau>0$ such that
$$
|F_{\mu\kappa}'(t)| \leq M_\tau
\qquad\text{and}\qquad
|F_{\mu\kappa}''(t)|\leq M_\tau, \qquad \forall t\in \R, \forall \mu \in \Delta_\tau.
$$
Moreover, there exists $\delta>0$ so that $r_j(\mu) > 1+\delta$ for every $\mu \in \Delta_\tau$. 
\end{lem}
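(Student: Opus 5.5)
The plan is to prove the lower bound $r_j(\mu)>1+\delta$ first; the derivative bounds then follow from the explicit formulas. Write $F=F_{\mu\kappa}$. The first step is to extract from the type $\tau$ an upper bound on the critical gaps. For $\mu\in\Delta_\tau$ and each $j$, the number $\tau_{2j-1}$ is defined via the interval $[F(C_{2j}(\mu)),F(C_{2j-1}(\mu))]$, since $C_{2j-1}$ is a maximum and $C_{2j}$ a minimum. By Definition~\ref{type}, $|\tau_{2j-1}|$ is the minimal number of points of $s+\Z$ in this interval over $s\in\R$. An interval of length $L$ always contains at least $\lfloor L\rfloor$ such points, and can be translated to avoid containing $\lfloor L\rfloor+1$ of them. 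Hence the gap $L_j(\mu):=F(C_{2j-1}(\mu))-F(C_{2j}(\mu))$ satisfies $|\tau_{2j-1}|\le L_j(\mu)<|\tau_{2j-1}|+1$. Taking the sign convention $\tau_{2j-1}=-|\tau_{2j-1}|$, we get $L_j(\mu)<-\tau_{2j-1}+1\le k_j-1$, using $k_j\ge-\tau_{2j-1}+2$. So throughout $\Delta_\tau$ the gap stays at distance at least $1$ from its supremum $k_j$ given by Lemma~\ref{lem:gap-kj}.

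Next I will argue by contradiction for $\delta$. Suppose $\mu_n\in\Delta_\tau$ has $r_j(\mu_n)\to1$ for some $j$. By Lemma~\ref{lem:rj-bounded}, all $r_i$ are bounded, so after passing to a subsequence all parameters converge, with $a_j^n\to a_j^\infty\in\s$. I then need the converse of Lemma~\ref{lem:collapse}: as $r_j\to1$, the gap $L_j\to k_j$.

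\begin{itemize}
\item The other factors converge uniformly near $a_j^\infty$ to a function with bounded argument derivative. This holds as long as no other $a_i^n$ also tends to $a_j^\infty$; if some do, I group those indices together and the same argument applies with the summed degrees.
\item Near $a_j^\infty$, the factor $\bigl((z-a_j)/(1-\overline a_j z)\bigr)^{k_j}$ restricted to $\s$ has lift $k_j(2\varphi_j(t)-t)$, using \eqref{theta_til}.
\item On a small arc around $\eta_j$, this lift decreases by nearly $k_j$ over an arc whose length tends to $0$, because $2\varphi_j$ jumps by $1$ across $\eta_j$ as $r_j\to1$.
\item Meanwhile the other terms vary by $o(1)$ on that arc.
\end{itemize}

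The critical points $C_{2j-1},C_{2j}$ are trapped in this shrinking arc, by the argument of Lemma~\ref{lem:collapse} and the modality count. It follows that $L_j(\mu_n)\to k_j$, contradicting $L_j<k_j-1$. The main obstacle is making this rigorous, in particular when several $a_i$ collide at the same point of $\s$. There I would use the winding-number decomposition of Lemma~\ref{lem:gap-kj}. The arc $\gamma_{j,2}$ lies in $D_j$, which shrinks toward $a_j^\infty$ because $D_j$ is separated from the other disks. The argument variation along $\gamma_{j,2}$, namely $\Delta\gamma_{j,2}=2\pi(k_j-L_j)$, must therefore tend to $0$: $B$ maps $\gamma_{j,2}$ monotonically into $\s$, and its image is a small arc because $B$ is nearly constant off $a_j^\infty$ on the outer boundary. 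This again gives $L_j\to k_j$.

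Finally, with $1+\delta<r_j\le R$ on $\Delta_\tau$, \eqref{efelinha} gives
$$|F'(t)|\le k_0+\sum_j k_j\frac{R^2-1}{\delta^2},$$
since $|e^{2\pi i t}-a_j|\ge r_j-1>\delta$. Differentiating \eqref{efelinha} once more, $F''$ is a sum of terms $k_j(r_j^2-1)\,\partial_t|e^{2\pi it}-a_j|^{-2}$. Each is bounded by a constant times $R^3/\delta^3$, which yields $M_\tau$.
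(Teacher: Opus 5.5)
Your overall architecture is the paper's: bound the gaps $L_j$ via the type, show by contradiction that $r_j\to1$ would force $L_j\to k_j$, and then read off the derivative bounds from \eqref{efelinha} and Lemma~\ref{lem:rj-bounded}. Your bound $L_j<-\tau_{2j-1}+1\le k_j-1$ is in fact the correct form of \eqref{eq:gap-tau}; the paper's $L_j\le-\tau_{2j-1}$ is off by one, harmlessly. Your final paragraph is also fine.

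The gap is the crux step, which you flag but do not close. In the collision case, disjointness of the $\overline{D_i}$ gives no reason for $D_j$ to shrink. Even if $D_j\to a_j^\infty$, knowing that $B_n$ is nearly constant away from $a_*$ says nothing about $B_n$ on $\gamma_{j,2}$. That arc lies near $a_*$, exactly where the factor $\bigl((z-a_j)/(1-\overline a_j z)\bigr)^{k_j}$ concentrates all of its variation, at scale $r_j-1$. What would make $\Delta\gamma_{j,2}$ small is that $\gamma_{j,2}$ stays at distance $\gg r_j-1$ from $a_j$. That is essentially the same information as $C_{2j-1},C_{2j}$ straddling the bulk of the dip. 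Since $\Delta\gamma_{j,2}=2\pi(k_j-L_j)$, asserting it is small just restates $L_j\to k_j$, so the step is circular.

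Your isolated-case bullets have a related hole. Being ``trapped in a shrinking arc'' shows neither that the max--min pair straddling the dip at $\eta_j$ is the $j$-th pair rather than another one, nor that $C_{2j-1},C_{2j}$ straddle the dip at all. The paper's summation over the cluster $J_*$ presupposes the same identification.

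Here is a way to close it without compactness or case distinctions. Put $\epsilon=r_j-1$ and $P_j(t)=(r_j^2-1)/|e^{2\pi it}-a_j|^2$, so $\int_0^1P_j\,dt=1$.
\begin{itemize}
\item Every term $k_l(r_l^2-1)/|e^{2\pi it}-a_l|^2$ in \eqref{efelinha} is positive, so $F'\le k_0-k_jP_j$. Colliding factors therefore only help.
\item On $W=\{|t-\eta_j|\le\epsilon^{3/4}\}$ one has $P_j\ge c\,\epsilon^{-1/2}$. Hence $F'<0$ on $W$ for small $\epsilon$, while $\int_{W^c}P_j=O(\epsilon^{1/4})$.
\item To identify the decreasing interval containing $W$, take the radial segment $z=(1+\epsilon u)e^{2\pi i\eta_j}$, $0<u\le1$, which ends at $a_j$. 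Every factor with $l\ne j$ has modulus $<1$ there, and $\log|B|\le k_0\epsilon u-2k_ju<0$ as soon as $\epsilon<2k_j/k_0$.
\item By Lemma~\ref{toplemma2}, the segment therefore lies in $\hat D_j$. So $e^{2\pi i\eta_j}\in\overline{\hat D_j}\cap\s$, which is the arc from $c_{2j-1}$ to $c_{2j}$. Since $F'(\eta_j)<0$, this gives $W\subset(C_{2j-1},C_{2j})$ for the appropriate lift.
\item Therefore $L_j\ge\int_W(-F')\,dt\ge k_j\bigl(1-O(\epsilon^{1/4})\bigr)-2k_0\epsilon^{3/4}$. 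This contradicts $L_j<k_j-1$ once $\epsilon$ is below a threshold depending only on $\kappa$.
\end{itemize}
This produces $\delta$ directly, and your derivative estimates then finish the proof.
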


\begin{proof}
By the definition of $\Delta_\tau$ and the choice of $\kappa$, we have
\begin{equation}\label{eq:gap-tau}
0<
F_{\mu \kappa}(C_{2j-1}(\mu))
-
F_{\mu \kappa}(C_{2j}(\mu))
\leq 
-\tau_{2j-1} \leq
k_j-2,
\qquad j=1,\dots,m.
\end{equation}

Suppose by contradiction that there exist sequences $\mu_n=(\eta_{0}^{n}, a_{1}^{n}, r_{2}^{n}, \eta_{2}^{n}, \ldots, r_{m}^{n}, \eta_{m}^{n}) \in cc(\Delta)$ and $t_n \in [0,1]$, with $n\in \mathbb N$, such that $|F_{\mu_n \kappa}'(t_n)|\to +\infty$ as $n\to \infty$. We may assume that $\mu_n$ converges to $\mu_\infty=(\eta_0^\infty,a_1^\infty,\ldots, r_m^\infty, \eta_m^\infty)$ as $n\to \infty$, see Lemma \ref{lem:rj-bounded}.  we immediately see from the expression \eqref{efelinha} that there exists a non-empty set $J \subset \{1,\ldots,m\}$ so that $r_j^{n}\to 1$ as $n\to \infty$ for every $j\in J$. Otherwise, every $r_j^{n}=r_j(\mu_n)$ stays away from $1$, and thus the derivative is uniformly bounded. We may assume that $t_n \to t_*\in [0,1]$, $a_j^{n} \to a_*=e^{2\pi i t_*}\in \s$ for some non-empty subset $J_*\subset J$, and  $a_i^\infty\neq a_*$ for every $i\in J \setminus J_*$. For those terms in $J_*$, we see that in a small fixed neighborhood around $a_* \in \s$ and $n$ sufficiently large, the total variation 
$$
\sum_{j\in J_*} F_{\mu_n \kappa}(C_{2j-1}(\mu_n))
-
F_{\mu_n \kappa}(C_{2j}(\mu_n))
$$
is arbitrarily close to $\sum_{j\in J_*} k_j$. This implies that there exists $j_*\in J_*$ such that $F_{\mu_n \kappa}(C_{2j_*-1}(\mu_n))
-
F_{\mu_n \kappa}(C_{2j_*}(\mu_n))$ is arbitrarily close to $k_{j_*}$ for $n$ sufficiently large, a contradiction. We also conclude that there exists $\delta>0$ so that $r_j(\mu)>1+\delta$ for every $\mu \in \Delta_\tau$. In particular, the expression for $F_{\mu\kappa}'$ in \eqref{efelinha} implies that, since the $a_j$'s stay away from $\s$ uniformly in $\mu\in \Delta_\tau$, the derivatives $F_{\mu \kappa}'(t)$ and $F_{\mu \kappa}''(t)$ are uniformly bounded for every $t\in \R$ and every $\mu\in \Delta_\tau$. 
\end{proof}

The boundary $\partial (cc(\Delta))$, is composed by those
parameters $\mu$ for which at least two turning points collapse.
This is the only possible way to escape $\Delta$ since the turning
points are non-degenerate and the modality of $f_{\mu\kappa}$ is at most
$2m$. However, there are two different ways of how these turning points may be collapsed:

\begin{enumerate}
\item The parameter 
$\mu=(\eta_0,\, a_1,\, r_2,\, \eta_2, \ldots, r_m, \, \eta_m) \in      \partial\Delta$
is such that $r_j  > 1,$ for all $j \in \{1,\ldots,m\}$.
This happens when at least two turning points collapse to produce a
degenerate critical point of $F_{\mu\kappa}$. Their critical values are
also collapsed.
\item This second case  happens when
$r_j =1$, for some $j \in \{1, \ldots, m \}$. When this occurs,
two consecutive turning
points of $F_{\mu\kappa}$ are also collapsed, but their critical values
do not. This implies non-uniform bounds of $F_{\mu\kappa}'$ for
nearby parameters in $\Delta$. 
\end{enumerate}

By Lemma \ref{lem:deriv-bounds}, the boundary $\partial \Delta_\tau$, with $\kappa \succeq \tau,$ $\Delta_\tau \subset cc(\Delta)$, contains only parameters $\mu$ corresponding to degenerate critical points and those parameters $\mu$ corresponding to the change of type, i.e.,  $F_{\mu \kappa}(C_{2j-1}(\mu)) - F_{\mu \kappa}(C_{2j}(\mu))$ is a positive integer.

\section{Parameters and critical values}

Let us keep the above notation, that is,   $\Pi: \mathbb{R} \to \s$ is the 
covering map given by $\Pi(t) = e^{2 \pi i t}$ and 
$F_{\mu \kappa}$ is a lift of $f_{\mu \kappa}$.
The critical points of $F_{\mu\kappa}$, as in Proposition~\ref{prop:lifts}, are given by smooth functions globally defined in a connected component $cc(\Delta)\subset \Delta$, and labeled
as $C_1(\mu) < \cdots < C_{2m}(\mu) < C_1(\mu) +1,$ where 
$C_1(\mu) =\Pi(c_1(\mu))$
is a point of local maximum associated with $a_1$.
More generally, $C_{2j-1}(\mu)$ and $C_{2j}(\mu)$
are, respectively,  points of maximum and minimum of $F_{\mu \kappa}$, and
$0 < F_{\mu \kappa}(C_{2j-1}(\mu) )- F_{\mu \kappa}(C_{2j}(\mu) ) < k_j$, see Lemma \ref{lem:gap-kj}. 
 We choose the lift such that $F_{\mu \kappa}(C_1(\mu)) \in [0,1)$.

Given  $\kappa = (k_0, \ldots, k_m)$, recall that the topological degree
of $f_{\mu \kappa}$ is $d=k_0 - \sum_{j=1}^mk_j$. Let
$$
\begin{aligned}
     V& :=  \big\{\nu =(v_1,\ldots,v_{2m}) \in \mathbb{R}^{2m}:
(-1)^{i}(v_{i+1}-v_i)>0, \, 0<v_{2j-1}-v_{2j} < k_j \\ &    
\mbox{ and }
v_{2m} < v_{1}+d, \forall i=1,\ldots,2m, \forall j=1,\ldots,m \big\}.
\end{aligned}
$$

Consider the smooth  map  $\phi:cc(\Delta)  \to  V$  defined by
\begin{equation} \label{F}
\phi(\mu):=(F_{\mu}(C_1(\mu)),\ldots,F_{\mu}(C_{2m}(\mu))), \qquad \forall \mu \in cc(\Delta).
\end{equation}

Notice that Lemma \ref{lem:gap-kj} implies that $\phi$ is well-defined. The proposition below shows that $\phi$ is indeed a diffeomorphism from the space of parameters $cc(\Delta)$ onto the space of critical values $V$.

\begin{pro} \label{localdifeo} 
The map $\phi: cc(\Delta) \to V$  defined in \eqref{F} is a diffeomorphism.
\end{pro}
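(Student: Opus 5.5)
The plan is the standard one for such parameter-to-critical-value maps: show that $\phi$ is a local diffeomorphism, show that it is proper onto $V$, and conclude with a covering-space argument using that $V$ is simply connected.

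\textbf{Local diffeomorphism.} Both $cc(\Delta)$ and $V$ are open subsets of $\R^{2m}$, so it suffices to show that $D\phi(\mu)$ is injective. Because $F_\mu'(C_i(\mu))=0$, we get $\partial_\mu [F_\mu(C_i(\mu))] = (\partial_\mu F_\mu)(C_i(\mu))$. Suppose a tangent vector $\dot\mu$ is in the kernel. Then $\dot B := \frac{d}{ds}B_{\mu+s\dot\mu}$ gives the holomorphic quadratic-type object $Q=\dot B/B'$, a rational vector field on $\overline{\mathbb C}$, and the vanishing of $\dot F$ at the critical values says that $Q$ is regular at the critical points $c_i$. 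The form of \eqref{F_a} controls the poles: at $a_j$ and $1/\overline a_j$ the variation $\dot B$ has a pole (resp.\ zero) of order at most one more than $B'$, so $Q$ is holomorphic there, and likewise at $0,\infty$. I also expect $Q$ to be tangent to $\s$ by symmetry. I would then argue, in the spirit of the infinitesimal Thurston rigidity in \cite{MS}, that $Q$ must be a global holomorphic vector field on $\overline{\mathbb C}$. Such a field vanishes at $0,\infty$ and is tangent to $\s$, so it is $i\theta z$, i.e.\ an infinitesimal rotation.

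An infinitesimal rotation changes $\eta_0$ and all the $\eta_j$ simultaneously. That is excluded by the normalization $a_1>0$, unless the variation is trivial. Hence $\dot\mu=0$. This step is the main obstacle: careful bookkeeping of orders of zeros and poles of $\dot B$ versus $B'$ at every critical point, and of $\dot B$ at the zeros of $B$, is needed. As a fallback, I would try a direct argument that a kernel element forces $\dot F\equiv$ const via counting zeros of the real trigonometric-type function $\dot F$ on $[0,1]$: it must have at least $2m$ zeros, while its complexification has too few.

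\textbf{Properness.} Take $\mu_n\to\partial\, cc(\Delta)$ or $\mu_n\to\infty$ with $\phi(\mu_n)\to\nu\in V$. Lemma~\ref{lem:rj-bounded} bounds the $r_j$. The $\eta$'s live in a compact set modulo normalization, and $\eta_0$ is controlled by $F_\mu(C_1)\in[0,1)$ together with $v_1$. Hence we may assume $\mu_n\to\mu_\infty$, and only the two boundary scenarios remain. If some $r_j\to1$, Lemma~\ref{lem:collapse} together with the convergence of the gaps (via the argument of Lemma~\ref{lem:deriv-bounds}) forces $v_{2j-1}-v_{2j}=k_j$, contradicting $\nu\in V$. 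If all $r_j>1$ but two turning points merge, the corresponding critical values merge, so some inequality $(-1)^i(v_{i+1}-v_i)>0$ becomes an equality, again contradicting $\nu\in V$. Thus $\phi$ is proper.

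\textbf{Conclusion.} A proper local diffeomorphism between manifolds is a finite covering onto its image. The image is open, and by properness it is also closed in $V$. The set $V$ is an open convex polyhedral set (it is cut out by linear strict inequalities), hence connected and simply connected. So $\phi$ is surjective, and the covering is trivial. Since $cc(\Delta)$ is connected, $\phi$ is a bijection and hence a diffeomorphism. As a by-product, $cc(\Delta)$ is simply connected.
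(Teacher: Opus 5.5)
Your argument is correct in outline. For the local-diffeomorphism step it takes a genuinely different route from the paper; the global step is the paper's.

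\textbf{Local diffeomorphism.} The paper computes $D\phi$ in the coordinates $(\eta_0,a_1,r_2,\eta_2,\dots,r_m,\eta_m)$. Using $F_\mu'(C_i)=0$ and the critical-point equations \eqref{Jacobian_Mtil}, it reduces $D\phi$ by column operations to the Cauchy matrix $(1/(c_i-b_l))$ with $b=(a_1,1/a_1,a_2,1/\overline a_2,\dots)$. That determinant is nonzero because the $c_i$ and the $b_l$ are distinct.

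You argue instead by infinitesimal rigidity, and the bookkeeping you flag as the main obstacle is short.
\begin{itemize}
\item At each $c_i$, $Q=\dot B/B'$ is holomorphic, because $\dot B(c_i)=2\pi i\,\dot F(C_i)B(c_i)=0$ and $c_i$ is simple.
\item Near $a_j$ one has $\dot B/B=-k_j\dot a_j/(z-a_j)+O(1)$ and $B'/B=k_j/(z-a_j)+O(1)$; the situation at $1/\overline a_j$ is analogous.
\item Near $0$, $\dot B/B$ is bounded while $B'/B\sim k_0/z$, and symmetrically at $\infty$.
\end{itemize}
By the Riemann--Hurwitz count these are all the critical points for $\mu\in\Delta$. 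So $Q$ is a global holomorphic vector field directly, with no appeal to rigidity results from \cite{MS}. It vanishes at $0$ and $\infty$, so $Q=\beta z$. On $\s$ one has $Q(z)=2\pi i z\,\dot F/F'$, so $\beta\in i\R$.

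The step from ``$Q$ is an infinitesimal rotation'' to $\dot\mu=0$ comes from the same expansion, which gives $Q(a_j)=-\dot a_j$. Then $\dot a_1=-\beta a_1$ is both real and purely imaginary, so $\beta=0$ and $\dot B\equiv 0$. Hence all $\dot a_j$ and $\dot\eta_0$ vanish; here one uses that the $a_j$ are pairwise distinct on $\Delta$, again by Riemann--Hurwitz.

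The paper's route yields an explicit formula for $\det D\phi$. Yours is coordinate-free and avoids the column reductions. It also explains why exactly one real normalization ($a_1>0$) is needed: rotations are the only infinitesimal symmetry of the family, and this normalization kills them.

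\textbf{Global part.} The open-and-closed image, proper local diffeomorphism and covering of the convex set $V$ follow the paper's argument, and you state it more carefully than the paper's one-line closedness claim. A few small points:
\begin{itemize}
\item You do not need Lemma~\ref{lem:collapse} but its converse: $r_j\to1$ forces some gap $v_{2j_*-1}-v_{2j_*}$ to tend to $k_{j_*}$. This is what the argument of Lemma~\ref{lem:deriv-bounds} supplies.
\item The boundedness of the $\eta$'s ``modulo normalization'' needs justification. Integer shifts of $\eta_j$ and $\eta_0$ give the same map $B_\mu$ and, for suitable shifts, the same value of $\phi$. Both properness and injectivity therefore require that such translates do not lie in the same component. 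The paper passes over this as well.
\item With the normalization $F_\mu(C_1)\in[0,1)$, $\phi$ cannot be onto $V$. You should use the lift \eqref{theta_til}, which is continuous in $\eta_0$; then $v_1$ does control $\eta_0$.
\end{itemize}
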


\begin{proof} We use the inverse function theorem 
to prove the statement. In $cc(\Delta)$, we consider real coordinates $\mu= (\eta_0, ,a_1, r_2, \eta_2, \ldots, r_m, \eta_m)$ as before and denote by $J(\mu)$ the Jacobian matrix of $\phi$ with respect to them. Since $F_\mu'(C_j(\mu))=0,$ for every $\mu$ and $j$, we see that $J(\mu)$ contains only terms of the form $\partial_{\mu_i} F_{\mu}(C_j(\mu))$.

Following \eqref{varfi}, we let $\varphi_j\in \R$ satisfy
$$
e^{2\pi i \varphi_j}=\frac{z-a_j}{|z-a_j|}, \qquad z=e^{2\pi i t} \in \s, \quad a_j=r_je^{2\pi i \eta_j}, \quad j=2,\ldots,m,
$$ 
and see it as a real-valued function depending on the real parameters $t,r_j,\eta_j$. We then compute
$$
2\frac{\partial \varphi_j}{\partial \eta_j}= - \frac{a_j}{ z -
a_j} -\frac{\overline {a}_j}{\overline{z} - \overline {a}_j}\quad \mbox{ and } \quad  4
\pi i \frac{\partial \varphi_j}{\partial r_j}(t)= \frac{1}{r_j} \left(
-\frac{a_j}{z - a_j} + \frac{\overline {a}_j}{\overline{z} - \overline {a}_j}\right).
$$
We also have that
$$
4 \pi i \frac{\partial \varphi_1}{\partial a_1}(t)=  -\frac{ 1}{ z -
a_1} + \frac{1}{\overline{z} - a_1} \ \  \mbox{ and } \ \ \ \frac{\partial
F_{\mu}}{\partial \eta_0}(t) = 1.
$$
Using \eqref{theta_til} and the above formulas, and performing  some algebraic manipulations, such as summing up columns and multiplying columns of $J$ by non-zero
constants, we end up with the following complex matrix
$$
J_2= \begin{pmatrix} 1&
\frac{a_1}{c_1-a_1} +\frac{\frac{1}{a_1}}{c_1-\frac{1}{a_1}}&
\frac{1}{c_1-a_2} & \frac{1}{c_1-\frac{1}{\overline {a}_2}} & \ldots &
\frac{1}{c_1-a_m} & \frac{1}{c_1-\frac{1}{\overline {a}_m}}\\
\vdots& \vdots& \vdots& \vdots& \vdots& \vdots& \vdots\\
1& \frac{a_1}{c_{2m}-a_1}
+\frac{\frac{1}{a_1}}{c_{2m}-\frac{1}{a_1}} & \frac{1}{c_{2m}-a_2} &
\frac{1}{c_{2m}-\frac{1}{\overline {a}_2}} & \ldots &
\frac{1}{c_{2m}-a_m} & \frac{1}{c_{2m}-\frac{1}{\overline {a}_m}} \\
\end{pmatrix}  
$$
where $c_i =c_i(\mu) = e^{2 \pi i C_i(\mu)}$. Notice that the determinant of $J$ does not vanish if and only if
the determinant of $J_2$ does not vanish. 
From (\ref{fl_a}) we know that
\begin{equation} \label{Jacobian_Mtil}
k_0+k_1\left(\frac{a_1}{c_i-a_1}-\frac{\frac{1}{a_1}}{c_i-\frac{1}{a_1}}
\right)+
\sum_{j=2}^{m}k_j\left(\frac{a_j}{c_i-a_j}-\frac{\frac{1}{\bar
a_j}}{c_i-\frac{1}{\overline {a}_j}}\right)=0\ , 
\end{equation}
for each $i=1,\ldots,2m$.

Now multiplying the first column of $\stackrel{\sim}{J}$ by $k_0$
and using (\ref{Jacobian_Mtil}) we find, after some algebraic
manipulations, the following Cauchy  matrix

$$
J_3= \begin{pmatrix}
\frac{1}{c_1-a_1}&
 \frac{1}{c_1-\frac{1}{a_1}}&
\frac{1}{c_1-a_2} & \frac{1}{c_1-\frac{1}{\overline {a}_2}} & \ldots &
\frac{1}{c_1-a_m} & \frac{1}{c_1-\frac{1}{\overline {a}_m}}\\
\vdots& \vdots& \vdots& \vdots& \vdots& \vdots& \vdots\\
\frac{1}{c_{2m}-a_1}&  \frac{1}{c_{2m}-\frac{1}{a_1}} &
\frac{1}{c_{2m}-a_2} & \frac{1}{c_{2m}-\frac{1}{\overline {a}_2}} & \ldots
&
\frac{1}{c_{2m}-a_m} & \frac{1}{c_{2m}-\frac{1}{\overline {a}_m}} \\
\end{pmatrix}
$$
Again, the determinant of $J_3$ does not vanish if
and only if the determinant of $J$ does not vanish.
But the determinant of $J_3$ is given by

\begin{equation}
 \frac{\prod_{ 1\leq i< j \leq 2m } (c_j-c_i)(b_j-b_i)} {\prod _{
1\leq i, j \leq 2m } (c_j-b_i)}
\end{equation}
where $b_1=a_1, \, b_2 = \frac{1}{a_1}, \, b_3=a_2, \, b_4 = \frac{1}{\bar
a_2},\ldots, b_{2m-1} = a_m, \, b_{2m}=\frac{1}{\overline {a}_m}$. We conclude
that $\det J \neq 0$ and this implies that $\phi$ is a local diffeomorphism.

Denote by $\phi_i(\mu) = F_{\mu}(C_i(\mu)), i=1,\ldots, 2m$,
the components of $\phi(\mu)$. Lemma \ref{lem:gap-kj} implies  that $0 < \phi_{2j-1}(\mu) -\phi_{2j}(\mu)  < k_j$
which means that $\phi$  maps $\Delta$ into $V$.  Since $\phi$ is a local diffeomorphism, the image of $\phi$ is open in $V$. It is also closed since $\partial(cc(\Delta))$ contains only parameters for which either some consecutive critical values collapse or $\phi_{2j-1}(\mu)-\phi_{2j}=k_j$ for some $j$. Hence $\phi$ is surjective. Since $V$ is simply connected, we conclude that $\phi$ is a diffeomorphism. \end{proof}

Let $g \in \mathcal{G}_m$  be a $2m$-multimodal map of degree $d$ 
and  $\tau =( \tau_1, \ldots, \tau_{2m-1})$ its  type with respect to some 
point $c_1$ of local maximum.
We choose $\kappa =(k_0, \ldots ,k_m)$, which will be fixed, so that  
$k_j \geq  -\tau_{2j-1} + 2$, $j = 1, \ldots, m$, and $k_0 = d + \sum_{j=1}^{m}k_j$. We remark here that, according to Lemma~\ref{lem:deriv-bounds}, we need to choose the components of $\kappa$  adding at least $2$ to the corresponding components of $\tau$ in order to realize all possible distances between critical values of two consecutive critical points. This choice is also important to guarantee lifts $F_{\mu \kappa}$ with derivative bounds as stated in the lemma below. Denote by $\Delta_{\tau}$ the subset of $\Delta$ corresponding to those parameters $\mu$ for which
 $F_{\mu\kappa}$ has type $\tau$. We also denote by $V_{\tau}\subset V$
the image of $\Delta_{\tau}$ under $\phi$ and
$\phi_{\tau}=\phi|_{\Delta_{\tau}}.$

\begin{lem}\label{simplyconnect} If  $\tau$ and $\kappa$  are as above,
then there is a constant $M_{\tau} > 0  $ such that
$\|F^{\prime}_{\mu}\| < M_{\tau}$ and $\|F^{\prime
\prime}_{\mu}\| < M_{\tau }$ for any $\mu \in \Delta_{\tau}$.
Moreover,  the map $\phi_{\tau}$ is a diffeomorphism between 
each connected component of $\Delta_\tau$ and $V_\tau$,
in particular $\Delta_\tau$  is simply connected.
 
\end{lem}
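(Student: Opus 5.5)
The derivative bounds are exactly the content of Lemma~\ref{lem:deriv-bounds}, so the first assertion is immediate. By that lemma there is $\delta>0$ with $r_j(\mu)>1+\delta$ for all $\mu\in\Delta_\tau$. Together with Lemma~\ref{lem:rj-bounded}, which gives $r_j\le R$, the denominators $|e^{2\pi i t}-a_j|^2$ in \eqref{efelinha} stay in $[\delta^2,(R+1)^2]$. Hence $F'_\mu$ and its $t$-derivative $F''_\mu$ are uniformly bounded by a constant depending only on $\kappa,\delta,R$. I would simply restate this and fix $M_\tau$ accordingly.

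For the second assertion I plan to restrict the diffeomorphism $\phi\colon cc(\Delta)\to V$ of Proposition~\ref{localdifeo}. The key observation is that the type is a function of the critical values alone. By Definition~\ref{type}, $\tau_i=(-1)^i\min\#\{[v_i,v_{i+1}]\cap(s+\Z)\}$ where $v_i=F_\mu(C_i(\mu))$, so it depends only on $\phi(\mu)$. Thus $\Delta_\tau\cap cc(\Delta)=\phi^{-1}(V_\tau)$, and we can describe $V_\tau$ explicitly as
\[
V_\tau=\{\nu\in V:\ |v_{i+1}-v_i|\in(|\tau_i|-1,\,|\tau_i|+1)\ \text{with the integer-count condition}\},
\]
equivalently $\lfloor |v_{i+1}-v_i|\rfloor$ lies in a prescribed set for each $i$. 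More precisely, $\min_s\#([x,y]\cap(s+\Z))=\lceil y-x\rceil-1$ or $\lfloor y-x\rfloor$, i.e.\ essentially $\lfloor y-x\rfloor$ up to the boundary convention. So I would show that $V_\tau$ is a product-like region: fixing $v_1\in[0,1)$ (the normalization $F_\mu(C_1)\in[0,1)$), the successive differences $\delta_i=(-1)^i(v_{i+1}-v_i)>0$ each range over an interval $I_i$ determined by $\tau_i$. These differences are subject to the constraints $\delta_{2j-1}<k_j$ from $V$, which are automatic since $|\tau_{2j-1}|+1\le k_j-1$ by the choice $\kappa\succ\tau$, and to $v_{2m}<v_1+d$.

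The main obstacle is the coupling constraint $v_{2m}<v_1+d$ (and the type condition on the last gap $v_1+d-v_{2m}$ if it is encoded). Apart from it, $V_\tau$ is a convex set, being an intersection of half-spaces in the coordinates $(v_1,\delta_1,\dots,\delta_{2m-1})$. The constraint $v_{2m}-v_1=\sum_i(-1)^{i+1}\cdot(\pm\delta_i)<d$ is linear in these coordinates. So $V_\tau$ is an intersection of open half-spaces and slabs, hence convex, hence connected and simply connected.

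Then $\phi$ restricts to a diffeomorphism from $\phi^{-1}(V_\tau)=\Delta_\tau\cap cc(\Delta)$ onto $V_\tau$. Every connected component of $\Delta_\tau$ inside $cc(\Delta)$ is therefore a full preimage of the connected set $V_\tau$, so there is exactly one such component, and it is diffeomorphic to $V_\tau$. Simple connectivity follows from convexity of $V_\tau$. If convexity fails because the floor-type condition is not an interval, I would instead check that each $I_i$ is an interval, which holds because $\min\#$ is monotone in the length. That again yields a product of intervals cut by one linear inequality, which is still convex.
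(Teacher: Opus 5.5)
Your argument is correct, but for the second assertion it takes a different route from the paper. For the derivative bounds you and the paper both just invoke Lemma~\ref{lem:deriv-bounds}.

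\textbf{The paper's route.} It does not use the global injectivity of $\phi$ from Proposition~\ref{localdifeo}. It argues that $\phi_\tau$ is proper, because fibres cannot accumulate on $\partial\Delta_\tau$: the gaps $\phi_{2j-1}-\phi_{2j}$ stay away from $k_j$. Since $\phi_\tau$ is also a local diffeomorphism, it is a covering map. The paper then uses the asserted, but unproved, fact that $V_\tau$ is connected and simply connected to conclude that each component of $\Delta_\tau$ maps diffeomorphically onto $V_\tau$.

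\textbf{Your route.} You observe that the type is a function of the critical-value vector alone; with the closed-interval convention, $|\tau_i|=\lfloor (-1)^i(v_{i+1}-v_i)\rfloor$. Hence $\Delta_\tau\cap cc(\Delta)=\phi^{-1}(V_\tau)$ is a full preimage under the bijection $\phi\colon cc(\Delta)\to V$, and the restriction is automatically a diffeomorphism onto $V_\tau$. You then check that $V_\tau$ is cut out by finitely many linear inequalities in $v$, so it is convex.

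\textbf{What each buys.} Your approach is shorter. It supplies the missing justification that $V_\tau$ is connected and simply connected. It also gives a sharper conclusion: $\Delta_\tau$ meets each component of $\Delta$ in a single connected set, so ``each connected component'' in the statement is really ``the'' component. The cost is that all the weight rests on the global injectivity claimed in Proposition~\ref{localdifeo}. The paper's argument for this lemma needs only the local-diffeomorphism part of that proposition, plus properness of $\phi_\tau$, which is where the uniform bound $r_j>1+\delta$ enters. Even so, the paper's argument still needs your convexity observation to be complete.
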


\begin{proof}
It follows from Lemma~\ref{lem:deriv-bounds} that a bound for the derivatives 
$F_{\mu}^{\prime}$ and $F_\mu^{\prime \prime}$ fail only 
when $\phi_{2j-1}(\mu) - \phi_{2j}(\mu)$ is  near $k_j$, for some
$j \in \{1, \ldots, m\}$. By our choice of $k_j$,
$0 < \phi_{2j-1}(\mu) - \phi_{2j}(\mu) \leq - \tau_{2j-1} \leq k_{j} -2$.
Therefore, there is a constant $M_\tau$ as stated.

The boundary 
$\partial \Delta_\tau$ consists of parameters $\mu \in \Delta$, where either two consecutive critical points coincide (and the corresponding critical values also coincide) or  $\phi_{2j-1}(\mu) - \phi_{2j}(\mu) = k_j,$ for some
$j \in \{1, \ldots, m\}$. Then $\phi_\tau$ can be continuously extended to 
a map from the
closure of $cc(\Delta_\tau) $ to the closure of $V_\tau$ and  $\phi_\tau^{-1}(\nu) \cap \Delta_\tau$ cannot accumulate in 
$\partial \Delta_\tau$.
This implies that $\phi_\tau$ is proper and by 
Proposition~\ref{localdifeo} it is a local diffeomorphism,
therefore, $ \phi_\tau: \Delta_\tau \to V_\tau$ is a covering map. 
Since $V_\tau$ is connected and  simply connected, $\phi$ is
a diffeomorphism when restricted to a connected component of
$\Delta_\tau$ and each connected component of $\Delta_\tau$
is simply connected. \end{proof}

\section{Realizing finite combinatorics - the Thurston operator} \label{finite}

Let $g: \s \to \s $ be a $C^1$ $2m$-multimodal map in $\mathcal{G}_m$,  i.e., $g$ does not have wandering 
intervals, inessential periodic attractors, or non-trivial intervals of periodic points 
with the same period. We
consider a lift $G: \R \to \R$ of $g$ with respect to the covering map
$\Pi : \R \to \s $ given by $\Pi(t) = e^{2 \pi i t}$. 
We assume with no loss of generality  that $0$ is a local maximum of
$G$ and $0 \leq G(0) < 1$. Let $ 0 = C_1 < C_2 < \cdots < C_{2m} < 1$ 
be the $2m$ turning points of $G$, let $\tau = (\tau_1, \ldots, \tau_{2m-1} ) $ 
be the type of $g$ 
with respect to $ C_1 $ and let  $\kappa=(k_0, \ldots, k_m)$ be formed by positive integers satisfying \eqref{cond_kj}, i.e., 
where $k_0 = d + \sum_{j=1}^{m}k_j$ and $k_j \geq -\tau_{2j-1} +2,$ for
$j=1,\ldots, m$.  

With the above $\kappa$ fixed, we consider  the corresponding 
family of $2m$-multimodal maps  $f_\mu = f_{\mu\kappa},$ where
$\mu$  is in a connected component of $ \Delta_\tau$. 
According to Corollary \ref{label}, the $2m$ critical points 
of $f_\mu$  are well defined and cyclically ordered in $\s$. They are labeled
according to the cyclic order on $\s$, say $c_1(\mu) < \cdots < c_{2m}(\mu)$, 
where $c_1(\mu) $ is a local maximum, which by definition is
related to the parameter $a_1$ in the sense that 
$c_1(\mu),c_2(\mu) \in \s \cap \Gamma_1$. 
 Let  $F_{\mu }= F_{\mu \kappa}$  
 be a  lift of $f_{\mu }$ with respect to the above
covering map $\Pi: \R \to \s$. 
The turning points of $F_\mu$ are 
$C_1(\mu) < \cdots < C_{2m}(\mu) < C_1(\mu) +1$
and they satisfy  $\Pi(C_i(\mu)) = c_i(\mu)$, $i=1, \ldots, m$.
As a consequence of our choice of $\kappa$,
if $ \mu $ is in a connected component of $\Delta_\tau$, then
$F_{\mu \kappa}(C_{2j-1}(\mu) )- F_{\mu \kappa}(C_{2j}(\mu) ) 
\leq  - \tau_{2j-1} \leq  k_j-1$,
for $j=1, \ldots, m$ and Lemma~\ref{simplyconnect} implies that
$\|F^{\prime}_{\mu}\| < M_{\tau}$ and $\|F^{\prime
\prime}_{\mu}\| < M_{\tau }$.

Now we consider a normalization of the family $F_\mu$, $\mu \in \Delta_\tau$, 
given by the following family  
$$
\tilde F_{\mu}(t):= F_{\mu}(t+C_1(\mu))-C_1(\mu), \quad \mu \in \Delta_\tau .
$$
Observe that $t=0$ is a local maximum of $\tilde F_{\mu}$ and corresponds 
to the local maximum $C_1(\mu)$ of $F_{\mu}$. Moreover, both  
$F_{\mu}$ and $\tilde F_{\mu}$ induce in $\mathbb{S}^1$ the same map, 
after a conjugation by a rigid rotation.

The family $\tilde F_{\mu}$, $\mu \in \Delta_\tau$, has essentially the same 
properties of the family $F_{\mu}$. Indeed,
the critical points of $\tilde F_\mu$ are 
$ 0= \tilde C_1(\mu)  < \tilde C_2(\mu) < \cdots < \tilde C_{2m}(\mu) < 1$ 
and they satisfy  $\tilde C_j(\mu)=C_j(\mu)-C_1(\mu)$. 
The corresponding critical values are
$ \tilde \phi_j(\mu)=\tilde F_{\mu}(\tilde C_j(\mu)) = F_{\mu}(C_j(\mu))-C_1(\mu)$.
Therefore, the  map
 $\tilde \phi: \Delta_\tau \to V_\tau$, defined by
 $ \tilde \phi(\mu)=(\tilde \phi_1(\mu),\ldots,\tilde \phi_{2m}(\mu))$
satisfies $ \tilde \phi(\mu) = \phi(\mu) - C_1(\mu)(1,1,\ldots,1)$ and 
it is easy to check that if $D \tilde \phi(\mu)$ 
is the Jacobian matrix of $\tilde \phi$ at $\mu \in \Delta_\tau$, then
$D \tilde \phi(\mu)=D \phi(\mu)-M(\mu)$,
where $D \phi(\mu)$ is the Jacobian matrix of $\phi$ at $\mu \in \Delta_\tau$ and 
$M(\mu)$ is a matrix with constant entries in each column $j \in\{1,\ldots, 2m\}$ 
given by $\partial C_1(\mu)/\partial \mu_j$. Since the first column of 
$D\phi(\mu)$ has constant entries equal to $1$, we have 
$\det(D\tilde \phi(\mu)) =\det(D \phi(\mu))\neq 0$ for all $\mu\in\Delta_0$.
It follows that $\tilde \phi$ is a local diffeomorphism and inherits all the properties proved for $\phi$. Arguing as before, we conclude that
Lemma~\ref{simplyconnect} is true if we replace $F_\mu$ by $\tilde F_\mu$
and $\phi$ by $\tilde \phi$.
 
For simplicity, from now on, we will keep using the notation 
$\phi$, $F_{\mu}$ and $f_\mu$
instead of $\tilde \phi$, $\tilde F_{\mu}$ and $\tilde f_\mu$, 
having in mind that the 
family  $F_{\mu}$ is normalized as above.

Now we fix a connected component $\textrm{cc}(\Delta_\tau)$ of 
$\Delta_\tau$  as above and
assume that $g$    has \emph{ finite combinatorics} which 
means that the union of the forward orbits of its
turning points, the so-called \emph{ post-critical set} of $g$,  is a finite set. We claim that there is $\mu \in \textrm{cc}(\Delta_\tau)$
such that $g$ is topologically conjugate to $f_{\mu}$. 
To prove this claim e consider the following data:

\begin{enumerate}
\item
The points $z_1 < \ldots < z_k \in [0,1)$ are the turning points of
$G $ union with  all their iterates $\mz$ and the points $z_i$ such that $ G(z_i)
\in \Z$. Observe that $z_1 = C_1 = 0$.
\item
Let $t_1, \ldots,t_{2m+l}$ be the positive integers such that
$z_{t_1},\ldots,z_{t_{2m}}$ are the turning points of $G$
and for $j=2m+1, \ldots, 2m+l$, the points $z_{t_j}$ are not turning
points and $G(z_{t_j})\in \Z$.
\item
Let $\sigma:\{1, \ldots, k \} \to \{1, \ldots, k \}$ be the map given by
$z_{\sigma(j)} = G(z_j) \mz$ if $ j  \not \in \{t_{2m+1},\ldots,t_{2m+l}
\}$  and $\sigma(j)=1 $ for $j \in  \{t_{2m+1},\ldots,t_{2m+l} \}$.
\end{enumerate}

For each $r\in \{1,\ldots,k-1\}$, we define $s(r)\in \N$ to be the least integer $l$ such that $g^l([z_r,z_{r+1}])$ contains a turning point. Because $g\in \mathcal G_m,$ $s(r)$ is well-defined and $s(r)=0$ if and only if $z_r$ or $z_{r+1}$ is a turning point. Indeed, if no such integer exists, then $g$ admits a periodic interval $[z_i,z_{i+1}]$ with no turning points, which forces $g$ to have either an inessential periodic attractor or a non-trivial interval of periodic points, contradicting the fact that $g\in \mathcal G_m$. See Proposition \ref{prop_combinatorio}.
 
With this information about $g$, the choice of $\kappa = (k_0, \ldots, k_m)$
and  the family $f_{\mu}$, $\mu \in \textrm{cc}(\Delta_\tau)$ (and its
lift  $F_{\mu}$) already defined above, we define a continuous map
$T = T_{\tau}$ on the following simplex:
$$
W= \left\{x = (x_1, \ldots, x_k) \in  \R^k : 0 = x_1 < \ldots x_k < 1 \right\}.
$$
In order to  define $T(x_1, \ldots, x_k) = (y_1, \ldots, y_k)$,  
remember that $0 = C_1(\mu) < \cdots < C_{2m}(\mu)$ are the critical points
of $F_\mu$ and take  the unique $\mu \in  \textrm{cc}(\Delta_{\tau})$ such that:
$$
\phi(\mu) =\left(x_{\pi(t_1)} , x_{\pi(t_2)}, \ldots,  x_{\pi(t_{2m})} \right) +
\left(0,\tau_1,\tau_1+\tau_2, \ldots, \sum_{j=1}^{2m-1} \tau_j \right).
$$
Now we define $(y_1, \ldots, y_k) := T(x_1, \ldots, x_k)$ as follows:
\begin{enumerate}
\item
$0= y_{t_1} < \cdots <  y_{t_{2m}} < 1$ 
are the turning points of $F_{\mu}$ in $[0,1)$.
\item
$y_{t_{2m+1}}, \ldots, y_{t_{2m+l}}$ are the points such that
$F_{\mu}(y_{t_j}) \in \Z$.
\item
For $t_j < i < t_{j+1}$, $y_i$ is the unique point in $(y_{t_j}, y_{t_{j+1}})$
so that $x_{\sigma(i)}= F_{\mu}(y_i) \mz$.
\end{enumerate}

\medskip

The map $T$ is a single-valued map from $W$ to itself, which is
called the {\em Thurston operator of type $\tau$} associated to the family
$f_{\mu}$,  $\mu \in \textrm{cc}( \Delta_{\tau})$.

\begin{pro}
    $T:W \to W$ is continuous.
\end{pro}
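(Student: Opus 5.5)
The plan is to write $T$ as a composition of continuous maps: first $x\mapsto\mu(x)$, then $\mu\mapsto(y_1,\dots,y_k)$. Each $y_i$ is defined implicitly by an equation that is transversal, so the implicit function theorem, or equivalently a strict monotonicity argument, gives continuous dependence.

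\emph{Step 1: the parameter depends continuously on $x$.} The vector
$$
\nu(x)=\left(x_{\pi(t_1)},\ldots,x_{\pi(t_{2m})}\right)+\left(0,\tau_1,\ldots,\textstyle\sum_{j=1}^{2m-1}\tau_j\right)
$$
is an affine, hence continuous, function of $x\in W$. We must check that $\nu(x)\in V_\tau$ for every $x\in W$. This uses $0=x_1<\dots<x_k<1$ together with the definition of the type $\tau$: the strict inequalities $(-1)^i(\nu_{i+1}-\nu_i)>0$ and $0<\nu_{2j-1}-\nu_{2j}\le-\tau_{2j-1}<k_j$ follow from the ordering of the $x$'s combined with the signs of the $\tau_j$. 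By Lemma~\ref{simplyconnect}, together with its normalized version for $\tilde\phi$, the map $\phi_\tau:\mathrm{cc}(\Delta_\tau)\to V_\tau$ is a diffeomorphism. Hence $\mu(x)=\phi_\tau^{-1}(\nu(x))$ is continuous, indeed smooth, in $x$.

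\emph{Step 2: the defining points depend continuously on $\mu$.} By Corollary~\ref{label} and Proposition~\ref{prop:lifts}, the normalized critical points $\tilde C_j(\mu)$ are analytic in $\mu$, so the coordinates $y_{t_1},\dots,y_{t_{2m}}$ are continuous. On each lap $(y_{t_j},y_{t_{j+1}})$ between consecutive turning points, $F_\mu$ is strictly monotone with $F_\mu'\neq0$ in the interior. Each remaining coordinate $y_i$ (either $y_{t_{2m+r}}$ or $y_i$ with $t_j<i<t_{j+1}$) is therefore the unique solution in that lap of an equation $F_\mu(y)=n+x_{\sigma(i)}$ with $n\in\Z$. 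The integer $n$ is locally constant in $x$, since it is fixed by the combinatorics $\sigma$ and the type, which do not change on $W$. Because $\partial_yF_\mu(y_i)\neq0$ at an interior point and $(\mu,y)\mapsto F_\mu(y)$ is smooth, the implicit function theorem gives that $y_i$ is continuous in $(x,\mu(x))$.

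\emph{Step 3: the main obstacle, uniformity near the lap endpoints.} One has to rule out that the solution $y_i$ escapes to an endpoint of its lap, or that the number of solutions changes under perturbation. A target value $n+x_{\sigma(i)}$ could coincide with a critical value $F_\mu(y_{t_j})$, and there $\partial_yF_\mu=0$. This cannot happen for $x\in W$, because the critical values are $\nu(x)$, whose entries are built from coordinates $x_{\pi(t_j)}$ that are distinct from $x_{\sigma(i)}$ whenever $z_{\sigma(i)}$ is not the critical value of that endpoint. Here we use that $\sigma$ is injective on the relevant indices, which is inherited from $g$'s ordering. So each target lies strictly inside the open interval of values of the lap, and the solution stays in the interior.

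For the points with $F_\mu(y)\in\Z$, the targets are integers strictly between the two critical values. This follows again from the strict inequalities defining $W$ and the type: the count of integers crossed is $|\tau_j|$ for every $x$. Monotonicity on the lap, combined with the uniform derivative bounds of Lemma~\ref{lem:deriv-bounds}, then gives local uniform continuity. Finally, the ordering $0=y_1<\dots<y_k<1$ is preserved, so $T(x)\in W$, and $T$ is continuous as a composition of continuous maps.
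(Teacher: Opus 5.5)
Your proposal is correct and follows essentially the same route as the paper: $x\mapsto\mu(x)$ is continuous because the parameter-to-critical-value map is a diffeomorphism, and each remaining coordinate of $T(x)$ is the unique solution of $F_\mu(y)=x_{\sigma(i)}+n$ on a prescribed monotone branch, with the integer $n$ fixed by the combinatorics. Your use of the implicit function theorem and your check that the targets stay strictly inside the branch value ranges only make explicit what the paper's short argument leaves implicit; the appeal to the derivative bounds at the end is not needed.
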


\begin{proof}
Notice that the construction of $T$
depends on two steps: first, the choice of the parameter $\mu \in \Delta_\tau$
corresponding to a given configuration of critical values, and second, the
pullback of marked points through the lift $F_\mu$ along prescribed monotonicity
branches. By Proposition \ref{localdifeo}, the map assigning $\mu$ to a configuration is continuous. Moreover, the family $F_\mu$ depends smoothly on
$\mu$ and, on each monotonicity interval, the inverse branches vary continuously.
Since the combinatorial data fixes the choice of branches and integer lifts, each coordinate of $T(x)$ is obtained by solving an equation of the form
$
F_\mu(y) = x_j + n,
$
which has a unique solution on the prescribed branch and depends continuously on
$x$. It follows that $T$ is continuous.
\end{proof}

Observe that a fixed point
$(x_1, \ldots,  x_k)$ of $T$ means that $ x_{\pi(i)} = F_{\mu}(x_i) \mz$, 
for some $\mu \in \textrm{cc}(\Delta_{\tau})$. This implies
that $f_{\mu }$ and $g$ are combinatorially equivalent and, because
$g \in \mathcal{G}_m$, they are topologically conjugate, see \cite{MS}. 

The following combinatorial property will be used in the proof of the existence of a fixed point of the Thurston operator.

\begin{pro}\label{prop_combinatorio}
Let $g:\mathbb S^{1}\to \mathbb S^{1}$ be a $C^{1}$ multimodal map in the class $G_{m}$, and let
$
z_{1}<z_{2}<\cdots<z_{k}<z_{1}+1
$
be the control points used in the definition of the Thurston operator.
For each $j\in\{1,\dots,k\}$, define $s(j)\in\mathbb Z_{\ge 0}$ to be the smallest
nonnegative integer such that
\[
g^{\,s(j)}([z_j,z_{j+1}])
\]
contains a turning point of $g$. Then $s(j)$ is well-defined for every $j$. Moreover, if $s(j)\ge 1$, then
$g([z_j,z_{j+1}])$
contains an interval of the form $[z_r,z_{r+1}]$ for some $r\in\{1,\dots,k\}$ satisfying
$
s(r)=s(j)-1.
$
\end{pro}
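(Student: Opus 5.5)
We prove the two assertions separately, using the defining properties of the control set $Z=\{z_1,\ldots,z_k\}$ (indices taken cyclically, $z_{k+1}=z_1+1$). The relevant property is that $Z$ is forward invariant modulo $\Z$: it contains the turning points and all their iterates. The extra points $z_{t_j}$ with $G(z_{t_j})\in\Z$ are mapped to $0=z_1$, which is itself a turning point. Hence $g(Z)\subset Z$ and $\C_g\subset Z$.

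\emph{Second assertion.} Suppose $s(j)\ge 1$. Then the closed arc $I_j=[z_j,z_{j+1}]$ contains no turning point, so $g|_{I_j}$ is a homeomorphism onto its image. Its endpoints $g(z_j),g(z_{j+1})$ lie in $Z$, and they are distinct because $g$ is injective on $I_j$. The image $g(I_j)$ is therefore an arc whose endpoints are in $Z$, so it is a union of consecutive arcs $[z_r,z_{r+1}]$ and contains at least one of them.

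For every such $r$ we have $s(r)\ge s(j)-1$, since $g^{s(j)-1}([z_r,z_{r+1}])\subset g^{s(j)}(I_j)$ and, for $i<s(j)-1$, $g^i([z_r,z_{r+1}])\subset g^{i+1}(I_j)$ contains no turning point. Conversely, $g^{s(j)-1}(g(I_j))$ contains a turning point. Because $g^{s(j)-1}$ is injective on $g(I_j)$, we can write $g^{s(j)-1}(g(I_j))$ as the union of the images of the sub-arcs $[z_r,z_{r+1}]$, so one of these images contains that turning point. That sub-arc has $s(r)\le s(j)-1$, hence $s(r)=s(j)-1$.

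One technical point needs care: when $d\neq 0$, the image $g(I_j)$ could wrap around the circle more than once. In that case it contains a full circle and hence a turning point already at step $1$. The sub-arc argument still works, provided we note that $Z$-endpoints decompose the image into sub-arcs counted with multiplicity.

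\emph{First assertion: $s(j)$ is finite.} Suppose instead that $g^n(I_j)$ contains no turning point for every $n\ge0$. By the argument above, each $g^n(I_j)$ is a union of arcs of the finite partition $\{[z_r,z_{r+1}]\}$, and each $g^n$ is a homeomorphism on $I_j$. The arcs $[z_{r_n},z_{r_n+1}]$ from the second assertion then give an infinite chain with $s(r_{n})=s(r_{n-1})-1$, which is impossible for finite values. So the only case left is all $s=\infty$ along the chain. Since there are only finitely many arcs, some arc $I=[z_r,z_{r+1}]$ contained in $g^{n}(I_j)$ returns: $I\subset g^p(I)$ for some $p\ge1$ with $g^p|_I$ monotone.

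Then $g^p(I)\supset I$, and no iterate of $I$ meets a turning point. Two cases arise. If $g^p(I)=I$ (or the orientation-reversing analogue with $g^{2p}$), then $g^{2p}|_I$ is an increasing homeomorphism of $I$. Each of its fixed points is either part of an interval of periodic points, which is excluded by condition (i), or an attracting/repelling endpoint. In the latter case the interior of $I$ lies in the basin of a periodic attractor, or its orbit is wandering; both are ruled out, since the attractor's immediate basin would contain no turning point (inessential, excluded by (iii)) and wandering intervals are excluded by (ii). If instead $g^p(I)\supsetneq I$, we pass to the nested preimages, or note that the images grow, and argue the same way. This reduction from the failure of finiteness to conditions (i)--(iii) is the main obstacle. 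I would handle it by reducing to a monotone self-map of a periodic arc without critical points and invoking the standard one-dimensional trichotomy of attracting orbit, interval of periodic points, or wandering interval, exactly as in \cite{MS}.
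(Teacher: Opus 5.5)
\textbf{Assessment.} Your second assertion is right and follows the paper; the first assertion has a genuine gap at the step you defer to \cite{MS}.

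\textbf{Second assertion.} Your argument is the paper's, and it makes explicit a point the paper leaves tacit: $g(Z)\subset Z$, so $g(I_j)$ is an arc with endpoints in $Z$ and hence a union of control arcs. Two remarks:
\begin{itemize}
\item You do not need injectivity of $g^{s(j)-1}$, since the image of a union is the union of the images.
\item The wrap-around case is essentially vacuous. Every $z$ with $G(z)\in\mathbb Z$ is a control point, so $G(I_j)$ lies in a single $[n,n+1]$, and the extreme case $G(I_j)=[n,n+1]$ gives $g(I_j)=\mathbb S^1$.
\end{itemize}

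\textbf{First assertion: reduction to a periodic arc.} The paragraph about chains with $s(r_n)=s(r_{n-1})-1$ does no work once $s(j)=\infty$. The case where $g^p(I)$ strictly contains $I$ is left vague. The direct route is the paper's: the arcs $g^n(I_j)$ have endpoints in the finite set $Z$, so $g^{n_1}(I_j)=g^{n_2}(I_j)=:K$, and $g^p|_K$ is a homeomorphism of $K$. Equivalently, your increasing chain $g^{kp}(I)$ of unions of control arcs must stabilise. There is also no ``wandering'' alternative: every orbit of the increasing homeomorphism $g^{2p}|_K$ converges to a fixed point.

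\textbf{The gap.} Condition (i) yields a complementary interval of $\mathrm{Fix}(g^{2p}|_K)$ whose points converge to one of its endpoints $v$. This $v$ is a (possibly one-sided) periodic attractor whose orbit avoids $\mathcal C_g$. It does \emph{not} follow that its immediate basin avoids $\mathcal C_g$. If $v$ is an endpoint of $K$, the basin can extend beyond $\bigcup_i g^i(K)$.

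For example, take $x\mapsto ax(1-x)$ with $1<a<2$. The arc $[0,1-1/a]$ is invariant and mapped homeomorphically, and no iterate of it meets $1/2$. Yet the attractor $1-1/a$ is essential, since its immediate basin is $(0,1)$. The trichotomy of \cite{MS} produces the attractor but does not rule this out. The paper's proof asserts the same inference in one line, so following it will not close the gap.

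\textbf{The missing ingredient is post-critical finiteness.} One way to use it:
\begin{itemize}
\item Put $h=g^{2p}$ and let $U$ be the component of the basin of $v$ containing $v$. Then $h(U)\subset U$, and $v$ is the only fixed point of $h$ in $U$.
\item Hence $h$ moves each point of $U\setminus\{v\}$ towards $v$; otherwise its orbit would be monotone away from $v$.
\item Since $h$ is an increasing local homeomorphism at $v$, this gives a (possibly one-sided) neighbourhood of $v$ that $h$ maps injectively into itself.
\item Let $L\subset U$ be the maximal interval containing this neighbourhood on which every $h^k$ is injective.
\item Suppose an endpoint $\beta$ of $L$ lay in $U$. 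Maximality would force $g^i(\beta)\in\mathcal C_g$ for some $i$. Strict monotonicity of $h^k$ on the arc from $v$ to $\beta$ gives $h^k(\beta)\neq v$ for all $k$, although $h^k(\beta)\to v$. That turning point would then have an infinite orbit, which is impossible.
\item So $h$ is injective on $U$, and $U$ contains no turning point.
\end{itemize}
Arguing the same way at each $g^i(v)$, the attractor is inessential, which contradicts (iii).
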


\begin{proof}
We first show that $s(j)$ is well-defined. Fix $j\in\{1,\dots,k-1\}$ and suppose,
by contradiction, that no iterate of $[z_j,z_{j+1}]$ contains a turning point. Then,
for every $n\ge 0$, the interval
$
I_n:=g^n([z_j,z_{j+1}])
$
contains no turning point, and therefore $g$ is monotone on each $I_n$.

Since the endpoints of $[z_j,z_{j+1}]$ belong to the finite set of control points,
their forward orbits are finite. Hence, only finitely many endpoint configurations can
occur and, therefore, there exist integers $n_1<n_2$ such that
$
I_{n_1}=I_{n_2}.
$
Thus $I:=I_{n_1}$ is a periodic interval for some iterate $g^p$, where $p=n_2-n_1$,
and by construction $I$ contains no turning point.

Now consider the restriction $g^p|_I$. Since $I$ contains no turning point, this map
is monotone. If $g^p|_I$ or $g^{2p}|_I$ is the identity on $I$, then $I$ is a nontrivial interval
consisting entirely of periodic points of the same period, contradicting the definition
of $G_m$. Otherwise, $g^p|_I$ is a nontrivial monotone self-map of $I$, hence $I$
contains an attracting periodic orbit for $g$. Because $I$ contains no turning point,
the immediate basin of this attractor contains no turning point, so the attractor is
inessential, again contradicting the definition of $G_m$. This proves that $s(j)$ is
well-defined.

Let us prove the second statement. Assume that $s(j)\ge 1$. Then
$
J:=g([z_j,z_{j+1}])
$
is an interval, and by minimality of $s(j)$, the interval
$
g^{\,s(j)-1}([z_j,z_{j+1}])
$
contains no turning point, while
$
g^{\,s(j)}([z_j,z_{j+1}])=g(J)
$
does contain one.  Since $J$ is a non-trivial connected interval, it must contain at least one 
sub-interval $[z_r,z_{r+1}]$ so that
$
g^{\,s(j)-1}([z_r,z_{r+1}]) \subset g^{\,s(j)}([z_j,z_{j+1}])
$
contains a turning point. On the other hand, by minimality of $s(j)$,
no earlier iterate of $[z_j,z_{j+1}]$ contains a turning point, and therefore
no earlier iterate of $[z_r,z_{r+1}]$ can contain one either. It follows that
$
s(r)=s(j)-1,
$
as claimed.
\end{proof}

\section{Existence of a fixed point for the Thurston operator}

In this section, we present an elementary proof of the existence of a fixed point
for Thurston's pull-back operator in the Blaschke-type setting.  
The argument makes use of Brouwer's fixed–\-point theorem and differs from the case of interval maps in an essential
aspect: the dynamics lives on the circle, and critical arcs may wrap around
$\mathbb S^1$. Thus, the operator depends on the {\em type}
$\tau$ of the map, i.e., the combinatorial data recording the relative position
of critical points and their images.  In particular, the pull–back equations
involve integer winding numbers, absent in the interval case.
Also, the proof presented here differs substantially from the original argument of  de~Melo and van Strien~\cite{MS}, where the existence of a fixed point for the Thurston operator is obtained by a more elaborate topological argument relying on detailed control of the boundary of parameter space.  In contrast, our construction exploits the uniform $C^{2}$ bounds available in the Blaschke-type family and the combinatorial covering property (Proposition \ref{prop_combinatorio}), which
together allow us to define a ``thickened'' simplex $W_\varepsilon$ that is forward-invariant under the Thurston operator $T$.
Since $W_\varepsilon$ is a compact convex polytope with nonempty
interior, the existence of a fixed point follows immediately from the classical
Brouwer Fixed Point Theorem.  This provides a simpler and more transparent route to existence than the general argument in~\cite{MS}, while remaining fully compatible with the additional winding data that arise in the
circle setting.

Throughout, we assume the hypotheses of Lemma~\ref{simplyconnect}
(and of the combinatorial lemma that produces the integers $s(i)$). Recall that the Thurston operator
$T = T_\tau : W \to W$
is defined on the open simplex
\[
W := \Big\{x = (x_1,\dots,x_k)\in\mathbb R^k : 0 = x_1 < x_2 < \cdots < x_k < 1\Big\}.
\]
Given $x\in W$, we first find the unique parameter
$\mu = \mu(x)\in\Delta_\tau$ such that the vector of critical values
of $F_\mu$ coincides with the prescribed coordinates of $x$, with the
appropriate winding corrections determined by the type $\tau$. We then
define $y=T(x)$ by pulling back the remaining coordinates of $x$
along $F_\mu$, as in the previous subsection. Thus $T(x)$ encodes the
configuration of “control points’’ for the map $F_\mu$ realizing the
critical values prescribed by $x$.

Note that $W$ is open, so we cannot apply Brouwer’s fixed-point theorem
directly. Following a standard idea, we shall construct a nested family
of closed sub-simplices on which $T$ is invariant, and then apply
Brouwer to one of these closed sets.

\medskip

Let $c>0$ be the uniform bound for the first and second derivatives of
the lifts $F_\mu$ obtained in Lemma~\ref{simplyconnect}, and let
$s(i)\in\mathbb N$, $i=1,\dots,k-1$, be the integers provided by Proposition \ref{prop_combinatorio} which depend only on the dynamics of $g$ and control the number of iterates needed for the images of the intervals
\([z_i,z_{i+1}]\) to contain a turning point.

\begin{defi}
For each sufficiently small $\varepsilon > 0$, we define the closed simplex
\[
W_\varepsilon
:= \Big\{x=(x_1,\dots,x_k)\in W :
x_{i+1}-x_i \;\ge\; \frac{\varepsilon}{c^{s(i)}} \ \text{ for all }
i=1,\dots,k-1\Big\}.
\]
\end{defi}

Geometrically, $W_\varepsilon$ is obtained from $W$ by forbidding the
collapse of two consecutive coordinates $x_i,x_{i+1}$ faster than the
scale $\varepsilon/c^{s(i)}$. This scale is chosen so that the
distortion estimates given by Lemma~\ref{simplyconnect} and the
combinatorics encoded in the numbers $s(i)$ fit together.

\begin{lem}\label{lem:Weps-basic}
For every sufficiently small $\varepsilon>0$, the set $W_\varepsilon$
is a non–empty, compact, convex subset of $\mathbb R^k$ with non–empty
interior. In particular, $W_\varepsilon$ is homeomorphic to a closed
ball of dimension $k-1$.
\end{lem}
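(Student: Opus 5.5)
The plan is to present $W_\varepsilon$ as the intersection of finitely many closed half-spaces inside a bounded region, and then to exhibit one explicit interior point.

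\emph{Compactness and convexity.} Write $\delta_i:=\varepsilon/c^{s(i)}>0$ for $i=1,\dots,k-1$. Every point of $W_\varepsilon$ automatically satisfies $x_{i+1}>x_i$, so the constraint $x_{i+1}-x_i\ge\delta_i$ already encodes the strict ordering. Hence $W_\varepsilon$ is the set of $x\in\mathbb R^k$ satisfying:
\begin{itemize}
\item the linear equation $x_1=0$;
\item the linear inequalities $x_{i+1}-x_i\ge\delta_i$ for $i=1,\dots,k-1$;
\item the inequality $x_k<1$.
\end{itemize}
The last condition is the only one that is not closed. The fix is to use the fact that the interval $[x_k,x_1+1]$ is also a gap between consecutive control points, namely the gap across the identification $1\sim 0$. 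I would therefore include, as the definition intends, the closing constraint $1-x_k\ge\delta_k:=\varepsilon/c^{s(k)}$, where $s(k)$ is the index attached to $[z_k,z_1+1]$ in Proposition~\ref{prop_combinatorio} (that proposition is stated for all $j\in\{1,\dots,k\}$). If one prefers to keep the definition literally as written, the alternative is to intersect with $\{x_k\le 1-\varepsilon\}$, which is harmless for the subsequent invariance argument. Either way, $W_\varepsilon$ becomes an intersection of finitely many closed half-spaces with the hyperplane $\{x_1=0\}$. It is bounded because $0\le x_i\le 1$. It is therefore a compact convex polytope.

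\emph{Nonemptiness and interior.} Set $\Sigma:=\sum_{i=1}^{k}\delta_i=\varepsilon\sum_i c^{-s(i)}$. For $\varepsilon$ small enough we have $\Sigma<1$. Define $\lambda:=1/\Sigma>1$ and the point $x^*$ by
\[
x^*_1=0,\qquad x^*_{i+1}=x^*_i+\lambda\delta_i .
\]
Then $x^*_k=1-\lambda\delta_k$, and every defining inequality holds strictly at $x^*$, since each gap equals $\lambda\delta_i>\delta_i$. A small neighbourhood of $x^*$ inside the hyperplane $\{x_1=0\}\cong\mathbb R^{k-1}$ therefore lies in $W_\varepsilon$. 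Note that the interior must be understood relative to this hyperplane, since $x_1=0$ is fixed; this is the sense in which the statement speaks of interior.

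\emph{Homeomorphism to a ball.} A compact convex subset of $\mathbb R^{k-1}$ with nonempty interior is homeomorphic to the closed $(k-1)$-ball. The map realizing this is radial projection from the interior point $x^*$: the Minkowski gauge of $W_\varepsilon-x^*$ is continuous and positive away from the origin, and it rescales each ray from $x^*$ onto the corresponding ray of the unit ball.

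\emph{Main obstacle.} The only delicate point is the interpretation issue above: choosing what closes up the condition $x_k<1$ (and reading "interior" relative to $\{x_1=0\}$). All other steps are routine. Taking $\varepsilon<\bigl(\sum_i c^{-s(i)}\bigr)^{-1}$ makes everything work, and since $c$ may be assumed $\ge1$, this is also consistent with the smallness required later.
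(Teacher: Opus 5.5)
Your proof is correct, and it follows the same route as the paper. Both use finitely many closed half-spaces inside the hyperplane $\{x_1=0\}$, an explicit point whose gaps all exceed the thresholds, and the fact that a compact convex body with nonempty interior is a closed ball.

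The one real difference is that you close up the strict condition $x_k<1$, and you are right to do so. Taken literally, $W_\varepsilon$ is not compact: for $k=2$ it equals $\{0\}\times[\varepsilon/c^{s(1)},1)$. So the paper's step ``hence $W_\varepsilon$ is closed'' is an oversight. Likewise, ``interior'' has to be read relative to $\{x_1=0\}$, as you say.

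Since $z_1=0$ is a turning point, $s(k)=0$, so your two repairs coincide and both give $1-x_k\ge\varepsilon$. With this constraint, $W_\varepsilon$ is a genuine affine simplex, as the paper asserts. The new gap $[y_k,1]$ ends at the critical point $1$ of the normalized lift, so it is handled by the $s=0$ case of Lemma~\ref{lem:Weps-invariance}. That lemma in fact already needs this lower bound, because Proposition~\ref{prop_combinatorio} allows $r=k$.
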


\begin{proof}
The defining inequalities
$x_{i+1}-x_i \ge \varepsilon/c^{s(i)}, i=1,\dots,k-1,$
together with $0=x_1$ and $x_k<1$, describe the intersection of $W$
with finitely many closed half–spaces. Hence $W_\varepsilon$ is closed,
convex, and bounded. Hence, it is compact. For $\varepsilon>0$ small enough, we can choose a point $x\in W$ with gaps $x_{i+1}-x_i$ all much larger
than $\varepsilon/c^{s(i)}$, so $x\in W_\varepsilon$ and the interior
of $W_\varepsilon$ is non–empty. Since $W$ is an open simplex of
dimension $k-1$, each $W_\varepsilon$ is an affine closed simplex of the
same dimension, hence homeomorphic to a closed $(k-1)$–ball.
\end{proof}

The next lemma is the key “invariance’’ property.

\begin{lem}\label{lem:Weps-invariance}
There exists $\varepsilon_0>0$ such that for every
$0<\varepsilon\le \varepsilon_0$ we have
$
T(W_\varepsilon)\subset W_\varepsilon.
$
\end{lem}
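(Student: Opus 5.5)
We prove the invariance by checking, for $x\in W_\varepsilon$ and $y=T(x)$, the gap condition $y_{i+1}-y_i\ge \varepsilon/c^{s(i)}$ separately for the two possible kinds of index $i$: those with $s(i)=0$ and those with $s(i)\ge 1$. Throughout, $\mu=\mu(x)\in \mathrm{cc}(\Delta_\tau)$ is the parameter chosen in the definition of $T$. The basic tool is the mean value theorem applied to the lift $F_\mu$ on a monotone branch. Lemma~\ref{simplyconnect} gives $|F_\mu'|\le c$ uniformly on $\Delta_\tau$. Hence if $y_i<y_{i+1}$ lie in the same monotonicity interval of $F_\mu$, then
\[
|F_\mu(y_{i+1})-F_\mu(y_i)|\le c\,(y_{i+1}-y_i).
\]
By construction of $T$ we also have $F_\mu(y_i)\equiv x_{\sigma(i)}$ and $F_\mu(y_{i+1})\equiv x_{\sigma(i+1)}$ mod $\Z$, with the integer corrections fixed by the type $\tau$ and by the points $z_{t_j}$ with $G(z_{t_j})\in\Z$.

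\textbf{Case $s(i)\ge1$.} Then $[z_i,z_{i+1}]$ contains no turning point. Since $z_{t_{2m+1}},\dots$ are included among the $z$'s, $G([z_i,z_{i+1}])$ projects onto a union of consecutive intervals $[z_r,z_{r+1}]$ containing no integer-crossing in its interior. Correspondingly, $F_\mu([y_i,y_{i+1}])$ covers the union of the consecutive intervals $[x_r,x_{r+1}]$ for $r$ between $\sigma(i)$ and $\sigma(i+1)$, up to an integer translation. By Proposition~\ref{prop_combinatorio}, this union contains some $[x_r,x_{r+1}]$ with $s(r)=s(i)-1$. Therefore
\[
c\,(y_{i+1}-y_i)\ \ge\ x_{r+1}-x_r\ \ge\ \frac{\varepsilon}{c^{s(i)-1}},
\]
which gives $y_{i+1}-y_i\ge \varepsilon/c^{s(i)}$, with no restriction on $\varepsilon$. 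The one thing to check here is that the combinatorics of $x$ and of $g$ match: the ordering of the $y$'s reproduces the ordering of the $z$'s, so the interval $[z_r,z_{r+1}]\subset g([z_i,z_{i+1}])$ corresponds to $[x_r,x_{r+1}]\subset F_\mu([y_i,y_{i+1}])$ mod $\Z$. This follows directly from item (3) of the definition of $T$.

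\textbf{Case $s(i)=0$.} Here $y_i$ or $y_{i+1}$ is a turning point, say $y_{i+1}=C_j(\mu)$ with critical value $\phi_j(\mu)$. This is the main obstacle, because $F_\mu'$ vanishes at that endpoint and the mean value bound gives no lower estimate. We use the second derivative bound of Lemma~\ref{simplyconnect}. Near a critical point, $|F_\mu(y)-F_\mu(C_j)|\le \tfrac{c}{2}(y-C_j)^2$. Hence
\[
y_{i+1}-y_i\ \ge\ \sqrt{2|x_{\sigma(i)}-x_{\sigma(i+1)}|/c}\ \ge\ \sqrt{2\varepsilon'/c},
\]
where $\varepsilon'$ is the gap in $x$ (at least $\varepsilon/c^{s}$ for the relevant index). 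For $\varepsilon$ small this square root dominates $\varepsilon$, since $\sqrt{\varepsilon}\gg\varepsilon$. The remaining subcase is when two turning points $C_j(\mu),C_{j+1}(\mu)$ are adjacent among the $y$'s. Then we need a uniform lower bound on $C_{j+1}(\mu)-C_j(\mu)$ over those $\mu$ whose critical values are $\varepsilon$-separated. For this we argue by compactness. By Lemma~\ref{lem:deriv-bounds}, $r_j(\mu)\ge 1+\delta$, and by Lemma~\ref{lem:rj-bounded}, $r_j(\mu)\le R$. If turning points collapsed along a sequence, the limit map would have a degenerate critical point and equal critical values. This is ruled out because the critical values differ by at least a fixed positive amount, coming from the $\varepsilon$-gaps of $x$ together with the separation of $x_k$ from $1$. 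The reason is that, by Proposition~\ref{localdifeo} and the boundary description before it, the only boundary behaviour of $\Delta_\tau$ other than $r_j\to1$ is a collision of critical values.

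\textbf{Choice of $\varepsilon_0$.} Combining the two cases, we take $\varepsilon_0$ so small that $\sqrt{2\varepsilon_0/(c\cdot c^{\max s})}\ge \varepsilon_0$, and also smaller than the compactness bound from the adjacent-turning-points subcase. The wrap-around gap between $x_k$ and $1$ is handled in the same way, treating it as the gap $[z_k,z_1+1]$.
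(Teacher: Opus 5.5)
Your two main estimates are the paper's argument, written directly rather than by contradiction. For $s(i)\ge 1$ you use the first-derivative bound together with Proposition~\ref{prop_combinatorio}. For $s(i)=0$ with a critical endpoint you use the second-derivative bound at the turning point.

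\textbf{The gap.} The step that fails is your separate treatment of two adjacent turning points $y_i=C_j(\mu)$, $y_{i+1}=C_{j+1}(\mu)$ by compactness. For each fixed $\varepsilon$, compactness only yields some $\beta(\varepsilon)>0$ with $C_{j+1}(\mu)-C_j(\mu)\ge\beta(\varepsilon)$ whenever the critical values are $\varepsilon$-separated. The set of admissible $\mu$ grows as $\varepsilon\downarrow 0$, and nothing prevents $\beta(\varepsilon)$ from decaying faster than $\varepsilon$. So ``take $\varepsilon_0$ smaller than the compactness bound'' is circular: that bound is not a fixed number but depends on the very $\varepsilon$ being chosen.

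\textbf{The fix.} The subcase needs no separate argument. Taylor's formula at $C_j$ gives $|F_\mu(y)-F_\mu(C_j)|\le \tfrac c2 (y-C_j)^2$ for every $y$, not only for nearby $y$. It uses only $F_\mu'(C_j)=0$ and the global bound $|F_\mu''|\le c$ from Lemma~\ref{simplyconnect}. Hence it applies verbatim when the other endpoint is also a turning point, giving $C_{j+1}-C_j\ge\sqrt{2\varepsilon/c^{1+\max s}}\ge\varepsilon$ once $\varepsilon\le 2/c^{1+\max s}$. This is exactly how the paper handles $s(m)=0$: it assumes only that one endpoint is critical and never asks what the other endpoint is. With this replacement your proof is complete.

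\textbf{The wrap-around gap.} Your closing remark is a point where you are more careful than the paper. As displayed, $W_\varepsilon$ imposes no lower bound on $1-x_k$, so it is not even closed. Moreover, if the only $x$-interval inside $F_\mu([y_i,y_{i+1}])$ (mod $\mathbb Z$) is $[x_k,1]$, neither estimate yields anything. The constraint $1-x_k\ge\varepsilon/c^{s(k)}$, with $s(k)$ from Proposition~\ref{prop_combinatorio} and $z_{k+1}=z_1+1$, must be written into the definition of $W_\varepsilon$. Both your argument and the paper's need it to go through.
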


\begin{proof}
We argue by contradiction. Suppose that there exists a sequence
$\varepsilon_n\to 0^+$ and points $x^n\in W_{\varepsilon_n}$ such that
$y^n := T(x^n)\notin W_{\varepsilon_n}$ for every $n$.
Write
$x^n = (x^n_1,\dots,x^n_k)$ and $y^n = (y^n_1,\dots,y^n_k).$
By the definition of $W_{\varepsilon_n}$ and of the Thurston operator,
each $x^n$ determines a map $F_{\mu_n}$, $\mu_n\in\Delta_\tau$,
whose critical values (with the appropriate type corrections) coincide
with certain coordinates of $x^n$, and where the remaining
coordinates of $y^n$ are obtained by pulling back along $F_{\mu_n}$.

Since $y^n\notin W_{\varepsilon_n}$, there exists an index
$m=m(n)\in\{1,\dots,k-1\}$ such that
\begin{equation}\label{eq:gap-x}
|y^n_{m+1}-y^n_m|
< \frac{\varepsilon_n}{c^{s(m)}}.
\end{equation}
Passing to a subsequence, we may assume that $m(n)\equiv m$ is constant.

Consider now the interval
\[
I^{n}_0 := [y^n_m, y^{n}_{m+1}] \subset [0,1).
\]

Suppose first that $s(m)=0\Rightarrow |y^n_{m+1}-y^n_m| <\varepsilon_n$. Then $y^n_m$ or $y^n_{m+1}$ is a turning point of $F_{\mu_n}$. Without loss of generality, we assume that $y^n_m$ is a turning point. Then $F_{\mu_n}'(y^n_m)=0.$ Let $r\in \{1,\ldots,k\}$ be such that the interval $[x^n_r,x^n_{r+1}]$, shifted by some integer, is contained in the interval determined by $F_{\mu_n}(y^n_m)$ and $F_{\mu_n}(y^n_{m+1})$. We may assume that $r$ does not depend on $n$. Since $x\in W_{\varepsilon_n}$, we have
$$
\begin{aligned}
0<\frac{\varepsilon_n}{c^{s(r)}} & \leq |x^n_r-x^n_{r+1}|\leq |F_{\mu_n}(y^n_m)-F_{\mu_n}(y^n_{m+1})| = |F_{\mu_n}'(\zeta_n)||y^n_m-y^n_{m+1}|\\
& \leq |F_{\mu_n}''(\hat \zeta_n)||\zeta_n - y^n_{m}|\varepsilon_n\leq c_2 \varepsilon_n | \zeta_n- y^n_m|, 
\end{aligned}
$$
where $\zeta_n\in (y^n_m,y^n_{m+1})$, $\hat \zeta_n \in (y^n_m,\zeta_n)$, and $c_2 :=\sup _{\mu_n\in \Delta_\tau, x\in \R}|F_{\mu_n}''(x)|>0$. We conclude that 
$$|\zeta_n-y^n_m| \geq \frac{1}{c_2c^{s(r)}}, \quad \forall n.$$ This contradicts the fact that $|\zeta_n - y^n_m| \leq |y^n_m-y^n_{m+1}|<\varepsilon_n\to 0$ as $n\to \infty$.

Now assume that $s(m) \geq 1$. From the construction of the Thurston operator, the image of $I^n_0$ under $F_{\mu_n}$ contains an interval of the form $[x^n_r,x^n_{r+1}]+q$ for some $r\in \{1,\ldots,k\}$ and $q\in \Z$, satisfying $s(r)=s(m)-1$, see Proposition \ref{prop_combinatorio}. Then  
$$
\begin{aligned}
    c\frac{\varepsilon_n}{c^{s(m)}}> c|y^n_m-y^n_{m+1}|\geq |x^n_r-x^n_{r+1}|\geq \frac{\varepsilon_n}{c^{s(r)}}=\frac{\varepsilon_n}{c^{s(m)-1}},
\end{aligned}
$$
a contradiction. This contradiction shows that for all sufficiently small $\varepsilon>0$,
$T(W_\varepsilon)\subset W_\varepsilon$.
\end{proof}

We can now complete the fixed–point argument.

\begin{theo}\label{thm:thurston-fixed-point}
Let $g\in\mathcal G_m$ be a post–critically finite $2m$–multimodal map
of type $\tau$, and let $T: W\to W$ be the associated Thurston
operator constructed above. Then $T$ admits a fixed point
$x^*\in W$.
\end{theo}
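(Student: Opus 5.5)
The plan is to apply Brouwer's fixed point theorem to the restriction of $T$ to a suitable $W_\varepsilon$. First, fix $\varepsilon$ with $0<\varepsilon\le\varepsilon_0$, where $\varepsilon_0$ is given by Lemma~\ref{lem:Weps-invariance}. Shrink $\varepsilon$ further if needed so that Lemma~\ref{lem:Weps-basic} applies. Then $W_\varepsilon$ is a non-empty, compact, convex subset of $W$, homeomorphic to a closed $(k-1)$-ball, and $T(W_\varepsilon)\subset W_\varepsilon$.

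Next, observe that $T$ is continuous on $W$ by the continuity proposition preceding Proposition~\ref{prop_combinatorio}. This continuity rests on Proposition~\ref{localdifeo} and Lemma~\ref{simplyconnect}, which give that $x\mapsto\mu(x)$ is continuous, together with the continuous dependence of the inverse branches of $F_\mu$ on $\mu$. Hence $T|_{W_\varepsilon}:W_\varepsilon\to W_\varepsilon$ is a continuous self-map of a compact convex set. Brouwer's theorem then yields a fixed point $x^*\in W_\varepsilon\subset W$.

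The point needing care is that $W_\varepsilon$ really sits inside the open simplex $W$, so that $T$ is defined on all of it. Since $s(i)$ is finite by Proposition~\ref{prop_combinatorio}, every gap $x_{i+1}-x_i$ is at least $\varepsilon/c^{s(i)}>0$. For the last gap $1-x_k$, which is not constrained in the definition, I would add the analogous constraint $1-x_k\ge\varepsilon/c^{s(k)}$, treating $[z_k,z_1+1]$ cyclically. The invariance argument of Lemma~\ref{lem:Weps-invariance} applies verbatim to this cyclic interval. With this constraint, $W_\varepsilon$ is closed in $\mathbb R^k$ and contained in $W$. Compactness also requires that the parameters $\mu(x)$, for $x\in W_\varepsilon$, stay in $\Delta_\tau$, so that the uniform bounds of Lemma~\ref{lem:deriv-bounds} hold along the way. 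This follows because, by Lemma~\ref{simplyconnect}, $\phi_\tau$ is a diffeomorphism onto $V_\tau$. Configurations with all gaps bounded below correspond to critical values that are not collapsing and to critical gaps bounded by $-\tau_{2j-1}\le k_j-2$, so they stay away from $\partial\Delta_\tau$.

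Finally, I would record the meaning of $x^*$. The relation $T(x^*)=x^*$ says that the marked points of $F_{\mu(x^*)}$ reproduce the combinatorics $(\sigma,t_j,\tau)$ of $g$. The main obstacle is this closedness/compactness issue, namely making sure that the fixed point does not lie on $\partial W$ and that $T$ extends continuously to $W_\varepsilon$. The cyclic-gap constraint handles it.
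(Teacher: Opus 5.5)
Your proposal is correct and follows the paper's proof: choose $0<\varepsilon\le\varepsilon_0$, use Lemmas~\ref{lem:Weps-basic} and~\ref{lem:Weps-invariance} to get a compact convex set $W_\varepsilon\subset W$ with $T(W_\varepsilon)\subset W_\varepsilon$, and apply Brouwer to the continuous map $T|_{W_\varepsilon}$.

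Your extra constraint $1-x_k\ge\varepsilon/c^{s(k)}$ is a worthwhile repair rather than a new route. As literally defined, the paper's $W_\varepsilon$ keeps the open condition $x_k<1$, so it is not closed. The invariance argument also tacitly needs a lower bound on the cyclic gap $[x_k,1]$ whenever an image interval covers only that gap. The constraint costs nothing: $z_1+1$ is a turning point, so $s(k)=0$ and the $s=0$ branch of Lemma~\ref{lem:Weps-invariance} applies verbatim. Your remark about $\mu(x)$ staying away from $\partial\Delta_\tau$, by contrast, is unnecessary, since the derivative bounds of Lemma~\ref{lem:deriv-bounds} hold on all of $\Delta_\tau$.
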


\begin{proof}
Choose $0<\varepsilon\le\varepsilon_0$ as in Lemma~\ref{lem:Weps-invariance}.
By Lemma~\ref{lem:Weps-basic}, the set $W_\varepsilon$ is a compact,
convex subset of $\mathbb R^k$ with non–empty interior, hence
homeomorphic to a closed ball of dimension $k-1$. By
Lemma~\ref{lem:Weps-invariance}, we have
$T(W_\varepsilon)\subset W_\varepsilon.
$
Since $T$ is continuous, Brouwer’s fixed point theorem applied to the
map $T|_{W_\varepsilon}: W_\varepsilon\to W_\varepsilon$ yields a
point $x^*\in W_\varepsilon$ such that $T(x^*)=x^*$. In particular,
$x^*\in W$ is a fixed point of the Thurston operator $T_\tau$.
\end{proof}

\begin{remark}
This proof of the existence of a fixed point for the Thurston operator is considerably simpler than the classical argument, which involves iterating the operator and analyzing its asymptotic behavior near the boundary. The key point is that, once we have uniform bounds on the derivatives of the family
$f_\mu$ and a combinatorial control encoded in the integers $s(i)$, we can work directly on a suitable closed simplex $W_\varepsilon$ and
invoke the standard Brouwer fixed-point theorem. 

It is well known that the Thurston operator is a contraction, and thus its iterates, starting from any initial condition, converge to the unique fixed point. This is illustrated in the example below
\end{remark}

\begin{figure}[ht]
\centering
\includegraphics[width=0.5\textwidth]{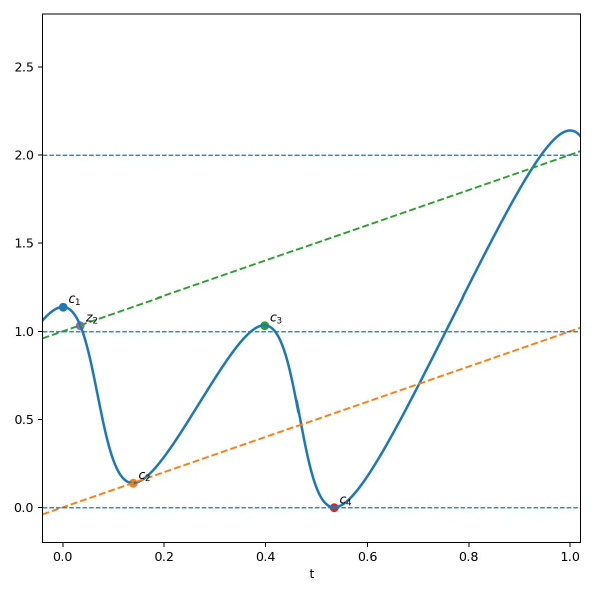}
\caption{Normalized lift associated with the fixed point of the Thurston operator in Example \ref{example_1}.}
\label{fig:thurston-fixed-point-lift}
\end{figure}

\begin{example}\label{example_1}
\emph{ We consider the combinatorics of a $4$-modal map ($m=2$), with $d=1$, and $5$ control points in the post-critical trajectories $0=z_1<z_2<z_3<z_4<z_5<1$, whose forward orbits are determined by
\[
\sigma(1)=3,\qquad \sigma(2)=2,\qquad \sigma(3)=3,\qquad \sigma(4)=2,\qquad \sigma(5)=1,
\]
with type data
\[
\tau_1=-1,\qquad \tau_2=0,\qquad \tau_3=-1.
\]
The critical points are labeled  $c_1=z_1,$ $c_2=z_3,$ $c_3=z_4$ and $c_4=z_5$, while $z_2$ corresponds to a fixed point in the forward orbit of $c_3$.
We work in the Blaschke family
\[
B_\mu(z)=e^{2\pi i\eta_0}z^7
\left(\frac{z-a_1}{1-a_1z}\right)^3
\left(\frac{z-a_2}{1-\overline a_2 z}\right)^3,
\qquad a_2=r_2e^{2\pi i\eta_2},
\]
and iterate the Thurston operator on the control points. At each step, the parameter $\mu$ is obtained by solving the corresponding normalized critical-value equations, with the lift normalized by
\[
\widetilde F(t)=F_\mu(t+C_1(\mu))-C_1(\mu),
\]
where $C_1(\mu)$ is the first local maximum. The iteration converges to a fixed configuration
\[
(z_1,z_2,z_3,z_4,z_5)\approx(0, 0.03427,\,0.13811,\,0.39748,\,0.53431).
\]
The corresponding Blaschke parameters are approximately
\[
\eta_0\equiv 0.69690 \pmod 1,\qquad
a_1\approx 1.31911,\qquad
r_2\approx 1.33310,\qquad
\eta_2\approx 0.60207,
\]
so that
$
a_2\approx 1.33310\,e^{2\pi i\,0.60207}.
$
In the corresponding normalized lift, the critical points satisfy
$
0= c_1<c_2< c_3< c_4<1,
$
with
$
\widetilde F(c_1)=z_3+1,
\widetilde F(c_2)=z_3,
\widetilde F(c_3)=z_2+1,
$ and $\widetilde F(c_4)=0.$
In particular, $z_2$ is a fixed point modulo $\mathbb{Z}$, see Figure \ref{fig:thurston-fixed-point-lift}.
}
\end{example}

\section{Proof of main theorem}

\subsection{Existence}

Let us start proving that $2m$-multimodal maps $f_{\mu \kappa}$ on the circle induced by Blaschke-type maps $B_{\mu \kappa}$ do not admit pathological intervals and thus are in $\mathcal G_m$. 
\begin{pro}\label{prop:fmukappa in G_m}
    Let $f_{\mu \kappa}=B_{\mu \kappa}|_{\mathbb S^1}, \mu \in \Delta.$ Then $f_{\mu \kappa} \in \mathcal G_m$.
\end{pro}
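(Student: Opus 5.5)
We verify the four conditions of Definition~\ref{defi_2m_modal} for $f=f_{\mu\kappa}$, $\mu\in\Delta$, exploiting that $f$ is the restriction of the rational map $B=B_{\mu\kappa}$. Our main tool is the classical Fatou--Julia theory of $B$, together with Proposition~\ref{Riemann} and Lemma~\ref{toplemma2}. The key observation is that, by symmetry $B(1/\overline z)=1/\overline{B(z)}$, and since $0$ and $\infty$ are superattracting fixed points whose critical orbits lie off $\s$, the only critical points of $B$ whose orbits can accumulate on $\s$ are the $2m$ turning points $c_1,\dots,c_{2m}\in\s$. Indeed, $a_j,1/\overline a_j$ map to $0,\infty$, and by Lemma~\ref{toplemma2} no critical point off $\s$ has critical value on $\s$.

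\textbf{Step 1 (conditions (i) and (iv)).} First we treat intervals of periodic points. If $f^p=\mathrm{id}$ on a nontrivial arc of $\s$, then $B^p=\mathrm{id}$ on $\overline{\mathbb C}$ by the identity principle. This is impossible because $\deg B^p=d_B^p>1$, since $k_0>1$ by \eqref{cond_kj}. Next, every periodic turning point $c$ is a critical point of $B$ lying on a periodic cycle, so the cycle is superattracting, and in particular attracting for $f$. This gives (iv).

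\textbf{Step 2 (condition (iii)).} Let $\mathcal O\subset\s$ be an attracting (possibly one-sided or parabolic) periodic orbit of $f$. By Fatou's theorem, the immediate basin in $\overline{\mathbb C}$ of any attracting or parabolic cycle of $B$ contains a critical point whose orbit is infinite or lands in the cycle. We will use the symmetry to argue that this critical point can be taken on $\s$. The immediate basin $U$ is invariant under inversion, because the cycle lies on $\s$. The basin components meeting $\s$ therefore intersect $\s$ in the immediate basin of $\mathcal O$ for $f$. The critical point found in $U$ cannot be $a_j$ or $1/\overline a_j$, because those are mapped to $0$ or $\infty$, which lie in different Fatou components. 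It also cannot be $0$ or $\infty$ themselves. Hence it is a turning point in $U$. We then show it lies in the component of $U\cap\s$ containing a point of $\mathcal O$. For this we use that $U$ is a symmetric simply connected domain: it is simply connected except in the superattracting case, where one uses a linearizing coordinate. Consequently $U\cap\s$ is connected and equals the real immediate basin. Hence $\mathcal O$ is essential.

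\textbf{Step 3 (condition (ii)).} This is the step I expect to be the main obstacle. Suppose $I\subset\s$ is a wandering interval. Then the iterates $f^n|_I=B^n|_I$ are injective on $I$ and the arcs $f^n(I)$ have lengths tending to zero. The approach I would try first avoids real-analytic results and uses the extension. Consider a small symmetric neighborhood $N$ of $I$. The family $\{B^n|_N\}$ omits the three values $0$, $\infty$ and a suitable third point: since $f^n(I)$ stays in $\s$, we can shrink $N$ so that $B^n(N)$ avoids neighborhoods of $0$ and $\infty$, using the Koebe principle with the real bounds $|F''|$ from Lemma~\ref{lem:deriv-bounds}. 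By Montel's theorem the family is then normal, so the interior of $I$ lies in the Fatou set. Sullivan's no-wandering-domains theorem then says that the Fatou component containing $I$ is eventually periodic. A periodic Fatou component meeting $\s$ in a wandering arc must be a rotation domain or an attracting or parabolic basin. Rotation domains are excluded because $f^n$ is not injective on $\s$ for large iterates inside a Siegel disk symmetric about $\s$ with turning points; alternatively one observes that a Herman ring or Siegel disk would force $f^p$ to be a homeomorphism of an arc, contradicting Step 1. Basins are excluded by Step 2, since points of $I$ would converge to an attracting cycle and so $I$ would not be wandering. If the Montel step proves delicate, I would instead cite the real-analytic no-wandering-intervals theorem for analytic circle maps (de~Melo--van~Strien, Martens--de~Melo--van~Strien), which applies directly since $f$ is real-analytic with non-flat critical points. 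This fallback gives the proposition immediately.
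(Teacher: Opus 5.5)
\textbf{Overall.} Steps 1 and 3 match the paper, and the core of Step 2 does too, but Step 2 adds a connectivity claim that is false in general.

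\textbf{Where you agree with the paper.} For (i), the paper appeals to analyticity; your identity principle plus $\deg B^p=d_B^p>1$ is the same argument spelled out. Condition (iv) is immediate in both. For (ii), your fallback is exactly the paper's argument: the no-wandering-intervals theorem for smooth maps with non-flat critical points. Do not rely on your primary Montel/Sullivan route there. The bounds of Lemma~\ref{lem:deriv-bounds} concern $F_\mu$ for $\mu\in\Delta_\tau$, not the iterates $B^n$. So they give no complex neighbourhood of $I$ on which every $B^n$ omits neighbourhoods of $0$ and $\infty$, and that is the whole difficulty. For (iii), your core argument is again the paper's. Fatou's theorem puts a critical point of $B$ in the complex immediate basin. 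Since $0,\infty,a_j,1/\overline a_j$ lie in the grand orbits of the basins of $0$ and $\infty$, that critical point must be a turning point.

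\textbf{The gap.} It lies in your extra step meant to place this turning point in the \emph{real} immediate basin; the paper's proof does not address this point at all. You claim $U$ is simply connected unless the cycle is superattracting, so that $U\cap\mathbb S^1$ is connected. This is false for rational maps in general. Periodic basins are either simply or infinitely connected, and the second case occurs for attracting, non-superattracting cycles on a symmetry circle. For example, take $z\mapsto (z^2+c)/(\epsilon z+1)$ with $c<-2$ and small real $\epsilon\neq0$. It is a real hyperbolic map with $\infty$ attracting of multiplier $\epsilon$, and its Julia set is a Cantor subset of the extended real line. Hence its immediate basin is the whole Fatou set and meets the real line in infinitely many intervals. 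Nothing you say rules this out for $B_{\mu\kappa}$, and ``use a linearizing coordinate'' says nothing about connectivity.

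\textbf{A fix.} You can close the point without any connectivity statement by using the symmetry inside the Koenigs coordinate.
\begin{itemize}
\item Let $p$ have period $n$ and multiplier $\lambda\in(-1,1)\setminus\{0\}$. Let $U\ni p$ be its Fatou component and $I_0$ the component of $U\cap\mathbb S^1$ containing $p$. Suppose the real immediate basin contains no turning point.
\item Then $f^n$ maps $\overline{I_0}$ homeomorphically onto itself, because its endpoints lie in $J(B)$.
\item Normalize the Koenigs coordinate $\phi$ of $B^n$ on $U$ so that $\phi(1/\bar z)=\overline{\phi(z)}$. This is possible because $U$ is symmetric and $\lambda$ is real. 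Then $\phi$ is real on $U\cap\mathbb S^1$ and maps $I_0$ diffeomorphically onto $\mathbb R$.
\item In Fatou's argument, the inverse $\psi$ of $\phi$ extends to a maximal disc $\{|w|<r\}$ with $r<\infty$. There is a critical point $z_0\in U$ of $\phi$ with $|\phi(z_0)|=r$.
\item Here $\phi(z_0)\notin\mathbb R$. Indeed, on the real diameter $\psi=(\phi|_{I_0})^{-1}$, and its values are regular points of $\phi$.
\item On the other hand, some $B^i(z_0)$ is a critical point of $B$ in the cycle of $U$, hence a turning point. So $B^{nk}(z_0)\in U\cap\mathbb S^1$ for large $k$, and $\lambda^k\phi(z_0)=\phi(B^{nk}(z_0))\in\mathbb R$. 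This is a contradiction.
\end{itemize}
The one-sided (parabolic) case should go the same way with a symmetric Fatou coordinate.
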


\begin{proof} We have to prove that $f_{\mu \kappa}$ satisfies properties (i)-(iv) in Definition \ref{defi_2m_modal}. It is immediate that $f_{\mu \kappa}$ admits no non-trivial interval of periodic points of the same period since $f_{\mu \kappa}$ is real-analytic.  Hence, (i) holds. That $f_{\mu \kappa}$ has no wandering intervals follows from the fact that any $C^2$ endomorphism of the circle with only non-flat critical points admits no wandering intervals, see \cite[Theorem A, Chapter IV]{MS}. This implies (ii). That $f_{\mu \kappa}$ has no inessential attractors follows from the general fact that the immediate basin of attraction of an attracting (or one-sided attracting) periodic point of a rational map contains at least one critical point, see \cite[Theorems 8.6 and 10.15]{Mil06}. Since $\mu \in \Delta$, any such critical point $c\in \overline{\mathbb{C}}$ of $B_{\mu \kappa}$ necessarily lies in $\mathbb S^1$, because the other critical points of $B_{\mu \kappa}$ are mapped to the fixed points $0,\infty$. Hence, $c$ must be a turning point of $f_{\mu \kappa}$. This proves (iii). Property (iv) trivially holds. \end{proof}

Let $g\in \mathcal G_m$ be a $2m$-multimodal map satisfying conditions (i)-(iv) as in Definition \ref{defi_2m_modal}. Let $d$ be the degree of $g$ as an endomorphism of $\mathbb S^1$ and let $\tau = (\tau_1,\ldots,\tau_{2m-1})$ be its type. Let $\kappa = (k_0, k_1,\ldots,k_m)$ be an $m$-tuple of positive integers satisfying \eqref{cond_kj}.  Denote by $T:W \to W$ the Thurston map associated with the type $\tau$ and the family $f_{\mu \kappa} = B_{\mu \kappa}|_{\mathbb S^1}$, $\mu\in \Delta_\tau \subset \Delta,$ of $2m$-modal maps of the circle with type $\tau$. By Theorem \ref{thm:thurston-fixed-point}, $T$ has a fixed point corresponding to a parameter $\mu_*\in \Delta_\tau$ so that  $f_{\mu_*\kappa}$ is combinatorially equivalent to $g$. In other words, denoting the turning points of $g$ and $f_{\mu_*\kappa}$ by $c_1, \ldots, c_{2m}\in \mathbb S^1$ and ${c}_1', \ldots, {c}_{2m}'\in \mathbb S^1$, respectively, there exists a bijection between the post-critical sets
\[
h: \bigcup_{i=1}^{2m} \bigcup_{n\geq 0}\{g^n(c_i)\} \to  \bigcup_{i=1}^{2m} \bigcup_{n\geq 0}\{f_{\mu_*\kappa}^n( c'_i)\},
\]
which preserves the cyclic order and satisfies $h(g^n(c_i))=f_{\mu_* \kappa}^n( c'_i)$ for every $i,n.$ Since $g$ and $f_{\mu_*\kappa}$ are combinatorially equivalent,  $h$ naturally extends to the union of all pre-images of the turning points of $g$ so that $h$ is still order preserving and conjugates $g$ and $f_{\mu_*\kappa}$ on that set. If a turning point of $g$ is periodic, then it is attracting, and $h$ extends to the basin of attraction of this attracting periodic orbit using fundamental domains both for $g$ and $f_{\mu_* \kappa}$. Since $g$ and $f_{\mu_*\kappa}$ satisfy properties (i)-(iii), $h$ is already an order preserving bijection conjugating $g$ and $f_{\mu_*\kappa}$ defined on a dense subset of $\mathbb S^1$ with dense image. Hence, $h$ continuously extends to an orientation-preserving homeomorphism $h:\mathbb S^1 \to \mathbb S^1$ conjugating $g$ and $f_{\mu_*\kappa}$. This proves the existence part of Theorem \ref{Theo_Principal}.

\subsection{Uniqueness}
We now turn to the question of {\it
uniqueness\/}. We shall prove that uniqueness holds in the case of
post-critically finite maps within the family $f_{\mu} = f_{\mu \kappa}: \mathbb S^1 \to \mathbb S^1$, with $\kappa$ fixed.
The main tool to be used in the proof of this result is the
criterion due to Thurston given by Theorem~\ref{thurston}. In order to use this criterion in our setup, we need the
description of the pre-image of the unit circle under a Blaschke-type
product $B_{\mu\kappa}$ given by Lemma~\ref{toplemma2}.

\begin{theo} Let $B_1, B_2$ be two post-critically finite
Blaschke-type products from the family $B_{\mu}=B_{\mu \kappa}$, $\mu \in \Delta$ and $\kappa$ fixed.
If the restrictions to the unit circle of these maps are topologically conjugate $2m$-multimodal maps, then, up to conjugation by a
rotation of the complex plane, $B_1$ and $B_2$ are the same.
\end{theo}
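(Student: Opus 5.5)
The plan is to construct a Thurston equivalence between $B_1$ and $B_2$ on the Riemann sphere, and then invoke Theorem~\ref{thurston}. As observed earlier, the orbifold of each $B_{\mu\kappa}$ is hyperbolic. A conformal conjugacy between $B_1$ and $B_2$ must fix $\{0,\infty\}$, since these are the only points of $\overline{\mathbb C}$ of local degree $k_0$ that are fixed with $N=\infty$. It must also preserve $\s$, and therefore it is $z\mapsto \lambda z$ with $|\lambda|=1$, that is, a rotation. So everything reduces to producing homeomorphisms $H_0,H_1$ with $H_0\circ B_1=B_2\circ H_1$ and $H_1\simeq H_0$ rel $P_{B_1}$.

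Let $h:\s\to\s$ be the topological conjugacy, $h\circ f_1=f_2\circ h$. It preserves orientation because both maps have the same type and degree and $h$ maps turning points to turning points preserving max/min; if it were reversing we would compose with complex conjugation and argue similarly. First extend $h$ to a homeomorphism $H_0$ of $\overline{\mathbb C}$ that commutes with inversion $z\mapsto 1/\overline z$, maps $\mathbb D$ to $\mathbb D$, and fixes $0$ and $\infty$. The radial (Alexander) extension will do. Next, lift $H_0$ through the branched covers using Lemma~\ref{toplemma2}. The complement $\overline{\mathbb C}\setminus B_i^{-1}(\s)$ consists of the following pieces:
\begin{itemize}
\item the two regions $\mathbb D\setminus\bigcup_j \overline{D_j}$ and its mirror image, each mapped by $B_i$ onto $\mathbb D$ (respectively onto the exterior) as a branched cover of degree $k_0$ branched only at $0$ (respectively $\infty$);
\item the $2m$ half-disks $D_j\cap\mathbb D$ and $D_j\setminus\overline{\mathbb D}$, each a degree-$k_j$ cover branched only at $1/\overline a_j$ or at $a_j$.
\end{itemize}

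On $\s$ we set $H_1=h$. On the arcs $\Gamma_j\setminus\s$ we define $H_1$ by lifting $H_0|_{\s}=h$ via $B_1$ and $B_2$, using the endpoints $c_{2j-1},c_{2j}$, which $h$ matches because the labeling of Corollary~\ref{label} is preserved. On each piece, which is a branched cover of a disk with a single branch point over the center, we lift $H_0$ so that it agrees with the prescribed boundary values. The lift exists because the boundary map has the correct degree and the branch points correspond: $0\mapsto0$, $a_j^{(1)}\mapsto a_j^{(2)}$. Doing this symmetrically yields a homeomorphism $H_1$ with $B_2\circ H_1=H_0\circ B_1$, commuting with inversion, and equal to $h$ on $\s$.

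The remaining step, which I expect to be the main obstacle, is the isotopy $H_1\simeq H_0$ rel $P_{B_1}$. Here $P_{B_1}=\{0,\infty\}\cup P_{f_1}$ with $P_{f_1}\subset\s$, and $H_0=H_1=h$ on $\s$. Restricted to the closed disk $\overline{\mathbb D}$, the map $H_1^{-1}\circ H_0$ is a homeomorphism that is the identity on $\partial\mathbb D$ and fixes $0$. By the Alexander trick it is isotopic to the identity rel $\partial\mathbb D\cup\{0\}$; one can check that the coning isotopy fixes $0$, or else reparametrize so that it does. Doing the same on the exterior via the inversion symmetry gives an isotopy rel $\s\cup\{0,\infty\}\supset P_{B_1}$. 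The delicate points are checking that the lifted $H_1$ is truly a homeomorphism on all of $\overline{\mathbb C}$, which needs the disjointness of the $\overline{D_j}$ from Lemma~\ref{toplemma2}, and making the disk isotopy fix the puncture $0$. A punctured disk with identity boundary has mapping class group generated by the boundary twist, and the number of twists is detected by the winding of the map near $\s$. This count vanishes because both $H_0$ and $H_1$ are extended from the same $h$ with the same lift normalization (the branch $\log$ of Proposition~\ref{prop:lifts}), so no twist arises. With the Thurston equivalence established, Theorem~\ref{thurston} gives a Möbius conjugacy, which by the remark above is a rotation.
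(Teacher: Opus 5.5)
Your route is the paper's: extend $h$ symmetrically to $H_0$, lift $H_0$ over $V_{B_1}$, the half-disks $D_j(B_1)\cap\mathbb D$ and their mirror images, weld along the arcs of the $\Gamma_j$, isotope rel $\mathbb S^1\cup\{0,\infty\}$, and apply Thurston rigidity. Your isotopy step is more explicit than the paper's ``by construction'', and it is easier than you expect. The coning isotopy $t\,\Phi(z/t)$ fixes $0$ whenever $\Phi(0)=0$, and the boundary twist is isotopic to the identity rel $\partial\mathbb D\cup\{0\}$, so there is no twist to count.

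\textbf{The labeling gap.} The genuine gap, which the paper shares, is your claim that $h$ matches $c_{2j-1}(B_1),c_{2j}(B_1)$ with $c_{2j-1}(B_2),c_{2j}(B_2)$ ``because the labeling of Corollary~\ref{label} is preserved''. Nothing forces this. An orientation-preserving conjugacy sends maxima to maxima, and the decreasing arcs $\mathbb S^1\cap\overline{D_j(B_1)}$ to decreasing arcs of $f_2$ in cyclic order. So you only get $h(\mathbb S^1\cap\overline{D_j(B_1)})=\mathbb S^1\cap\overline{D_{\pi(j)}(B_2)}$ for some bijection $\pi$. A homeomorphism $D_j(B_1)\cap\mathbb D\to D_{\pi(j)}(B_2)\cap\mathbb D$ lifting $H_0$ exists only if $k_{\pi(j)}=k_j$, since these are branched covers of $\overline{\mathbb C}\setminus\overline{\mathbb D}$ of degrees $k_j$ and $k_{\pi(j)}$.

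This is not cosmetic. A rotation conjugating $B_1$ to $B_2$ preserves local degrees and $|z|$, so it sends $a_j(B_1)$ to some $a_{j'}(B_2)$ with $k_{j'}=k_j$. Here is a concrete failure. Take $m=2$, $k_1\neq k_2$, and a post-critically finite $g$ with $\tau_1\neq\tau_3$ whose types relative to both maxima lie below $\kappa$. The existence part, applied with the two different base maxima, yields two maps of the family conjugate on $\mathbb S^1$ by an $h$ taking $c_1$ to $c_3$. A rotation conjugacy would have to send the positive real $a_1(B_1)$ to the positive real $a_1(B_2)$, hence be the identity. Then $h$ would be a self-conjugacy carrying $[c_1,c_2]$ onto $[c_3,c_4]$, contradicting $\tau_1\neq\tau_3$.

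The paper covers this with ``no loss of generality in assuming $h(c_j'(B_1))=c_j'(B_2)$''. What both arguments actually establish is the statement under one of two extra conditions:
\begin{itemize}
\item $h$ respects the labeling, as the conjugacies produced by the existence part do; or
\item $k_1=\cdots=k_m$, in which case the relabeled lift works and the resulting rotation sends $a_1(B_1)$ to $a_{\pi(1)}(B_2)$.
\end{itemize}

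\textbf{Orientation.} Your reason that $h$ preserves orientation is circular. An orientation-reversing conjugacy exchanges maxima and minima, so ``preserving max/min'' is equivalent to what you want to prove. The fallback ``compose with complex conjugation and argue similarly'' only shows that $B_2$ is rotation-conjugate to $z\mapsto\overline{B_1(\bar z)}$. That map belongs to the family and its circle restriction is conjugate to $f_1$ via $z\mapsto\bar z$, but it is in general not rotation-conjugate to $B_1$. So orientation preservation has to be a hypothesis, as the paper explicitly assumes.

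\textbf{Identifying the rotation.} Your identification of the M\"obius conjugacy is loose, and the paper is equally terse here. If $k_0=2$, a fixed turning point also has local degree $k_0$ and $N=\infty$. Fixing $\{0,\infty\}$ as a set still allows $z\mapsto\lambda/z$. Invariance of $\mathbb S^1$ is not argued. Use the marked form of Thurston rigidity instead: the M\"obius conjugacy $M$ is isotopic to $H_0$ rel $P_{B_1}$, hence agrees with $H_0$ on $P_{B_1}$. Therefore $M(0)=0$, $M(\infty)=\infty$ and $M(P_{B_1}\cap\mathbb S^1)\subset\mathbb S^1$, which forces $M(z)=\lambda z$ with $|\lambda|=1$.
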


\begin{proof}
Note that, as observed in Section 2, the orbifolds of $B_1$ and $B_2$ are hyperbolic. Therefore, by Thurston's criterion, all that has to be done is to show that $B_1$ and $B_2$ are combinatorially equivalent (as post-critically finite branched coverings of the Riemann sphere).

In keeping with the notation of Lemma \ref{toplemma2}, we write
$c_j'(B_1),c_j''(B_1)$ for the critical points and $D_j(B_1)$ for the
disks appearing in that lemma, emphasizing the dependence of such
objects on $B_1$, and similarly for $B_2$. Note that the post-critical
sets $P_{B_1}$ and $P_{B_2}$ are both contained in
$\mathbb S^1\cup\{0,\infty\}$.

Let $h:\mathbb S^1\to \mathbb S^1$ be the topological
conjugacy between $B_1\big{|}_{\mathbb S^1}$ and
$B_2\big{|}_{\mathbb S^1}$, which we assume
orientation-preserving. For the sake of what follows, there is no
loss of generality in assuming also that $h(c_j'(B_1))=c_j'(B_2)$ and
$h(c_j''(B_1))=c_j''(B_2)$, for all $j$. Let
$H_0:\overline{\mathbb{C}} \to \overline{\mathbb{C}}$ be an
orientation-preserving homeomorphism such that
$H_0\big{|}_{\partial \mathbb{D}}\equiv h$ and $H_0(0)=0,
H_0(\infty) = \infty$. Such an $H_0$ is easily constructed, with the
additional property of being symmetric with respect to inversion
about the unit circle

As we saw in the proof of Lemma \ref{toplemma2}, for each
$j=1, 2, \ldots, m,$ the restricted maps
$$
B_1:D_j(B_1)\cap \mathbb{D}\to \overline{\mathbb{C}}\setminus \mathbb{D}
\quad\text{and}\quad B_2:D_j(B_2)\cap \mathbb{D} \to
\overline{\mathbb{C}}\setminus \mathbb{D}
$$
are both $k_j$-to-$1$ branched covering maps, branched at
$1/ \overline{a}_j(B_1)$ and $1 / \overline{a}_j(B_2)$, respectively.
Note that all domains here are topological disks. Hence there exists
an orientation-preserving homeomorphism $\phi_j:D_j(B_1)\cap
\mathbb{D} \to D_j(B_2) \cap \mathbb{D}$ such that the diagram
\[
\begin{CD}
D_j(B_1)\cap \mathbb{D}@>{\phi_j}>>D_j(B_2) \cap \mathbb{D}\\ @V{B_1}VV             @VV{B_2}V\\ \overline{\mathbb{C}}\setminus \mathbb{D}@>>{H_0}> \overline{\mathbb{C}} \setminus \mathbb{D}
\end{CD}
\]
commutes, and such that
$\phi_j\big{|}_{D_j(B_1) \cap\partial\mathbb{D}} \equiv H_0\equiv h$.
The remaining part of the boundary of the domain of $\phi_j$ is
mapped by $B_1$ into the unit circle. Thus, in any case, we see that
for each $z$ in the boundary of the domain of $\phi_j$ we have
\begin{equation} \label{conjone}
h(B_1(z)) \;= B_2(\phi_j(z)) \ .
\end{equation}

Now let $V_{B_1}$ be the topological disk $\mathbb{D}\setminus
\bigcup_{j=1}^{m} \overline{D_j(B_1)}$, and let $V_{B_2}$ be similarly
defined. These disks are mapped (by $B_1$ and $B_2$ respectively) onto
the unit disk, and the restricted maps $B_1:V_{B_1}\to \mathbb{D}$ and
$B_2:V_{B_2}\to \mathbb{D}$ are $k_0$ to $1$ branched covering maps
branched at the origin. Hence, there exists an orientation-preserving
homeomorphism $\phi:V_{B_1} \to V_{B_2}$ such that the diagram
\[
\begin{CD}
V_{B_1}@>{\phi}>>V_{B_2}\\
@V{B_1}VV             @VV{B_2}V\\
\mathbb{D}@>>{H_0}>\mathbb{D}
\end{CD}
\]
commutes, and such that $\phi\big{|}_{\partial V_{B_1}\cap
\partial\mathbb{D}}\equiv H_0\equiv h$. As before, the remaining
part of the boundary of $V_{B_1}$ is mapped into the unit circle.
Therefore we have, for all $z\in V_{B_1}$,
\begin{equation} \label{conjtwo}
h(B_1(z))\;=\;B_2(\phi(z)) \ .
\end{equation}

Next, let us consider the union of all these homeomorphisms $\phi_j$
and $\phi$. Because \eqref{conjone} and \eqref{conjtwo} hold true,
these homeomorphisms weld together across their common boundaries
and yield an orientation-preserving homeomorphism
$\psi:\mathbb{D}\to \mathbb{D}$. Finally, extending $\psi$ by
reflection (geometric inversion) across $\partial\mathbb{D}$, we get
an orientation-preserving homeomorphism $H_1$ of the Riemann sphere.
This $H_1$ is homotopic to $H_0$ relative to $\partial\mathbb{D}\cup
\{0,\infty\}$ by construction, and moreover makes the following
diagram commute
\[
\begin{CD}
\overline{\mathbb{C}}@>{H_1}>>\overline{\mathbb{C}}\\
@V{B_1}VV             @VV{B_2}V\\
\overline{\mathbb{C}}@>>{H_0}>\overline{\mathbb{C}}
\end{CD}
\]
This proves that $B_1$ and $B_2$ are Thurston equivalent. By
Theorem \ref{thurston}, $B_1$ and $B_2$ are conformally conjugate. A
conformal conjugacy between $B_1$ and $B_2$ must fix $0$ and $\infty$
and must leave the unit circle invariant. Hence, it must be a
rotation.
\end{proof}

\hfill\newline
\noindent{\bf Acknowledgments.} EdF and EV are partially supported by the São Paulo Research Foundation (FAPESP), Brazil, Grant No. 2023/07076-4. EV is partially supported by FAPESP Grant No. 2025/08532-9. EV thanks Mitsuhiro Shishikura and Hiroyuki Inou for fruitful conversations.   PS is partially supported by the National Natural Science Foundation of China, Grant No. w2431007. EdF, EV and PS thank the support of the Shenzhen International Center for Mathematics - SUSTech. 
 
\printbibliography[title={References}]
%\bibliography{blaschke}

\end{document}